\documentclass[twoside,12pt,reqno]{amsart}
\usepackage{bbm, tikz, tikz-cd}
\usetikzlibrary{positioning, arrows}
\usepackage{amssymb}
\usepackage{fullpage}
\usepackage[margin=2cm]{geometry}
\usepackage{bm, stmaryrd}
\usepackage{extarrows}

\usepackage{graphicx}
\input{xy}
\xyoption{all}
\xyoption{curve}
\usepackage{color,soul}

\newcommand{\myrightleftarrows}[1]{\mathrel{\substack{\xrightarrow{#1} \\[-.9ex] \xleftarrow{#1}}}}
\usepackage{stmaryrd}

\newcommand{\circleover}[2][0.15ex]{%
  \leavevmode
  \vbox{\offinterlineskip
    \ialign{##\cr
      \hidewidth$\scriptscriptstyle\circ$\hidewidth\cr
      \noalign{\kern#1}
      $#2$\cr
    }%
  }%
}
\newcommand{\oM}{\circleover{M}}

\makeatletter

\newtheorem{Theorem}{Theorem}[section]
\newtheorem*{Theorem*}{Theorem}
\newtheorem{Lemma}[Theorem]{Lemma}
\newtheorem{Proposition}[Theorem]{Proposition}
\newtheorem{Corollary}[Theorem]{Corollary}

\theoremstyle{remark}
\newtheorem{Remark}[Theorem]{Remark}

\numberwithin{equation}{section}

\usepackage[pdftex,colorlinks,backref,pagebackref,hypertexnames=false, linkcolor=blue,urlcolor=blue,citecolor=blue]{hyperref}

\newcommand{\arxiv}[1]{{\tt arXiv:#1}}

\def\iso{\cong}

\def\onto{\twoheadrightarrow}
\def\isoto{\overset{\sim}{\longrightarrow}}

\def\epsilon{\varepsilon}

\def\k{{\mathbbm k}}

\def\R{\mathbb{R}}
\def\G{\mathbb{G}}

\def\rn{{\mathrm n}}
\def\rs{{\mathrm s}}

\def\lmod{\!\operatorname{-mod}}

\def\Lie{\operatorname{Lie}}

\def\ad{\operatorname{ad}}
\def\Ad{\operatorname{Ad}}
\def\Ann{\operatorname{Ann}}

\def\coker{\operatorname{coker}}

\def\Der{\operatorname{Der}}

\def\Spec{\operatorname{Spec}}

\def\Id{\operatorname{Id}}
\def\Ind{\operatorname{Ind}}

\def\End{\operatorname{End}}

\def\Hom{\operatorname{Hom}}

\def\Aut{\operatorname{Aut}}

\def\op{{\operatorname{op}}}

\def\im{\operatorname{im}}

\def\rank{\operatorname{rank}}

\def\Supp{\operatorname{Supp}}
\def\QCoh{\operatorname{QCoh}}
\def\Coh{\operatorname{Coh}}

\def\o{\overline}

\def\C{{\mathbb C}}
\def\F{{\mathbb F}}

\def\O{{\mathbb O}}
\def\cO{{\overline{\mathbb O}}}

\def\one{{\mathbbm 1}}
\def\G{{\mathbb G}}

\def\GL{\mathrm{GL}}

\def\HC{\mathrm{HC}}
\def\soc{\operatorname{soc}}

\def\b{\mathfrak b}

\def\g{{\mathfrak g}}

\def\m{\mathfrak m}
\def\n{\mathfrak n}
\def\p{\mathfrak p}

\def\t{\mathfrak t}
\def\u{\mathfrak{u}}

\def\a{\mathfrak{a}}

\def\kk{\mathfrak{k}}
\def\a{\mathfrak{a}}
\def\b{\mathfrak{b}}

\def\cA{\mathcal{A}}

\def\cC{\mathcal{C}}
\def\cF{\mathcal{F}}
\def\cG{\mathcal{G}}

\def\cK{\mathcal{K}}
\def\U{\mathcal{U}}

\def\H{\mathcal{H}}
\def\Oc{\mathcal{O}}

\newcommand{\A}{\mathcal{A}}

\newcommand{\AS}{\operatorname{AS}}
\newcommand{\fg}{\operatorname{fg}}
\newcommand{\fd}{\operatorname{fd}}
\renewcommand{\rs}{{\rm s}}
\renewcommand{\rn}{{\rm n}}

\title{\boldmath Simple Harish-Chandra bimodules in positive characteristics}

\author{Lewis Groves, Lewis Topley and Matthew Westaway}

\address{Department of Mathematical Sciences, University of Bath, Claverton Down, Bath, BA2 7AY}
\email{ldg44@bath.ac.uk}
\email{lt803@bath.ac.uk}
\email{mpwestaway@gmail.com}

\thanks{2010 {\it Mathematics Subject Classification}: 17B50, 17B35, 20G15, 14L30.}

\begin{document}

    \begin{abstract}
    Let $G$ be a standard reductive group over an algebraically closed field of positive characteristic and let $\g = \operatorname{Lie}(G)$. The category of Harish-Chandra bimodules consists of $G$-equivariant $U(\g)$-bimodules such that the adjoint $\g$-action integrates to a $G$-action. In this paper we classify the simple Harish-Chandra bimodules and determine their central characters.
    \end{abstract}
	
	\maketitle

\section{Introduction}

Let $\k$ be an algebraically closed field and let $G$ be a reductive algebraic group over $\k$ with $\g=\Lie(G)$. A Harish-Chandra bimodule 
is a $U(\g)$-$U(\g)$-bimodule, finitely generated on both sides, with a compatible $G$-action. We refer the reader to \cite{Et} for a good introduction to the topic.

The theory of Harish-Chandra bimodules has a long, rich history over fields of characteristic zero. However, over fields of positive characteristic it remained largely unexplored until some important recent developments, see \cite{BR, BR2, Lo} for example.

In this paper, we classify simple Harish-Chandra bimodules over fields of very good positive characteristics. Before getting into the details of the classification, we provide a brief survey of the history of Harish-Chandra bimodules.

\subsection{Characteristic zero: Harish-Chandra, Bernstein--Gelfand and Soergel}

The story of Harish-Chandra bimodules begins in the 1950's, in the study of semisimple Lie groups. Let $G_\R$ be a real semisimple Lie group and let $K \subseteq G_\R$ be a maximal compact subgroup. Write $\g_\R=\Lie(G_\R)$ and $\kk=\Lie(K)$. In this generality, a Harish-Chandra module for $(\g_\R\otimes_\R\C,K)$ is defined to be a $K$-equivariant $\g_\R \otimes_\R \C$-module $M$ such that the $K$-action is algebraic (i.e. locally finite), $M$ is finitely generated over $U(\g)$, and the differential of the $K$-action coincides with the $\kk \subseteq \g$-action obtained via restriction. Using this framework, Harish-Chandra was able to show that inherently analytic questions about unitary representations of $G_\R$ on Hilbert spaces could be reduced to purely algebraic problems by considering the the largest algebraic $K$-submodule, now known as the algebraic core.

The version of Harish-Chandra bimodules we consider can be seen as a special case of this construction. Let $G_\C$ be a semisimple, simply-connected complex algebraic group and set $\g_\C = \Lie G_\C$. We may view $G_\C$ as a real Lie group, in which case $\g_\C \otimes_\R \C \cong \g_\C \oplus \g_\C$, and we may chose $K$ such that $\kk$ identifies with the skew-diagonal copy of $\g_\C$ inside $\g_\C \oplus \g_\C$. The category of Harish-Chandra modules studied in the present paper is the positive characteristic analogue of modules for the pair $(\g_\C \oplus \g_\C, K)$.

One reason to specialise to this case is that a plethora of tools  from the representation theory of complex simple Lie algebras become available. In their tour de force 1980 paper \cite{BG}, Bernstein and Gelfand classified irreducible Harish-Chandra bimodules via an ingenious scheme of argument. They noticed that tensoring with a Harish-Chandra bimodule induces an endofunctor of the Bernstein--Gelfand--Gelfand category $\Oc$ for $\g_\C$, and showed that the projective indecomposable Harish-Chandra bimodules correspond precisely to the projective endofunctors of $\Oc$. By first classifying the projective endofunctors, they were able to classify the projective covers of simple modules, and thus the simple modules themselves.

Recall that the centre $Z$ of $U(\g_\C)$ identifies with $S(\t_\C)^W$, where $\t_\C$ is a Cartan subalgebra of $\g_\C$ and $W$ is the Weyl group. Central characters of $U(\g_\C)$ thus correspond to points in $\t_\C^*/W$. It is often useful to consider Harish-Chandra bimodules according to their generalised central character; indeed, \cite{BG} shows that each simple Harish-Chandra bimodule has a central character. One further major achievement of \cite{BG} is to show an equivalence of categories between Harish-Chandra bimodules with a particular regular dominant central character and a certain subcategory of category $\Oc$.

In his famous paper \cite{So}, Soergel used a variation of this argument to construct a functor from the category of Harish-Chandra bimodules with trivial generalised central character on both sides to the category of $(Z,Z)$-bimodules, with a goal of gaining a combinatorial description of Harish-Chandra bimodules (this was then generalised by Stroppel in \cite{Str} to integral generalised central characters). One of the most lasting consequences of Soergel's scheme of argument was the construction of the Soergel bimodules, which are intimately connected to Kazhdan-Lustzig polynomials and the Kazhdan-Lusztig conjecture. In fact, Harish-Chandra bimodules can be more directly tied to the Kazhdan-Lusztig conjecture through means of the so-called principal series representations (see \cite[Theorem 6.7]{BG} for more details).

\subsection{Harish-Chandra bimodules in the modular setting}
 While Soergel's construction of Soergel bimodules post-dated the proof of the Kazhdan-Lusztig conjectures by around a decade, their combinatorial nature made them natural objects of study in relation to Lusztig's conjecture, which posited a relationship between the Kazhdan-Lusztig polynomials and character formulae for simple modules of semisimple algebraic groups $G_\k$ over fields $\k$ of positive characteristic. Although Lusztig's conjecture for $G_\k\lmod$ holds true for large primes, it was disproved by Williamson in \cite{Wi}, using ideas related to Soergel bimodules. In \cite{RW} Riche and Williamson posed a new conjecture relating the character formulae instead to the so-called $p$-Kazhdan-Lusztig polynomials. These can be defined from the diagrammatic Hecke category, which in characteristic zero coincides with the category of Soergel bimodules.

 Riche and Williamson showed that their conjectural character formula in \cite{RW} would follow from a different conjecture: the existence a suitably nice action of the so-called Hecke category on the principal block of $G_\k\lmod$ \cite[Conjecture 1.1]{RW}. While Riche-Williamson's original conjecture was subsequently proved by other means \cite{AMRW}, Bezrukavnikov and Riche proved in \cite{BR} that there is indeed the desired action of the Hecke category on the principal block. Their main tool to prove this was Harish-Chandra bimodules defined over fields of positive characteristic.

For further connections between Soergel bimodules and Harish-Chandra bimodules over fields of positive characteristic, see \cite{BR2, Lo}.

 \subsection{This paper}
 \label{ss:thispaper}

 Let us now turn to the setting of the current paper. We begin by fixing our assumptions. Let $\k$ be an algebraically closed field of characteristic $p > 0$ and let $G$ be a connected, reductive algebraic group over $\k$ with simply-connected derived subgroup. We assume that $p$ is good for the root system of $G$ and that $\g = \Lie G$ admits a non-degenerate $G$-invariant bilinear form. These assumptions are known as the standard hypotheses (see Section~\ref{ss:reductivegroupsandLiealgebras}) and we call $G$ a {\it standard reductive group}.

 For the remainder of the introduction, a {\it Harish-Chandra bimodule} is a $U(\g)$-$U(\g)$-bimodule equipped with a rational $G$-action satisfying the following three conditions.
\begin{enumerate}
    \item[(HC1)] The action map $U(\g) \otimes M \otimes U(\g) \to M$ is $G$-equivariant.
    \item[(HC2)] The differential of $G \to \End(M)$ coincides with $\ad_M : \g\to \End M$ given by $\ad_M(x)(m) = xm - mx$ for $x\in \g$ and $m\in M$.
    \item[(HC3)] $M$ is finitely generated as a left $U(\g)$-module (equivalently, as a right $U(\g)$-module).
\end{enumerate}
We denote by $\HC^G U(\g)$ the category of Harish-Chandra (HC) bimodules.

Over fields of positive characteristic, the universal enveloping algebra $U(\g)$ admits a central subalgebra $Z_p(\g)$ over which it is free of finite rank. This is called the $p$-centre of $U(\g)$, and it is ($G$-equivariantly) isomorphic to $\k[\g^*]^p$. Each $M\in\HC^G U(\g)$ is thus a $(Z_p(\g),Z_p(\g))$-bimodule; however, condition (2) above means that the left and right action must coincide.

We define the $p$-support of $M$ to be its (scheme-theoretic) support when viewed as a left (or, equivalently, right) $Z_p(\g)$-module. This must be a $G$-stable, closed subscheme of the Frobenius twist $(\g^*)^{(1)}$ of the coadjoint module, and given a subscheme $X$ with these properties we define by $\HC^G_X U(\g)$ the full subcategory of $\HC^G U(\g)$ consisting of HC bimodules with $p$-support inside $X$.

It is not hard to see that the $p$-support of a simple HC bimodule is a closed $G$-orbit, and thus a $G$-orbit consisting of semisimple elements in $\g^*$ (Lemma~\ref{L:suportsemisimple}). Classifying simple HC bimodules thus reduces to classifying simple objects in $\HC^G_\O U(\g)$ as $\O$ runs over the semisimple coadjoint $G$-orbits.

\subsection{Classification of simple objects in $\HC^G U(\g)$}
\label{ss:sketchclassificationintro}

By the above, 
it suffices to classify simple objects in 
$\HC_{\O}^G U(\g)$ for every 
semisimple orbit $\O$. Fix a maximal torus $T$ of $G$, let $\t=\Lie(T)$ and let $W$ be the Weyl group of $G$. Recall that the semisimple coadjoint $G$-orbits are indexed by $\t^*/W_\bullet$, the $W$-orbits of $\t^*$ under the dot-action.

Our starting point in the classification is to note that $\HC_\O^G U(\g)$ is equivalent to the category of $G$-equivariant, finitely generated $U_\O(\g)$-modules, denoted $U_\O(\g)\lmod^G_{\fg}$. Here $U_\O(\g) := U(\g) / I_\O U(\g)$ and $I_\O \subseteq Z_p(\g)$ is the defining ideal of $\O^{(1)} \subseteq (\g^*)^{(1)}$.

In Section~\ref{ss:reductionsoverSteinbergfibres} we prove a block decomposition theorem for $U_\O(\g)$ which implies that all indecomposable objects in $\HC_\O^G U(\g)$ admit (left and right) generalised central characters for $U(\g)^G \subseteq Z(\g)$.

The algebra $U_\O(\g)$ can be localised to give a coherent sheaf of $\Oc_{\O^{(1)}}$-algebras which we denote $\U_\O(\g)$. Since $\O^{(1)}$ is affine we have an equivalence
\begin{eqnarray}
\label{eq:intro1}
U_\O(\g)\lmod^G \isoto \U_\O(\g)\lmod^G,    
\end{eqnarray}
where the right hand category consists of sheaves of $G$-equivariant $\Oc_\O$-coherent $\U_\O(\g)$-modules.

Let $\chi \in \O$ be such that $T$ is a maximal torus in $G^\chi$, let $F : \O \to \O^{(1)}$ be the Frobenius morphism and let $G^{F(\chi)}$ be the (scheme-theoretic) stabiliser of $F(\chi) \in \O^{(1)}$. We note that $G^{F(\chi)}$ is a non-reduced group scheme whenever $\chi \ne 0$. More precisely, it is a Frobenius neighbourhood in $G$ of the stabiliser $G^\chi$, first studied in \cite[\textsection 2.4]{ATW}.

One of the key steps in our proof appears in Section~\ref{S:descent}, where we apply descent for $G$-equivariant coherent sheaves. Descent implies that specialising to $F(\chi)\in \O^{(1)}$ induces an equivalence of monoidal categories $\Coh^G(\O^{(1)}) \to G^{F(\chi)}\lmod$. Since $\U_\O(\g)$ is an algebra object in $\Coh^G(\O)$ we deduce an equivalence
\begin{eqnarray}
\label{eq:intro2}
\U_\O(\g)\lmod^G \isoto U_\chi(\g)\lmod^{G^{F(\chi)}},    
\end{eqnarray}
where $U_\chi(\g)$ denotes the reduced enveloping algebra.

Next, in Section~\ref{ss:equivariantreductiontonilpotent}, we upgrade a category equivalence of Friedlander--Parshall \cite[Theorem~3.2]{FP} to the equivariant setting. More precisely, we show that the study of $G^{F(\chi)}$-equivariant $U_\chi(\g)$-modules are equivalent to restricted HC bimodules
\begin{eqnarray}
\label{eq:intro3}
U_\chi(\g)\lmod^{G^{F(\chi)}} \isoto \HC_0^{G^\chi} U(\g^\chi). 
\end{eqnarray}

Now $G^\chi$ is a Levi subalgebra of $G$ and we have reduced to the consideration of restricted HC $U(\g^\chi)$-bimodules. In this setting, we are able to classify simple modules as follows. 

Fix a Borel subgroup $B$ of $G^\chi$ containing $T$, and let $X(T)$ be the character group of $T$. Let $\Phi^\chi$ be the root system of $G^\chi$ with respect to $T$, and let $\Pi^\chi\subseteq \Phi^\chi$ be the set of simple roots in $\Phi^\chi$ corresponding to $B$. We then write $X^\chi(T)_{+}$ for the set of characters of $T$ which are dominant with respect to $\Pi^\chi$. For $\gamma\in X^\chi(T)_{+}$ we denote by $L^\chi(\gamma)$ the simple $G$-module with highest weight $\gamma$ (see \cite[II.2.4]{JanRAGS}).

For $\lambda\in X(T)/pX(T)$ we denote by $L_0^\chi(\lambda)$ the simple restricted highest weight $U(\g^\chi)$-module with highest weight $\lambda$ (see \cite[\textsection 10]{JaLA}). 
 Write $L_0^\chi(\lambda) \boxtimes L_0^\chi(\mu) \in U_0(\g^\chi) \otimes U_0(\g^\chi)\lmod$ for the exterior tensor product of simple $U(\g^\chi)$-modules, which can be extended to a $G^\chi$-module (see \eqref{eq:Frobeniuskernelandrestrictedenvelopingalgebra} and Lemma~\ref{L:liftingtoG}) and then viewed as a $G^\chi$-equivariant $U_0(\g^\chi)$-bimodule.

We also denote by $L^\chi(\gamma)^F$ denotes the pullback under the Frobenius morphism of the $G^{(1)}$-module $L^\chi(\gamma)^{(1)}$ to a $G$-module, and equip it with trivial $U_0(\g^\chi) \otimes U_0(\g^\chi)$-module structure.

For each $W \chi \in \t^* / W$ fix a representative $\chi$, and for each such $\chi$ choose a one dimensional $U_\chi(\g)\otimes U_{-\chi}(\g)$-module $E_\chi$ (see Corollary~\ref{thm: FP nilpotent reduction}). Suppose that $E_\chi$ has left and right Harish-Chandra central character $(\lambda_\chi, -\lambda_\chi)$. As we explain in Sections~\ref{ss:HCbimods} and \ref{ss:psuppHCbimods}, this module can be viewed as an object in $\HC^{G^\chi} U(\g^\chi)$.

It follows from all of the above discussion that we have an equivalence of categories $\HC_\O^G U(\g) \to \HC_0^{G^\chi} U(\g^\chi)$ and, to formulate our main theorem, we denote the quasi-inverse functor by 
\begin{eqnarray}
\label{eq:pinkapples}
    \Gamma \colon \HC_0^{G^\chi} U(\g^\chi) \isoto \HC_\O^{G} U(\g).
\end{eqnarray}
In Section~\ref{ss:centralcharactersunderparabolic} we describe the functor \eqref{eq:pinkapples} in more detail, in particular, see \eqref{eq:popcandy} and \eqref{eq:HCrestrictedtoreduced}.

\begin{Theorem}\label{T:MainThm}
    \begin{enumerate}
    \setlength{\itemsep}{6pt}
        \item If $M \in \HC^G U(\g)$ is simple then $M$ is supported on a semisimple orbit $\O$.
        \item If $\chi \in \O$ is semisimple then a complete set of non-isomorphic simple objects in $\HC_\O^G U(\g)$ is given by
        $$\Gamma \Big(\big((L_0^\chi(\mu_1) \boxtimes L_0^\chi(\mu_2)\big) \otimes L^\chi(\gamma)^F\Big),$$
        where $(\mu_1, \mu_2, \gamma)$ vary over $\mu_1, \mu_2 \in X(T)/pX(T)$ and $\gamma \in X^\chi(T)_+$.
        \item The object corresponding to the quadruple $(\chi,\mu_1,\mu_2,\gamma)$ has left Harish-Chandra central character $W_\bullet(\mu_1 + \lambda_\chi)$ and right Harish-Chandra central character $W_\bullet (\mu_2 - \lambda_\chi)$.
    \end{enumerate}

\end{Theorem}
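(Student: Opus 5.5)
Part (1) follows from Lemma~\ref{L:suportsemisimple}: the $p$-support of a simple $M$ is a single closed $G$-orbit in $(\g^*)^{(1)}$. The argument runs as follows. The $p$-centre $Z_p(\g)$ acts on $M$ by $G$-equivariant endomorphisms, and the left and right actions agree by (HC2). For any $G$-stable ideal $J\subseteq Z_p(\g)$, the subspace $JM$ is a sub-HC-bimodule, so by simplicity $JM=0$ or $JM=M$. Finite generation (HC3) and graded Nakayama arguments exclude $JM=M$ for proper $J$ containing the annihilator. Hence the support contains no proper $G$-stable closed subset. It is therefore a closed orbit, and closed coadjoint orbits are semisimple. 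Reducedness of the support also follows, because the annihilator is a $G$-stable ideal with no proper $G$-stable radical extension acting nilpotently.

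For part (2) I would compose the equivalences \eqref{eq:intro1}, \eqref{eq:intro2} and \eqref{eq:intro3}, together with $\HC^G_\O U(\g)\simeq U_\O(\g)\lmod^G_{\fg}$. This gives the equivalence $\Gamma$ of \eqref{eq:pinkapples}, and since $\Gamma$ preserves simplicity it remains to classify simples in $\HC_0^{G^\chi}U(\g^\chi)$. An object of that category is a $U_0(\g^\chi)\otimes U_0(\g^\chi)^{\op}$-module carrying a compatible $G^\chi$-action. Via the diagonal-type identification this is the same as a module for the group scheme $G^\chi_1\times G^\chi_1$ extended by $G^\chi$ acting. Equivalently, it is a representation of a group scheme $H$ that contains the normal subgroup $N=G^\chi_1\times G^\chi_1$ and has $H/N\cong (G^\chi)^{(1)}$.

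The classification would then follow the standard Clifford-theory argument for Frobenius kernels, modelled on $L(\lambda)\cong L(\lambda_0)\otimes L(\lambda_1)^{F}$ in Steinberg's tensor product theorem. Let $S$ be a simple object. Its $N$-socle is $H$-stable, so $S$ is $N$-isotypic of type $L_0^\chi(\mu_1)\boxtimes L_0^\chi(\mu_2)$. By Lemma~\ref{L:liftingtoG} this simple $N$-module extends to an $H$-module $E$. Then $\Hom_N(E,S)$ is a simple module for $H/N=(G^\chi)^{(1)}$, hence of the form $L^\chi(\gamma)^{(1)}$ with $\gamma\in X^\chi(T)_+$, and $S\cong E\otimes L^\chi(\gamma)^F$. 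Conversely, each such tensor product is simple, since $\Hom_N(E,-)$ is an equivalence from $E$-isotypic $H$-modules to $H/N$-modules. Non-isomorphism of the modules for distinct triples follows by reading off the $N$-type (which determines $\mu_1,\mu_2$ mod $p$) and the $\Hom_N$ space (which determines $\gamma$). The main obstacle is the extension step: we need an $H$-structure on $L_0^\chi(\mu_1)\boxtimes L_0^\chi(\mu_2)$ that is compatible with the bimodule structure. Here I would invoke Lemma~\ref{L:liftingtoG}, which lifts each restricted simple to a $G^\chi$-module (for instance a suitable Steinberg-type module with restricted highest weight), and take the tensor product with the dual on the right factor. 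Some care is needed because the right action involves the antipode, so $\mu_2$ should appear via the corresponding dual. The hypothesis that $G$ has simply connected derived subgroup guarantees that all restricted weights are available.

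For part (3) I would trace central characters through $\Gamma$ using \eqref{eq:popcandy} and \eqref{eq:HCrestrictedtoreduced}. The Frobenius twist $L^\chi(\gamma)^F$ carries trivial $U_0$-action, so it does not affect central characters. The factor $L_0^\chi(\mu_1)\boxtimes L_0^\chi(\mu_2)$ is tensored with $E_\chi$, whose characters are $(\lambda_\chi,-\lambda_\chi)$, and then induced from the parabolic. Under the Harish-Chandra projection for the parabolic containing $G^\chi$, a highest weight $\mu$ for $\g^\chi$ shifted by $\lambda_\chi$ becomes a highest weight for $\g$. Its $U(\g)^G$-character is therefore $W_\bullet(\mu_1+\lambda_\chi)$ on the left, and similarly $W_\bullet(\mu_2-\lambda_\chi)$ on the right. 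This requires checking that the $\rho$-shifts for $\g^\chi$ and $\g$ match up modulo the dot action; that compatibility should be the content of Section~\ref{ss:centralcharactersunderparabolic}.
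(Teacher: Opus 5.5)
Your proposal is correct and follows essentially the paper's route. Part (1) is the Nakayama argument of Lemma~\ref{L:suportsemisimple}; only the ordinary, ungraded Nakayama lemma is needed, using that $M$ is finite over $Z_p(\g)$. Part (2) is the chain of equivalences down to $\HC_0^{G^\chi}U(\g^\chi)$, followed by Clifford theory for the Frobenius neighbourhood of $\Delta(G^\chi)$ in $G^\chi\times G^\chi$, with Lemma~\ref{L:liftingtoG} supplying the extension (Theorems~\ref{T:simplesFrobeniusneighbourhood} and \ref{T:classrestsimples}). Part (3) tracks highest weights through the twist by $E_\chi$ and parabolic induction, as in Lemma~\ref{L:cclemma}.

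One caution: the paper views bimodules as $U(\g)\otimes U(\g)$-modules via $\Id\otimes\omega$, so the lift of $L_0^\chi(\mu_1)\boxtimes L_0^\chi(\mu_2)$ must be taken without dualising the right factor. Inserting a dual there would make your right central character differ from the $W_\bullet(\mu_2-\lambda_\chi)$ you state in part (3).
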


The reader should note the similarity to Steinberg's tensor product theorem, and more directly to results of \cite[\textsection 11]{Vu}, which classifies simple HC bimodules for a certain quotient of the small quantum group over $\C$ (these correspond to $\chi = 0$ in our setting). In fact, the case $\chi = 0$ of our theorem can be extracted from \cite[Theorem~5.5]{Ho}; we remark that the latter paper focuses on finite dimensional modules.

\subsection{Generalisations and future work}

The methods of the current paper are slightly more general than was needed to outline the classification of simple HC bimodules, and we would like to highlight some interesting generalities which we discovered in the course of our investigation.

If $\O \subseteq \g^*$ is an arbitrary coadjoint orbit then one may always localise to $\overline\O$, giving an equivalence similar to \eqref{eq:intro1}, and specialise at $\chi\in\O$ to give a functor
\begin{eqnarray}
\label{eq:generalequiv}
    U_{\overline{\O}}(\g)\lmod^G \to U_{\chi}(\g)\lmod^{G^{F(\chi)}}.
\end{eqnarray}
In Propositions~\ref{P:MainEquiv} and \ref{P:generalequivalencebydescent} we show that \eqref{eq:generalequiv} admits a fully faithful right adjoint, hence induces an equivalence from the quotient category
\begin{eqnarray}
    U_{\overline{\O}}(\g)\lmod^G / \mathcal{K} \isoto U_\chi(\g)\lmod^{G^{F(\chi)}},
\end{eqnarray}
where $\mathcal{K}$ is the subcategory of $U_{\overline{\O}}(\g)\lmod^G$ generated by $U_{\delta \overline\O}(\g)\lmod^G$, and $\delta \overline\O := \overline\O \setminus \O$ denotes the boundary, i.e. $\mathcal{K}$ is the smallest extension-closed, full abelian category containing $U_{\delta \overline\O}(\g)\lmod^G$ (see Lemma~\ref{L:tiptop}). Combining this equivalence with \eqref{eq:intro2}, and Premet's equivalence (see \cite[Lemma~2.2(ii)]{Pr} for example) we obtain a functor to equivariant modules over the reduced finite $W$-algebra, which is reminiscent of a theorem of Losev \cite[Theorem~1.3.1(5)]{LoFDRWA}.

In future work we will describe the relationship between bimodules over finite $W$-algebras and HC bimodules in positive characteristics, extending some of the results of Losev \cite{LoFDRWA} and Ginzburg \cite{Gi} to the modular setting. The category $\HC^G U(\g)$ appears to be a rich source of interesting problems, some elementary and some deep. 

Another result worth drawing attention to is our Theorem~\ref{T:simplesFrobeniusneighbourhood}, which classifies the simple $K$-modules where $K$ is the Frobenius neighbourhood of any algebraic subgroup scheme of a standard reductive group $G$. Our Theorem~\ref{T:MainThm} is a special case of this result, after identifying $\HC_0^G U(\g)$ with modules over an appropriate group scheme (see Lemma~\ref{L:monoidalequivalence}). In fact Lemma~\ref{L:HCequivalencewithequivariance} and Theorem~\ref{T:simplesFrobeniusneighbourhood} lead to a classification of simple restricted modules over any Harish-Chandra pair $(\g, K)$ defined over $\k$, in terms of the simple restricted $\g$-modules and simple $K$-modules. The methods of this paper will extend to classify the simple modules over a Harish-Chandra pair $(\g, K)$ when $(G,K)$ is any symmetric pair, and this will be pursued in future work.

Finally, we would like to flag our Corollary~\ref{C:semisimplesurpiseequivariance}, which is a by-product of our proof of \eqref{eq:intro3}. It states that when $\chi \in \g^*$ is semisimple, we have an equivalence of categories
\begin{eqnarray}\label{eq:surprisingcorollary}
    U_\chi(\g)\lmod^{G^{F(\chi)}} \isoto U_\chi(\g)\lmod^{G^\chi}.
\end{eqnarray}
It is surprising, in part, because \eqref{eq:surprisingcorollary} is not the forgetful functor associated with the restriction map $G^{F(\chi)}\lmod \to G^\chi\lmod$, see Remark~\ref{R:curiousequiv} for more detail.

Finally we observe that the parameterisation in Theorem~\ref{T:MainThm} depends on two choices which must be made (independently) for every element of $\t^*/W$. We expect that a choice-free parameterisation is possible, similar to \cite{BG}, and we hope to explore this in future.

\subsection*{Acknowledgements}
The second and third author are grateful for support from the UKRI Future Leaders fellowship, grant numbers MR/S032657/1, MR/S032657/2, MR/S032657/3, MR/Z000394/1. The authors are also grateful for useful discussions with Simon Riche and Trung Vu during this project. We offer special thanks to Jay Taylor who suggested the proof of Lemma~\ref{L:liftingtoG}.

\tableofcontents

\section{Preliminaries}

In this section we establish some definitions, notation and results which will be used throughout the paper. The majority of this is well-known, though in Subsection~\ref{ss:reductionsoverSteinbergfibres} we prove some new results about block decompositions of certain central quotients of universal enveloping algebras.

\subsection{Conventions}

Throughout $\k$ is an algebraically closed field of positive characteristic $p > 0$, and unadorned tensor products are taken over $\k$. All algebras, vector spaces and schemes are defined over $\k$.

If $A$ is an algebra then $A\lmod$ denotes all left $A$-modules whilst $A\lmod_{\fg}$ denotes the category of finitely generated left $A$-modules and $A\lmod_{\fd}$ denotes the finite-dimensional modules. We use similar notation for rational representations of algebraic group schemes.

If $X$ is a scheme then $\Coh(X)$ denotes the category of coherent sheaves on $X$. We also write $X^{(1)}$ for the Frobenius twisted $\k$-scheme and $F_X : X \to X^{(1)}$ for the Frobenius morphism. 

 \subsection{Monoidal categories}

 In this subsection, we recall the notion of a monoidal category and an equivalence of monoidal categories. We then observe that such an equivalence preserves the structure of an algebra and a module over that algebra. The reader may consult \cite{EGNO} for more details. All functors and categories are assumed to be $\k$-linear, so we omit this from the notation.

 A monoidal category consists of a category $\cC$, a bifunctor $\otimes:\cC\times\cC\to \cC$, a monoidal unit $\one\in\cC$, and three natural isomorphisms $a$, $l$ and $r$, which are the associator, and a left and right unit isomorphism, respectively.
 
 Such a datum is required to satisfy the pentagon axiom and two unit axioms (see \cite[Definition 2.1.1]{EGNO}). A monoidal functor between two monoidal categories $(\cC,\otimes,\one,a,l,r)$ and $(\cC',\otimes',\one',a',l',r')$ is a triple $(F,J,\phi)$ consisting of the following data: 
\begin{enumerate}
    \item A functor $F:\cC\to\cC'$;
    \item A natural isomorphism $J:F(-)\otimes' F(-)\to F(-\otimes -)$; and 
    \item An isomorphism $\phi:\one'\to F(\one)$.
\end{enumerate}
This datum is required to satisfy three axioms: the monoidal structure axiom and two axioms related to the monoidal unit (see \cite[Definition 2.4.5]{EGNO}). An equivalence of monoidal categories is a monoidal functor which is an equivalence of the underlying categories.

With data fixed as above, we might refer to $\cC$ as the monoidal category, and suppress the other notation. An {\it algebra in $\cC$} consists of a triple $(A,m,u)$ where $A\in\cC$, $m\in\Hom_\cC(A\otimes A,A)$, and $u\in\Hom_\cC(\one,A)$, such that $A$, $m$ and $u$ satisfy the usual axioms for an algebra (i.e. associativity and the left and right unitary properties).

The following result is straightforward.
 \begin{Lemma}\label{L:Falgisalg}
     Let $\cC$ and $\cC'$ be monoidal categories and let $(F,J,\phi)$ be a monoidal functor $\cC\to\cC'$. Suppose $(A,m,u)$ is an algebra in $\cC$. Then $(F(A),F(m)\circ J_{A,A},F(u)\circ\phi)$ is an algebra in $\cC'$.
 \end{Lemma}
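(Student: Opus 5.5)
We verify the three algebra axioms for $(F(A), m', u')$, where $m' := F(m)\circ J_{A,A}$ and $u' := F(u)\circ\phi$. Each axiom is obtained by applying $F$ to the corresponding axiom for $(A,m,u)$, and then using naturality of $J$ together with the coherence axioms of the monoidal functor $(F,J,\phi)$ to move the resulting diagram into $\cC'$. Throughout, we suppress the associators $a$ and $a'$ only where the monoidal structure axiom of $(F,J,\phi)$ explicitly accounts for them.

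\emph{Associativity.} We must show
$$m'\circ(m'\otimes'\Id_{F(A)}) = m'\circ(\Id_{F(A)}\otimes' m')\circ a'_{F(A),F(A),F(A)}$$
as maps $(F(A)\otimes' F(A))\otimes' F(A)\to F(A)$. Expanding the left-hand side gives
$$F(m)\circ J_{A,A}\circ(F(m)\otimes'\Id)\circ(J_{A,A}\otimes'\Id).$$
By naturality of $J$ in its first variable, $J_{A,A}\circ(F(m)\otimes'\Id)=F(m\otimes\Id)\circ J_{A\otimes A,A}$, so the left-hand side becomes
$$F\bigl(m\circ(m\otimes\Id)\bigr)\circ J_{A\otimes A,A}\circ(J_{A,A}\otimes'\Id).$$
In the same way, the right-hand side becomes
$$F\bigl(m\circ(\Id\otimes m)\bigr)\circ J_{A,A\otimes A}\circ(\Id\otimes' J_{A,A})\circ a'.$$
The monoidal structure axiom of \cite[Definition 2.4.5]{EGNO} states
$$J_{A,A\otimes A}\circ(\Id\otimes' J_{A,A})\circ a' = F(a_{A,A,A})\circ J_{A\otimes A,A}\circ(J_{A,A}\otimes'\Id).$$
Substituting this, the right-hand side equals
$$F\bigl(m\circ(\Id\otimes m)\circ a\bigr)\circ J_{A\otimes A,A}\circ(J_{A,A}\otimes'\Id).$$
By associativity of $A$ in $\cC$, this agrees with the left-hand side.

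\emph{Left unit.} We need $m'\circ(u'\otimes'\Id)=l'_{F(A)}$. Expanding and applying naturality of $J$ in the first variable,
$$F(m)\circ J_{A,A}\circ(F(u)\otimes'\Id)\circ(\phi\otimes'\Id) = F\bigl(m\circ(u\otimes\Id)\bigr)\circ J_{\one,A}\circ(\phi\otimes'\Id).$$
The unit axiom of $A$ gives $m\circ(u\otimes\Id)=l_A$. The left unit axiom for monoidal functors gives
$$F(l_A)\circ J_{\one,A}\circ(\phi\otimes'\Id)=l'_{F(A)},$$
which completes this case.

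\emph{Right unit.} This is symmetric to the left unit case: use naturality of $J$ in the second variable, then the right unit axiom $F(r_A)\circ J_{A,\one}\circ(\Id\otimes'\phi)=r'_{F(A)}$.

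\emph{Obstacle.} None of these steps presents a real obstacle. The only point requiring care is the associativity check, where one must keep the associator conventions of \cite{EGNO} straight so that the monoidal structure axiom is applied in the correct orientation.
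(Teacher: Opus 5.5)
Your proposal is correct and matches the paper's approach. The paper gives no written proof and simply calls the lemma straightforward; the intended argument is the direct verification you give. You apply $F$ to the algebra axioms, move $F(m)$ and $F(u)$ across $J$ by naturality, and then invoke the associativity and unit axioms of $(F,J,\phi)$. You also use the monoidal structure axiom in the correct orientation.
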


 Given an algebra $A$ in $\cC$, the category of {\it $A$-modules in $\cC$}  is denoted $A\lmod_\cC$, and defined as follows. Objects consist of pairs $(M,\rho)$, where $M$ is an object in $\cC$ and $\rho\in\Hom_\cC(A\otimes M,M)$, such that the following diagrams commute
\begin{eqnarray}
\begin{array}{c}\xymatrix{
     A\otimes (A\otimes M)\ar@{->}[rrr]^{\Id\otimes\rho}  \ar@{->}[d]^{a_{A,A,M}} & & & A\otimes M \ar@{->}[d]_{\rho} & \one\otimes M \ar@{->}[rr]^{u\otimes \Id} \ar@{->}[rrd]^{l_{\one,M}} & &  A\otimes M \ar@{->}[d]^{\rho}
     \\
     (A\otimes A)\otimes M \ar@{->}[rr]^{m\otimes \Id} & & A\otimes M \ar@{->}[r]^{\rho} & M & & & M
    }
\end{array}
\end{eqnarray}
 A morphism $(M,\rho)\to (M',\rho')$ in $A\lmod_\cC$ is then  defined to be a morphism $\Phi:M\to M'$ in $\cC$ such that the following diagram commutes
\begin{eqnarray}
\begin{array}{c}\xymatrix{
    A\otimes M \ar@{->}[rr]^{\rho}  \ar@{->}[d]^{\Id\otimes\Phi} & & M \ar@{->}[d]_{\Phi}
     \\
     A\otimes M' \ar@{->}[rr]^{\rho'} & & M'
    }
\end{array}
  \end{eqnarray}

The following lemma is also straightforward.
 \begin{Lemma}\label{L:Fmodismod}
     Let $\cC$ and $\cC'$ be monoidal categories and let $(F,J,\phi)$ be a monoidal functor $\cC\to\cC'$. Suppose $A=(A,m,u)$ is an algebra in $\cC$, and that $(M,\rho)$ is an $A$-module in $\cC$. The following are true.
     \begin{enumerate}
         \item $(F(M),F(\rho)\circ J_{A,M})\in F(A)\lmod_{\cC'}$.
         \item The map $(M,\rho)\mapsto (F(M),F(\rho)\circ J_{A,M})$ defines a functor $A\lmod_{\cC}\to F(A)\lmod_{\cC'}$.
         \item If $F$ is an equivalence of categories, then $A\lmod_{\cC}\to F(A)\lmod_{\cC'}$ is also an equivalence of categories.
     \end{enumerate}
 \end{Lemma}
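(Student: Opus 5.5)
We verify the three parts of Lemma~\ref{L:Fmodismod} in order, using only the coherence axioms of the monoidal functor $(F,J,\phi)$ together with Lemma~\ref{L:Falgisalg}. By that lemma, $F(A)$ is an algebra in $\cC'$ with multiplication $m' := F(m)\circ J_{A,A}$ and unit $u' := F(u)\circ\phi$. Throughout we write $\rho' := F(\rho)\circ J_{A,M}$.

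For (1) we check the two module diagrams for $(F(M),\rho')$.

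For associativity, we must show
\[
\rho'\circ(\Id\otimes'\rho')
= \rho'\circ(m'\otimes'\Id)\circ a'_{F(A),F(A),F(M)}
\]
as maps $F(A)\otimes'(F(A)\otimes' F(M))\to F(M)$. Expand both sides and use naturality of $J$ to move $F(\rho)$ and $F(m)$ past $J$. For example,
\[
J_{A,M}\circ(\Id\otimes' F(\rho)) = F(\Id\otimes\rho)\circ J_{A,A\otimes M}.
\]
After this, the left side becomes
\[
F\big(\rho\circ(\Id\otimes\rho)\big)\circ J_{A,A\otimes M}\circ(\Id\otimes' J_{A,M}).
\]
The right side becomes
\[
F\big(\rho\circ(m\otimes\Id)\big)\circ J_{A\otimes A,M}\circ(J_{A,A}\otimes'\Id)\circ a'.
\]
The monoidal structure axiom (the hexagon) for $(F,J)$ states that
\[
F(a_{A,A,M})\circ J_{A,A\otimes M}\circ(\Id\otimes' J_{A,M})
= J_{A\otimes A,M}\circ(J_{A,A}\otimes'\Id)\circ a'.
\]
Applying $F$ to the associativity diagram for $\rho$ then closes the argument.

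The unit diagram is handled the same way. We use naturality of $J$ with respect to $u$, followed by the unit axiom for $F$, which relates $l'$, $\phi$, $J_{\one,M}$ and $F(l_{\one,M})$. Applying $F$ to the unit diagram for $\rho$ finishes it.

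For (2), let $\Phi$ be a morphism of $A$-modules. Then $F(\Phi)$ is a morphism of $F(A)$-modules:
\[
F(\Phi)\circ F(\rho)\circ J_{A,M}
= F(\rho_2)\circ F(\Id\otimes\Phi)\circ J_{A,M}
= F(\rho_2)\circ J_{A,M'}\circ(\Id\otimes' F(\Phi)),
\]
where $\rho_2$ is the action on $M'$ and the second equality is naturality of $J$. Functoriality is inherited from $F$.

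For (3), we show the induced functor $\tilde F$ is fully faithful and essentially surjective.

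Faithfulness is immediate from faithfulness of $F$. For fullness, take a morphism $\Psi\colon F(M)\to F(M')$ of $F(A)$-modules and write $\Psi = F(\Phi)$. Reversing the computation in (2), the condition on $\Psi$ reads
\[
F\big(\Phi\circ\rho\big)\circ J_{A,M} = F\big(\rho_2\circ(\Id\otimes\Phi)\big)\circ J_{A,M}.
\]
Since $J_{A,M}$ is an isomorphism and $F$ is faithful, $\Phi$ is an $A$-module map.

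Essential surjectivity is the main step. Take $(N,\sigma)\in F(A)\lmod_{\cC'}$ and fix an isomorphism $\theta\colon F(M)\to N$. Set
\[
\tilde\sigma := \theta^{-1}\circ\sigma\circ(\Id\otimes'\theta)\circ J_{A,M}^{-1}
\colon F(A\otimes M)\to F(M),
\]
and use fullness of $F$ to write $\tilde\sigma = F(\rho)$ for a unique $\rho\colon A\otimes M\to M$. By construction, $(F(M),F(\rho)\circ J_{A,M})$ is isomorphic to $(N,\sigma)$ via $\theta$. Transporting the module axioms along $\theta$ shows that $(F(M),F(\rho)\circ J_{A,M})$ satisfies them. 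Running the computation of (1) backwards shows that $F$ applied to the associativity and unit diagrams for $\rho$ commutes. Since $F$ is faithful, those diagrams commute in $\cC$, so $(M,\rho)\in A\lmod_\cC$ and $\tilde F(M,\rho)\cong(N,\sigma)$.

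The only real obstacle is bookkeeping in the associativity check; it is entirely controlled by the hexagon axiom for $J$ and naturality.
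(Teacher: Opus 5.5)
Your proof is correct. For (1) and (2) it is exactly the routine check the paper calls straightforward: naturality of $J$ together with the monoidal structure and unit axioms.

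For (3) you argue differently from the paper. The paper uses \cite[Remark~2.4.10]{EGNO} to make a quasi-inverse $F^{-1}$ (chosen to be both a left and a right adjoint) into a monoidal functor whose unit and counit are monoidal isomorphisms. It then applies (1)--(2) to $F^{-1}$ and transports along the algebra isomorphism $A\isoto F^{-1}F(A)$, which yields an explicit quasi-inverse $F(A)\lmod_{\cC'}\to A\lmod_{\cC}$.

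You instead verify directly that the induced functor is faithful, full and essentially surjective. This uses only the corresponding properties of $F$ and the invertibility of $J$ and $\phi$. The key point, which you identify correctly, is that the identities derived in (1) never use the module axioms for $\rho$. So once the transported action $F(\rho)\circ J_{A,M}$ satisfies the axioms, you can cancel the isomorphisms $J_{A,A\otimes M}\circ(\Id\otimes' J_{A,M})$ and $J_{\one,M}\circ(\phi\otimes'\Id)$ and then use faithfulness of $F$.

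Your route is more elementary and self-contained. It avoids building the monoidal structure on $F^{-1}$ and checking that the unit and counit are monoidal, which the paper delegates to EGNO. It also shows that full faithfulness of $F$ alone already gives full faithfulness on module categories.

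In exchange, the paper's route gives a concrete formula for the quasi-inverse: its underlying object is $F^{-1}(N)$. This is what the paper uses later to describe the inverse of the specialisation functor in \eqref{eq:popcandy}. With your argument this description is recovered only a posteriori, by noting that any preimage of $(N,\sigma)$ has underlying object isomorphic to $F^{-1}(N)$.
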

 
 For part (3) of this lemma, one needs to use the observation in \cite[Remark 2.4.10]{EGNO} that for a monoidal functor $F$ which is an equivalence of categories, any quasi-inverse $F^{-1}$ can be given the structure of a monoidal functor such that the natural isomorphisms $F\circ F^{-1}\isoto \Id$ and $\Id\isoto F^{-1} \circ F$ are isomorphisms of monoidal functors (see \cite[Definition 2.4.8]{EGNO}).
 
 Furthermore, we may additionally assume that $F^{-1}$ is a left and right adjoint to $F$. Then $F^{-1}$ induces a functor $F(A)\lmod_{\cC'}\to F^{-1} \circ F(A)\lmod_{\cC}$, and using the isomorphism $A\isoto F^{-1}\circ F(A)$ we obtain a functor $F(A)\lmod_{\cC'}\to A\lmod_{\cC}$. It is straightforward to check that this is a quasi-inverse to the functor $A\lmod_{\cC}\to F(A)\lmod_{\cC'}$.

\subsection{Restricted Lie algebras and reductive groups}
\label{ss:reductivegroupsandLiealgebras}

Recall that a Lie algebra $\g$ is called restricted if it is equipped with a $p$-mapping $x \mapsto x^{[p]}$ such that the map $\g \to U(\g)$ defined by $x\mapsto  x^p - x^{[p]}$ induces a homomorphism of algebras $S(\g^{(1)}) \to Z(\g)$, where $Z(\g)$ is the centre of $U(\g)$. By the PBW theorem, this map is always injective. The image is called the $p$-centre $Z_p(\g)$. By construction we have $\Spec Z_p(\g) = (\g^*)^{(1)}$. For $\chi\in\g^*$, the reduced enveloping algebra corresponding to $\chi$ is defined to be $U_\chi(\g):=U(\g)/I_\chi U(\g)$ where $I_\chi \unlhd Z_p(\g)$ is the ideal generated by $\{ x^p-x^{[p]}-\chi(x)^p\mid x\in\g\}$. More generally, if $X$ is a closed subscheme of $(\g^*)^{(1)}$ then we write $U_X(\g):=U(\g)/I_X U(\g)$, where $I_X$ is the defining ideal of $X$ in $Z_p(\g)$. The restricted enveloping algebra is $U_0(\g)$ and $U_0(\g)$-modules are called restricted $U(\g)$-modules.

If $A$ is an associative algebra then the Lie algebra $\Der A$ of derivations of $A$ is restricted, with $p$-mapping $\delta \mapsto \delta^p$. If $G \subseteq \Aut A$ is a group of automorphisms then the set of derivations that commute with $G$ is a restricted subalgebra of $\Der A$. Therefore, if $G$ is a group scheme then the Lie algebra $\g = \Lie G$ is restricted, and the $p$-mapping is $G$-equivariant. The latter implies that the identification $\Spec Z_p(\g) = (\g^*)^{(1)}$ is $G$-equivariant.

Let $G$ now be a reductive group (which we generally view as a group scheme), and assume the standard hypotheses from \cite[\textsection 6.3]{JaLA}. In particular:
\begin{itemize}
    \item[(H1)] the derived subgroup of $G$ is simply-connected;
    \item[(H2)] $p$ is good for the root system of $G$;
    \item[(H3)] the adjoint $G$-module is self-dual.
\end{itemize}
We call such a group {\it standard reductive}.

 Choose a maximal torus $T \subseteq G$ with Lie algebra $\t \subseteq \g$. Fixing a choice of positive roots $\Phi^+ \subseteq \Phi \subseteq X^*(T)$, where $X^*(T)$ is the character group of $T$, we obtain a triangular decomposition $\g = \n^- \oplus \t \oplus \n$, and Borel subalgebra $\b = \t \oplus \n$ corresponding to a Borel subgroup $B \subseteq G$. Write $\rho \subseteq \t^*$ for the sum of the fundamental weights.

 The Weyl group $W = N_G(T) / T$ acts on $\t^*$ naturally $(w, \lambda) \mapsto w\lambda$, and also acts via the $\rho$-shifted ``dot action'' $(w, \lambda) \mapsto w \bullet \lambda := w(\lambda + \rho) - \rho$.

 A Levi subgroup of $G$ is the Levi factor of a parabolic subgroup. When $G$ is standard reductive, every Levi subgroup is also connected and standard reductive. 

 Hypothesis (H3) implies that there exists a $G$-isomorphism $\g \to \g^*$ and using this we can define the notion of semisimple and nilpotent elements of $\g^*$, as well as the Jordan decomposition for arbitrary $\chi \in \g^*$. The following well-known fact will be crucial for our deductions, and it follows from \cite[Proposition~7.13]{JaNO}.
\begin{Lemma}
\label{L:Zariskiclosedsemisimple}
     The Zariski closed orbits in $\g^*$ are precisely the orbits of semisimple elements. Furthermore the centralisers in $G$ of semisimple elements are precisely the Levi subgroups. 
\end{Lemma}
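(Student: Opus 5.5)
The plan is to transport the statement from $\g^*$ to $\g$ using the $G$-isomorphism $\g \isoto \g^*$ provided by (H3). By definition this isomorphism matches semisimple elements, Jordan decompositions and centralisers on the two sides, so it suffices to prove: (a) the closed $G$-orbits in $\g$ are exactly the orbits of semisimple elements; (b) the centralisers $G^x$ of semisimple $x \in \g$ are exactly the Levi subgroups of $G$. Throughout we may conjugate so that a semisimple element lies in $\t$.

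For (a), let $x = x_s + x_n$ be the Jordan decomposition. Then $x_n$ is a nilpotent element of $\Lie(G^{x_s})$. By the Hilbert--Mumford criterion, or explicitly by an associated cocharacter of $G^{x_s}$ (available since $p$ is good), there is a cocharacter $\tau$ of $G^{x_s}$ with $\lim_{t\to 0}\tau(t)\cdot x_n = 0$. Since $\tau$ fixes $x_s$, we get $x_s \in \overline{G\cdot x}$. Hence if $G\cdot x$ is closed, then $x_s \in G\cdot x$, and comparing Jordan decompositions gives $x = x_s$.

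Conversely, suppose $x\in\t$. Under the standard hypotheses the Chevalley restriction theorem gives $\k[\g]^G \iso \k[\t]^W$, so the quotient map $\g \to \t/W$ separates $W$-orbits in $\t$. The fibre through $x$ contains a unique closed orbit, and by the first half this orbit is $G\cdot y$ for some $y\in\t$ with the same image as $x$. Then $y\in W x$, so $G\cdot x = G\cdot y$ is closed.

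For (b), take $x\in\t$. Under (H1) and (H2) the centraliser $G^x$ is connected and reductive, generated by $T$ and the root subgroups $U_\alpha$ for $\alpha$ in $\Phi_x := \{\alpha\in\Phi : d\alpha(x) = 0\}$ (Steinberg's theorem in its Lie algebra form). We must show that $\Phi_x$ is a Levi subsystem, that is, $\Phi_x = \Phi\cap \mathbb{Q}\Phi_x$. We have $\Phi_x = \Phi\cap\ker(\phi)$, where $\phi: \mathbb{Z}\Phi\to\k$, $\alpha \mapsto d\alpha(x)$, factors through $\mathbb{Z}\Phi\otimes\mathbb{F}_p$. So the danger is a root $\beta$ with $m\beta \in \mathbb{Z}\Phi_x$ for some $m$ divisible by $p$, but $\beta\notin\mathbb{Z}\Phi_x$. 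Goodness of $p$ is exactly what rules this out: the coefficients of roots relative to bases of subsystems involve only bad primes, so $\mathbb{Z}\Phi/\mathbb{Z}\Phi_x$ has no $p$-torsion along $\Phi$. Choosing a base of $\Phi$ compatible with $\Phi_x$ then shows that $G^x$ is the Levi factor of a standard parabolic subgroup, up to conjugacy.

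For the converse, let $L$ be a Levi subgroup. Then $L = C_G(Z(L)^\circ)$, and we choose $x\in\Lie(Z(L)^\circ)$ generic, avoiding the finitely many hyperplanes $\ker d\alpha|_{\Lie Z(L)^\circ}$ for $\alpha\notin\Phi_L$. For such $x$ we get $G^x = L$. This needs those restricted differentials to be nonzero, which follows from (H1) and (H3): the weight lattice is large enough, and $p$ does not divide the relevant pairings.

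I expect the main obstacle to be the root-system step in (b), in both directions: that $\Phi_x$ is $\mathbb{Q}$-closed, and that $d\alpha|_{\Lie Z(L)^\circ}\neq 0$ for $\alpha\notin\Phi_L$. This is where all three standard hypotheses genuinely enter. In practice I would cite \cite[Proposition~7.13]{JaNO} together with the connectedness of centralisers from \cite{JaNO} rather than redoing the case analysis of bad primes.
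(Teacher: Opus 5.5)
Your argument is correct, but it takes a different route from the paper. The paper gives no argument at all: it records the statement as well known and cites \cite[Proposition~7.13]{JaNO}, which is the same reference you say you would fall back on.

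You instead reconstruct the result from four ingredients:
\begin{itemize}
\item a cocharacter of $G^{x_s}$ contracting $x_n$, which shows that closed orbits consist of semisimple elements;
\item Chevalley restriction together with uniqueness of the closed orbit in each fibre of $\g\to\g/\!/G$, which gives the converse;
\item Steinberg's connectedness theorem (via (H1)) together with the absence of $p$-torsion in $\mathbb{Z}\Phi/\mathbb{Z}\Phi_x$ for good $p$, which shows that $G^x$ is a Levi subgroup;
\item a generic element of $\Lie Z(L)^\circ$, which realises each Levi subgroup $L$ as a centraliser.
\end{itemize}
The citation buys brevity. Your route makes visible where each standard hypothesis is actually used.

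The one step you leave loose is the last, and it is genuinely where (H3) matters. For $SL_2$ with $p=2$ the torus $\t$ is central in $\mathfrak{sl}_2$, so $T$ is not the centraliser of any semisimple element. Your justification ("the weight lattice is large enough, and $p$ does not divide the relevant pairings") can be made precise as follows.
\begin{enumerate}
\item Conjugate $L$ to a standard Levi subgroup with simple roots $\Delta_L\subseteq\Delta$.
\item With $\kappa$ as in (H3), weight considerations show that $\kappa|_\t$ is non-degenerate and that $\kappa(e_\alpha,e_{-\alpha})\neq 0$.
\item Hence $d\alpha=\kappa(e_\alpha,e_{-\alpha})^{-1}\kappa(h_\alpha,\cdot)$ on $\t$, where $h_\alpha=d\alpha^\vee(1)$.
\item Counting dimensions, $\Lie(Z(L)^\circ)^\perp\cap\t$ is spanned by the $h_\beta$ with $\beta\in\Delta_L$.
\item By (H1) the $h_\gamma$ with $\gamma\in\Delta$ are linearly independent. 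So $d\alpha$ vanishes on $\Lie Z(L)^\circ$ only if every coefficient of $\alpha^\vee$ on the simple coroots outside $\Delta_L$ is divisible by $p$.
\item For $\alpha\notin\Phi_L$ this is excluded by (H2), since nonzero coefficients of coroots are smaller than any good prime.
\end{enumerate}
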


\subsection{The Harish-Chandra centre}\label{ss:HCcentre}
A remarkable achievement of Harish-Chandra was his determination of the centre of the enveloping algebra of a complex semisimple Lie algebra, building on the work of Chevalley who described the symmetric invariants. His ideas continue to pervade over fields of positive characteristic, as we now explain.

  The invariants in $\k[\t^*] = S(\t)$ under the dot action of $W$ are denoted $S(\t)^{W_\bullet}$, whilst the corresponding quotient is written $\t^* \to \t^* / W_{\bullet}$. The Chevalley restriction theorem \cite[Proposition 7.12]{JanNO} states that the restriction map $\k[\g] \to \k[\t]$ and the identifications $\g\isoto\g^*$ and $\t\isoto \t^*$ induce an isomorphism
  \begin{eqnarray}
  \label{eq:Chevalleyrestriction}
       \t^* / W \isoto \g^* /\!/ G.
  \end{eqnarray}
  
  The Harish-Chandra (HC) homomorphism is an embedding $U(\g)^G \to U(\t)$ that induces a natural isomorphism 
  \begin{eqnarray}
  \label{eq:HCrestriction}
  U(\g)^G \cong \k[\t^*]^{W_\bullet}.    
  \end{eqnarray}
  If we consider the subspace $I = U(\g)\n^+ \cap \n^- U(\g)$ then Harish-Chandra observed that the $T$-invariants $I^T$ form an ideal in $U(\g)^T$, and the quotient induces $U(\g)^T / I^T \cong U(\t)$. The HC homomorphism is the composition $U(\g)^G \subseteq U(\g)^T \onto U(\t) = S(\t)$.

If $M \in \g\lmod$ then we say that $M$ admits a Harish-Chandra (HC) central character if the map $U(\g)^G \to \End(M)$ factors through a character (i.e. an algebra homomorphism) $\psi : U(\g)^G \to \k$. We identify the set of HC central characters with $\t^* / W_\bullet$ via \eqref{eq:HCrestriction}. If $M$ is a highest weight $\g$-module generated by a vector of weight $\lambda \in \t^*$, then the central character is $W_\bullet\lambda \in \t^*/W_\bullet$.

 Since $U(\g)$ is a rational $G$-module under the adjoint action and $G$ is connected, we have $U(\g)^G \subseteq U(\g)^\g = Z(\g)$.  It follows that the centre $Z(\g) \subseteq U(\g)$ contains $U(\g)^G$ and $Z_p(\g)$. Veldkamp proved that these two subalgebras generate the centre. Although his proof assumed that $G$ is simple and $p$ is larger than the Coxeter number, the theorem continues to hold under the standard hypotheses of Section~\ref{ss:reductivegroupsandLiealgebras}, see \cite[\textsection 9]{JaLA}.

In fact $Z(\g) = Z_p(\g) \otimes_{Z_p(\g)^G} U(\g)^G$, and this leads to a fibre product description
\begin{eqnarray}
\label{eq:centreUEA}
\Spec Z(\g) = (\g^*)^{(1)} \times_{(\t^* / W)^{(1)}} \t^* /W_\bullet
\end{eqnarray}
Here the map $(\g^*)^{(1)} \to (\g^*/\!/ G)^{(1)} = (\t^* / W)^{(1)}$ is the coadjoint quotient map followed by the identification \eqref{eq:Chevalleyrestriction}. The map 
\begin{eqnarray}
    \label{eq:AS}
    \AS : \t^* / W_\bullet \to (\t^* / W)^{(1)}
\end{eqnarray}
is referred to as the Artin-Schreier (AS) map, and is dual to the inclusion 
\begin{eqnarray}
\label{eq:ArtinSchreier}
 \k[(\t^*)^{(1)}]^W = \k[(\g^*)^{(1)}]^G = Z_p(\g)^G \subseteq U(\g)^G = \k[\t^*]^{W_\bullet}.   
\end{eqnarray}
The extremal equalities in \eqref{eq:ArtinSchreier} are obtained by identifying through the Chevalley restriction \eqref{eq:Chevalleyrestriction} and Harish-Chandra restriction theorem \eqref{eq:HCrestriction}, respectively.

In fact, it was observed in \cite[\textsection 2.3]{BMR} that $\AS$ can be described slightly more directly. Precisely, the AS map $S(\t) \to S(\t)^{(1)} = \k[(\t^*)^{(1)}]$ is determined by $t \mapsto t^p - t^{[p]}$ for $t \in \t$. It is not hard to check that this induces a homomorphism $S(\t^{(1)})^W \to S(\t)^{W_\bullet}$, and this corresponds to the map $Z_p(\g)^G \to U(\g)^G$ after appropriate identifications.

\subsection{Generalised central characters for reductions over Steinberg fibres}
\label{ss:reductionsoverSteinbergfibres}

The coadjoint quotient map for $G \curvearrowright \g^*$ is often called the Steinberg map \cite[7.2]{JaNO}. Here we study uniform properties of modules supported on the fibres of the Steinberg map.

Let 
\begin{eqnarray}
    \pi : (\g^*)^{(1)} \to (\g^*)^{(1)} /\!/ G = (\t^* / W)^{(1)}
\end{eqnarray}
denote the Frobenius twisted Steinberg map.
Throughout this section we fix some $[\lambda] \in (\t^* / W)^{(1)}$ and write $X$ for the fibre $\pi^{-1} [\lambda]$. Recall that we write $U_X (\g)$ for the quotient $U(\g) / I_X U(\g)$ where $I_X$ is the defining ideal of $X$ in $Z_p(\g)$. Given $\psi\in\Hom(U(\g)^G,\k)$, we furthermore write $U_X^{(\psi)}(\g)$ for the quotient $U_X (\g) / (\ker \psi)^d U_X (\g)$ where $d = p^{\dim \g}$.

The first lemma we need is a description of the closed points in $\AS^{-1}[\lambda]$, where $\AS$ is as defined in Subsection~\ref{ss:HCcentre}. To state it, for each $\eta\in(\g^*)^{(1)}$ we define $Z_\eta(\g)$ to be the image of $Z(\g)$ under the natural map $U(\g)\to U_\eta(\g)$. Note that this need not coincide with the centre of $U_\eta(\g)$; see, for example, \cite[Example 3.19]{Brown Gordon}. Note also that the set of closed points of $\AS^{-1}[\lambda]$ is a finite set.

\begin{Lemma} \label{L: artin schreier preimage}
    For all $\eta \in X$, we have $\AS^{-1}[\lambda] = \Spec Z_\eta(\g)$ as schemes. These are precisely the Harish-Chanda central characters associated to $U_X(\g)$-modules.
\end{Lemma}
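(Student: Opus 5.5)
We compute the scheme $\Spec Z_\eta(\g)$ from the tensor product decomposition of the centre. By Veldkamp's theorem in the form $Z(\g) = Z_p(\g) \otimes_{Z_p(\g)^G} U(\g)^G$, we have $Z_\eta(\g) = Z(\g)/I_\eta Z(\g)$. Here we use that $U(\g)$ is free over $Z(\g)$, or at least that $Z(\g)$ is a direct summand of $U(\g)$ as a $Z_p(\g)$-module (e.g. via the PBW-type basis). This guarantees that the image of $Z(\g)$ in $U_\eta(\g) = U(\g)/I_\eta U(\g)$ is $Z(\g)/(I_\eta U(\g)\cap Z(\g)) = Z(\g)/I_\eta Z(\g)$. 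If freeness is not directly available, I would instead argue via the fibre product \eqref{eq:centreUEA}: the map $\Spec Z_\eta(\g) \to \Spec Z(\g)$ is a closed immersion into the fibre over $\eta$, and one checks equality of ideals.

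Granting $Z_\eta(\g) = Z(\g)\otimes_{Z_p(\g)} \k_\eta$, base change of \eqref{eq:centreUEA} gives
\[
\Spec Z_\eta(\g) \cong \{\eta\} \times_{(\t^*/W)^{(1)}} \t^*/W_\bullet .
\]
Since $\eta \in X = \pi^{-1}[\lambda]$, the point $\eta$ maps to $[\lambda]$, so this fibre product equals $\Spec\k \times_{(\t^*/W)^{(1)}} \t^*/W_\bullet = \AS^{-1}[\lambda]$ as schemes. Concretely, $Z_\eta(\g) \cong \k \otimes_{Z_p(\g)^G} U(\g)^G$, where $Z_p(\g)^G \to \k$ is evaluation at $[\lambda]$. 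The inclusion \eqref{eq:ArtinSchreier} identifies this ring with the coordinate ring of $\AS^{-1}[\lambda]$. This proves the first claim, and it shows the result is independent of the choice of $\eta\in X$.

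For the second claim, let $M$ be a $U_X(\g)$-module and let $\psi$ be an HC central character occurring on $M$, i.e. $\psi$ is a character of $U(\g)^G$ through which some nonzero vector (or subquotient) is acted on. On $M$, the subalgebra $Z_p(\g)^G$ acts through $\k[(\g^*)^{(1)}]^G/(I_X\cap Z_p(\g)^G)$. Because $X$ is the fibre over $[\lambda]$, the ideal $I_X$ contains the maximal ideal $\m_{[\lambda]}$ of $Z_p(\g)^G$ (pulled back along $\pi$). Hence $\psi|_{Z_p(\g)^G}$ is evaluation at $[\lambda]$, so $\psi \in \AS^{-1}[\lambda]$.

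Conversely, every closed point $\psi$ of $\AS^{-1}[\lambda]$ occurs. Pick $\eta \in X$. By the first part, $\psi$ is a maximal ideal of $Z_\eta(\g)$, and $Z_\eta(\g)$ is a quotient of $Z(\g)$ acting faithfully on the finite-dimensional algebra $U_\eta(\g)$. Some simple $U_\eta(\g)$-module then has central character $\psi$: the nonzero block of $U_\eta(\g)$ corresponding to the idempotent of the local factor of $Z_\eta(\g)$ at $\psi$ has a simple module, on which $U(\g)^G$ acts via $\psi$. Since $I_X \subseteq I_\eta$, such a module is a $U_X(\g)$-module.

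The main obstacle I expect is the first step: justifying that the image of $Z(\g)$ in $U_\eta(\g)$ equals $Z(\g)/I_\eta Z(\g)$, which amounts to the intersection $I_\eta U(\g)\cap Z(\g) = I_\eta Z(\g)$. I would first try to prove this using that $U(\g)$ is a free $Z(\g)$-module under the standard hypotheses (cf. \cite[\textsection 9]{JaLA}), or at least that $Z(\g)\to U(\g)$ splits as $Z_p(\g)$-modules.
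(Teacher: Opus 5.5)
Your reduction is sound as far as it goes. Veldkamp's decomposition gives $Z(\g)\otimes_{Z_p(\g)}\k_\eta\cong U(\g)^G\otimes_{Z_p(\g)^G}\k_{[\lambda]}$, which is the coordinate ring of $\AS^{-1}[\lambda]$, and this ring surjects onto $Z_\eta(\g)$. Your argument for the second sentence of the lemma is also fine.

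\textbf{The gap.} The whole content of the lemma is that this surjection is injective, i.e.\ $I_\eta U(\g)\cap Z(\g)=I_\eta Z(\g)$, and you do not prove it.

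Your first justification is false: $U(\g)$ is not free, or even projective, over $Z(\g)$ in general. Take $\g=\mathfrak{sl}_2$, $\eta=0$, and a non-Steinberg block $B$ of $U_0(\g)$, with simples $L(\lambda)$ and $L(p-2-\lambda)$.
\begin{enumerate}
\item The local factor of $Z(\g)/I_0Z(\g)$ at this point is $\k[C]/(C-c)^2$, where $C$ is the Casimir.
\item Local freeness of rank $p^2$ would make $B$ free of rank $p^2$ over this factor, forcing $\dim B/(C-c)B=p^2$.
\item But $C-c$ acts on each projective indecomposable $Q(\nu)$ of $B$ by a nilpotent endomorphism, whose image lies in the socle $L(\nu)$.
\item Hence $\dim (C-c)B\le(\lambda+1)^2+(p-1-\lambda)^2<p^2$, so $\dim B/(C-c)B>p^2$, a contradiction.
\end{enumerate}

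Your second justification is that $Z(\g)\hookrightarrow U(\g)$ splits over $Z_p(\g)$. Both modules are free of finite rank over $Z_p(\g)$, so this splitting is equivalent to injectivity of $Z(\g)\otimes_{Z_p(\g)}\k_\eta\to U_\eta(\g)$ for every $\eta$. That is exactly the claim you are trying to prove. A PBW basis says nothing about it, and your fallback (``one checks equality of ideals'') is again the claim itself.

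\textbf{The missing ingredient} is a dimension count: $\dim Z_\eta(\g)=p^{\rank\g}$ for every $\eta$. This is a genuinely nontrivial theorem, which the paper imports as \cite[Theorem 8.2]{Special Slices}.

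The paper argues directly with $U(\g)^G$:
\begin{enumerate}
\item $U(\g)^G$ already surjects onto $Z_\eta(\g)$, so $Z_\eta(\g)=U(\g)^G/(I_\eta U(\g)\cap U(\g)^G)$.
\item The inclusion $J_\lambda U(\g)^G\subseteq I_\eta U(\g)\cap U(\g)^G$ gives a surjection onto this from $U(\g)^G/J_\lambda U(\g)^G$.
\item The source has dimension $p^{\rank\g}$, because $U(\g)^G$ is free of that rank over $Z_p(\g)^G$ \cite[Theorem 3.5(2)]{Brown Gordon}.
\item The target has the same dimension, so the surjection is an isomorphism.
\end{enumerate}

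If you add that dimension result, your surjection $Z(\g)\otimes_{Z_p(\g)}\k_\eta\twoheadrightarrow Z_\eta(\g)$ is an isomorphism for the same reason. Your argument then becomes essentially the paper's.
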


\begin{proof}
    Since the Harish-Chandra centre $U(\g)^G$ surjects onto $Z_\eta(\g)$, we have that $Z_\eta(\g) = \im(U(\g)^G \to U_\eta(\g))$.
    Therefore a maximal ideal of $Z_\eta(\g)$ corresponds to a maximal ideal of $U(\g)^G$ whose image remains proper in $Z_\eta(\g)$.
    Thus
    \begin{equation}
        \Spec Z_\eta(\g) = \{ \psi \in \Hom(U(\g)^G, \mathbbm{k}) \mid \ker(U(\g)^G \to U_\eta(\g)) \subseteq \ker \psi \}.
    \end{equation}
   Note also that $\ker(U(\g)^G \to U_\eta(\g)) = I_\eta U(\g) \cap U(\g)^G$.

    The Artin-Schreier map  is the morphism associated to the inclusion $Z_p(\g)^G \subseteq U(\g)^G$, see \eqref{eq:ArtinSchreier}.
    Identify $Z_p(\g)^G = \k[(\t^*)^{(1)}/W]$ via \eqref{eq:Chevalleyrestriction}.
    If $J_\lambda$ denotes the defining ideal of $[\lambda]$ in $Z_p(\g)^G$ then $\AS^{-1}[\lambda]$ corresponds to the maximal ideals of $U(\g)^G$ containing $J_\lambda U(\g)^G$.
    Therefore
    \begin{equation}
        \AS^{-1}[\lambda] = \{ \psi \in \Hom(U(\g)^G, \mathbbm{k}) \mid J_\lambda U(\g)^G \subseteq \ker \psi \}.
    \end{equation}

    To prove the current lemma, it is enough to show that $J_\lambda U(\g)^G = I_\eta U(\g) \cap U(\g)^G$.
    Since $\eta \in X$, we have that $J_\lambda \subseteq I_\eta$ and so $J_\lambda = I_\eta^G$.
    Hence it is clear that $I_\eta^G U(\g)^G \subseteq I_\eta U(\g) \cap U(\g)^G$. Equivalently we have a surjection of algebras
    \begin{eqnarray}
    \label{eq:alghom}
        U(\g)^G / I_\eta^G U(\g)^G \onto U(\g)^G / I_\eta U(\g) \cap U(\g)^G.
    \end{eqnarray}

    By \cite[Theorem 8.2]{Special Slices} we have that $\im(U(\g)^G \to U_\eta(\g))$ has dimension $p^{\rank \g}$.
    Further, $J_\lambda = I_\eta^G$ is a maximal ideal $Z_p(\g)^G$ and from \cite[Theorem 3.5(2)]{Brown Gordon} we know that $U(\g)^G$ is a free $Z_p(\g)^G$-module of rank $p^{\rank \g}$. The latter implies that the left hand side of \eqref{eq:alghom} also has dimension $p^{\rank\g}.$
    This implies that \eqref{eq:alghom} is an isomorphism. This implies $J_\lambda U(\g)^G = I_\eta U(\g) \cap U(\g)^G$ and the proof is complete.
\end{proof}

\begin{Theorem} \label{T:blockdecomp}
    The diagonal map
    \begin{equation} \label{eq: block decomp phi}
        \phi \colon U_X(\g) \to \bigoplus_{\psi \in \AS^{-1}[\lambda]} U_X^{(\psi)} (\g)
    \end{equation}
    is an isomorphism.
\end{Theorem}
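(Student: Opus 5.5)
The plan is to apply the Chinese remainder theorem to the central subalgebra of $U_X(\g)$ generated by the images of $Z_p(\g)$ and $U(\g)^G$. Write $A$ for the image of $U(\g)^G$ in $U_X(\g)$; it is central because $U(\g)^G \subseteq Z(\g)$. The argument has two steps: first show that $A$ is a finite-dimensional commutative algebra whose maximal spectrum is exactly $\AS^{-1}[\lambda]$, and then decompose $A$ and pass this to $U_X(\g)$.

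For the first step, recall that $X = \pi^{-1}[\lambda]$, so $I_X \supseteq J_\lambda Z_p(\g)$, where $J_\lambda \subseteq Z_p(\g)^G$ is the maximal ideal of $[\lambda]$. Hence $A$ is a quotient of $U(\g)^G/J_\lambda U(\g)^G$. By \cite[Theorem 3.5(2)]{Brown Gordon}, as used in Lemma~\ref{L: artin schreier preimage}, this quotient has dimension $p^{\rank\g}$, and its maximal ideals are precisely the points of $\AS^{-1}[\lambda]$. Every such point actually occurs in $\operatorname{Specm} A$: for any $\eta\in X$ the map $U(\g)^G\to U_\eta(\g)$ factors through $A$, and by Lemma~\ref{L: artin schreier preimage} its image $Z_\eta(\g)$ has spectrum $\AS^{-1}[\lambda]$. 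Therefore $A$ is Artinian with maximal ideals $\m_\psi = \ker\psi \cdot A$ for $\psi\in\AS^{-1}[\lambda]$.

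For the second step, since $A$ is a finite-dimensional commutative algebra, we have $A = \bigoplus_\psi A_{\m_\psi}$. Equivalently, there are orthogonal central idempotents $e_\psi$ summing to $1$, and each $\m_\psi$ is nilpotent on $e_\psi A$. These idempotents are central in $U_X(\g)$, so $U_X(\g) = \bigoplus_\psi e_\psi U_X(\g)$ as algebras. It remains to identify $e_\psi U_X(\g)$ with $U_X^{(\psi)}(\g) = U_X(\g)/(\ker\psi)^d U_X(\g)$, which amounts to showing that
$$(\ker\psi)^d U_X(\g) = (1-e_\psi)U_X(\g).$$
For $\psi'\neq\psi$ we have $(\ker\psi)A_{\m_{\psi'}} = A_{\m_{\psi'}}$, hence $(\ker\psi)^d e_{\psi'}A = e_{\psi'}A$. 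This gives the inclusion $\supseteq$. For the reverse inclusion, we need $(\ker\psi)^d e_\psi A = 0$, i.e.\ that the nilpotency index of $\m_\psi$ in the local algebra $A_{\m_\psi}$ is at most $d$. This holds because $\dim A_{\m_\psi}\le \dim A \le p^{\rank \g}\le p^{\dim\g}=d$, and in a local Artinian algebra of dimension $n$ the maximal ideal satisfies $\m^n=0$ by the descending chain $\m^k$. Granting this, $\phi$ is identified with the CRT decomposition, so it is an isomorphism: it is injective since $\bigcap_\psi (1-e_\psi)U_X(\g)=0$, and surjective by the idempotents.

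The main obstacle is the first step, namely controlling $A$ and checking that $\ker\psi$ generates exactly $\m_\psi$ in $A$, with no extra maximal ideals and with a dimension bound uniform in $X$. This is precisely where the choice $d=p^{\dim\g}$ enters. I would rely on the Brown--Gordon freeness result to get $\dim A\le p^{\rank\g}$, rather than on any analysis of $U_X(\g)$ itself, which is infinite-dimensional when $X$ is a positive-dimensional fibre.
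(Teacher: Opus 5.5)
Your proof is correct, and it is organised differently from the paper's. The paper first proves the fibrewise statement. For each $\eta\in X$ it splits the finite-dimensional algebra $U_\eta(\g)$ using the primitive idempotents of $Z_\eta(\g)$ (citing M\"uller's block theorem), and identifies the pieces with $U_\eta^{(\psi)}(\g)$ by the same nilpotency bound you use. It then globalises: surjectivity of $\phi$ comes from Nakayama applied to $\coker\phi$ over $\k[X^{(1)}]$, and injectivity from a rank argument using that $U_X(\g)$ is free of rank $p^{\dim\g}$ over $\k[X^{(1)}]$.

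You instead observe that the image $A$ of $U(\g)^G$ in $U_X(\g)$ is already finite-dimensional, via $J_\lambda\subseteq I_X$ and Brown--Gordon freeness. So the Chinese remainder decomposition can be done once, globally, with central idempotents $e_\psi\in A$. This is shorter and more elementary. It needs no M\"uller and no passage from fibres to $X$. In particular, it does not need $\bigoplus_\psi U_X^{(\psi)}(\g)$ to be projective over $\k[X^{(1)}]$. The paper's injectivity step infers that projectivity from constant fibre dimension, an inference that is only automatic over a reduced base.

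Your version also exhibits $\phi$ directly as a splitting by central idempotents coming from $U(\g)^G$, which is exactly what Corollary~\ref{C:centralcharactersindecomposable} uses. The fibrewise isomorphisms $\phi_\eta$ then follow from yours by base change. In exchange, the paper's route shows, via M\"uller's theorem, that each $U_\eta^{(\psi)}(\g)$ is a single block of $U_\eta(\g)$; the theorem does not need this.

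A minor point: your check that every $\psi\in\AS^{-1}[\lambda]$ occurs in $\operatorname{Specm}A$ only shows that the summands are nonzero. For the isomorphism itself, $\operatorname{Specm}A\subseteq\AS^{-1}[\lambda]$ and $\dim A\le d$ suffice, since a $\psi$ that does not occur gives $U_X^{(\psi)}(\g)=0$.
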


\begin{proof}
   Write $U_\eta^{(\psi)}(\g)$ for the quotient $U_\eta(\g)/\ker(\psi)^dU_\eta(\g)$. We first prove that the specialised map
\begin{equation} \label{eq: phi eta}
    \phi_\eta \colon U_\eta(\g) \to \bigoplus_{\psi \in \AS^{-1}[\lambda]} U_\eta^{(\psi)}(\g) 
\end{equation}
is an isomorphism for any $\eta\in X$.
By a result of M\"{u}ller \cite[Theorem 7]{Muller} (also see \cite[2.10]{Brown Gordon} for the theorem applied directly to our context) we have the decomposition into blocks
\begin{equation}
    U_\eta(\g) = \bigoplus_{i=1}^k U_\eta(\g) e_i
\end{equation}
where $e_1, \ldots, e_k$ are the primitive central idempotents in $Z_\eta(\g)$.
For each $e_i$ we let $I_i$ be the unique maximal ideal in the local ring $Z_\eta(\g) e_i$, see \cite[Theorem 8.7]{AM}.

We then define
\begin{equation}
    J_i = I_i \oplus \bigoplus_{j \neq i} Z_\eta(\g) e_j.
\end{equation}
Since $I_i$ is nilpotent, with nilpotency index bounded above by $d = \dim U_\eta(\g)$, we then have
\begin{equation}
    J_i^d = \bigoplus_{j \neq i} Z_\eta(\g) e_j
\end{equation}
so that $U_\eta(\g) / J_i^d U_\eta(\g) \cong U_\eta(\g) e_i$.
Thus by applying Lemma \ref{L: artin schreier preimage} we see that (\ref{eq: phi eta}) is an isomorphism.

{\it Claim 1:}    The map $\phi$ given in (\ref{eq: block decomp phi}) is surjective.

    Note that $\ker(Z_p(\g) \to U_X(\g)) = Z_p(\g) \cap I_X U(\g)$ and the latter is equal to $I_X$ since $U(\g)$ is free (hence faithfully flat) over the $p$-centre.
    It follows that the domain and codomain of $\phi$ are both $\k[X^{(1)}]$-modules that are finitely generated. Let $C$ denote the cokernel of $\phi$ in $\k[X^{(1)}]\lmod_{\fg}$.    

    Assume $C \neq 0$. Then there exists a maximal submodule $M \subset C$ such that $C / M \cong \k[X^{(1)}] / I_\eta$ for some $\eta \in X$. In particular $C / I_\eta C \ne 0$
    As specialisation is right exact we have that $C / I_\eta C = \coker(\phi_\eta)$ and the latter is zero since $\phi_\eta$ is an isomorphism. This contradiction confirms that $C = 0$ and so $\phi$ is surjective.

{\it Claim 2:} The map $\phi$ given in (\ref{eq: block decomp phi}) is injective.

    Note that $U_X(\g)$ is a free (hence projective) $\k[X^{(1)}]$-module of rank $p^{\dim \g}$.  Since $\phi_\eta$ is an isomorphism, it follows that $\bigoplus_\psi U_X^{(\psi)}(\g)$ is a projective $\mathbbm{k}[X^{(1)}]$-module of the same rank. It follows that $\ker \phi$ is a projective module of rank zero and hence must be zero.
\end{proof}

\subsection{Frobenius neighbourhoods of subgroup schemes}
 \label{ss:stabilisersoftwistedcharacters}

If $G$ is a group scheme then so is $G^{(1)}$ and the Frobenius morphism $F_G$ is a homomorphism of group schemes \cite[I.9]{JanRAGS}.

Let $G$ be an algebraic group scheme, not necessarily reductive. The first Frobenius kernel of $G$ is defined to be the kernel of $F_G$, which is an infinitesimal, normal subgroup scheme of $G$. Its structure and representation theory are described in detail in \cite[II.2 \& II.3]{JanRAGS}. The most important feature for our purposes is the following equivalence of categories
\begin{eqnarray}
    \label{eq:Frobeniuskernelandrestrictedenvelopingalgebra}
    G_1\lmod \isoto U_0(\g)\lmod.
\end{eqnarray}
This can be proven by combining the remarks preceding I.8.4(2) with I.8.6(2) and I.9.6(4) in \cite{JanRAGS}.

We regard $G_1$ as a Frobenius neighbourhood of the trivial subgroup scheme $\{1\} \subseteq G$. At several points in our paper we will need to consider the structure and representations of Frobenius neighbourhoods of more general subgroup schemes, and we prepare the groundwork here.

Let $H \subseteq G$ be a closed subgroup scheme and define $K:= F_G^{-1}(H^{(1)})$. We have the following fibre product description
\begin{eqnarray}\label{e:defineK}
    K := G \times_{G^{(1)}} H^{(1)}.
\end{eqnarray}
Consider the two subgroups of $K$
\begin{eqnarray}
    H \times_{G^{(1)}} H^{(1)} & \cong & H\\
    G \times_{G^{(1)}} \{1\} & \cong & G_1.
\end{eqnarray}
It is not hard to see that $K$ is the closed subgroup scheme of $G$ generated by $H$ and $G_1$, and that multiplication in $G$ defines a surjective homomorphism of group schemes
\begin{eqnarray}
\label{eq:semidirectsurjection}
    G_1 \rtimes H \onto K.
\end{eqnarray}
\begin{Lemma}\label{L:semidir}
    The homomorphism \eqref{eq:semidirectsurjection} induces an isomorphism
    \begin{eqnarray}
        \label{eq:FITgoupschemes}
        (G_1 \rtimes H) / H_1 \isoto K
    \end{eqnarray}
    
    where $H_1(A) = \{(g,g^{-1}) \in G_1(A) \rtimes H(A)\}$ for any commutative $\k$-algebra $A$.
\end{Lemma}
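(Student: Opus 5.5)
The plan is to identify the kernel of the multiplication map $m \colon G_1 \rtimes H \to K$, $(g,h) \mapsto gh$, with $H_1$ embedded anti-diagonally. Then we check that $m$ is faithfully flat, so that the quotient by this normal subgroup scheme is $K$. Here the semidirect product uses conjugation, $(g,h)(g',h') = (g\,hg'h^{-1}, hh')$, and $H_1 = \ker F_H = H \cap G_1$ as a subgroup scheme of $G$.

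\emph{Step 1 (homomorphism and kernel).} For any commutative $\k$-algebra $A$, $m$ is a homomorphism: $g\,hg'h^{-1}\,hh' = ghg'h'$. Its kernel consists of pairs $(g,h)$ with $gh=1$, that is $h = g^{-1}$ with $g \in G_1(A) \cap H(A) = H_1(A)$. So $\ker m$ is exactly the subgroup functor described in the statement, with $g \in H_1(A)$. This is functorial and representable, being the fibre of $m$ over $1$. A kernel is automatically normal, so $(G_1 \rtimes H)/H_1$ exists as an affine group scheme by \cite[I.6]{JanRAGS}. Moreover $m$ factors through a monomorphism $\bar m \colon (G_1\rtimes H)/H_1 \to K$, which is a closed immersion, since homomorphisms of affine group schemes with trivial kernel are closed immersions (cf.\ \cite[I.6.5, I.7]{JanRAGS}).

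\emph{Step 2 (surjectivity).} It remains to see that $\bar m$ is an isomorphism, i.e.\ that the closed subgroup scheme $m(G_1 \rtimes H) \subseteq K$ is all of $K$. I would argue with the fibre product $K = G \times_{G^{(1)}} H^{(1)}$. Both $G_1 \rtimes H$ and $K$ map to $H^{(1)}$, via $(g,h) \mapsto F_H(h)$ and the projection respectively. The map $F_H \colon H \to H^{(1)}$ is faithfully flat, since $H$ is of finite type over a perfect field; its reduced part and Frobenius are surjective, and flatness follows from the quotient description $H/H_1 \cong H^{(1)}$ \cite[I.9.5]{JanRAGS}. Given an $A$-point $(x,y)$ of $K$ with $F_G(x) = y$, after an fppf extension $A \to A'$ we can lift $y$ to some $h \in H(A')$. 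Then $F_G(xh^{-1}) = 1$, so $g := xh^{-1} \in G_1(A')$ and $x = gh$. Hence $m$ is an fppf epimorphism of sheaves, and so $\bar m$ is an isomorphism of fppf sheaves, hence of schemes.

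\emph{Main obstacle.} The delicate step is Step 2: justifying that $F_H \colon H \to H^{(1)}$ is fppf surjective for an arbitrary (possibly non-reduced) closed subgroup scheme $H$, and that fppf-local surjectivity suffices to identify the quotient $(G_1\rtimes H)/H_1$ (defined as a sheaf quotient in \cite[I.5--I.6]{JanRAGS}) with $K$. I would first try to cite \cite[I.9.5]{JanRAGS}, which gives $H/H_1 \cong H^{(1)}$, to obtain faithful flatness of $F_H$. As an alternative I would verify directly that the comorphism $\k[K] \to \k[G_1\rtimes H]$ is injective with image the $H_1$-invariants, comparing dimensions after base change to the fibre over $H^{(1)}$.
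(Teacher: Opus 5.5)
Your Step~1 coincides with the paper's proof. The paper computes the kernel of $G_1\rtimes H\to K$ exactly as you do, and then invokes the isomorphism theorem \cite[Theorem~IX.3.1]{Mi}, having asserted just before the lemma that multiplication $G_1\rtimes H\to K$ is surjective (``not hard to see'').

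Your Step~2 tries to prove that surjectivity, and you have located the crux correctly. However, the claim you rely on is false. When $H$ is non-reduced, $F_H\colon H\to H^{(1)}$ need not be faithfully flat, and $H/H_1\cong H^{(1)}$ need not hold. Take $G=\G_a$ and $H=\alpha_p$:
\begin{itemize}
\item $F_H$ is the trivial homomorphism, since its comorphism sends $y\mapsto x^p=0$ in $\k[x]/(x^p)$.
\item $H_1=H$, so $H/H_1$ is trivial while $H^{(1)}\cong\alpha_p$.
\end{itemize}
Worse, the lemma itself fails in this example. We have $\k[K]=\k[x]\otimes_{\k[y]}\k[y]/(y^p)=\k[x]/(x^{p^2})$, so $K=\alpha_{p^2}$. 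But the image of $G_1\rtimes H=\alpha_p\times\alpha_p\to K$, $(a,b)\mapsto a+b$, is only $\alpha_p$, because the comorphism kills $x^p$. Equivalently, $(G_1\rtimes H)/H_1$ has order $p$ while $K$ has order $p^2$. So Step~2 cannot be closed for arbitrary closed subgroup schemes. Your fallback, injectivity of $\k[K]\to\k[G_1\rtimes H]$, fails for the same reason.

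The missing ingredient is the hypothesis that $H$ is reduced, hence smooth since $\k$ is perfect. Under that hypothesis:
\begin{itemize}
\item $F_H$ is finite, flat and surjective, hence an fppf covering; this is where \cite[I.9.5]{JanRAGS} legitimately applies.
\item Your lifting argument then goes through verbatim: write $x=(xh^{-1})h$ with $F_G(xh^{-1})=1$.
\item So $m$ is an fppf epimorphism with kernel $H_1$, and $\bar m$ is an isomorphism.
\end{itemize}
This hypothesis holds wherever the paper uses the lemma: for $H=G^\chi$, which is smooth under the standard hypotheses (cf.\ \cite[2.2]{JaNO}), and for $H=\Delta(G)\subseteq G\times G$. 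The paper also tacitly needs it elsewhere, since the proof of Theorem~\ref{T:simplesFrobeniusneighbourhood} uses density of $H(\k)$ in $H$. With reducedness added, your proof is the paper's argument with the surjectivity it leaves to the reader actually supplied, and Step~2 shows exactly where that hypothesis enters.
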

\begin{proof}
    First of all, note that $H_1 \subseteq G_1 \rtimes H$ defined in the statement of the lemma is contained in the kernel of \eqref{eq:semidirectsurjection}. Furthermore, the map $K = G \times_{G^{(1)}} H^{(1)} \to G$ is injective so if $A$ is a $\k$-algebra, $(g,1) \in G_1(A) \subseteq K(A)$ and $(h, F(h)) \in H(A) \subseteq K(A)$ then whenever $(g,1) , (h, F(h))$ maps to the identity element under \eqref{eq:semidirectsurjection}, we must have $gh = 1$, i.e. $g = h^{-1}$. This shows that $H_1$ is precisely the kernel of \eqref{eq:semidirectsurjection}.

    Now we can apply \cite[Theorem~IX.3.1]{Mi} to see that \eqref{eq:FITgoupschemes} is an isomorphism.
\end{proof}

\begin{Remark}
One special case of this construction occurs when $\chi \in \g^*$ and $G$ acts on $(\g^*)^{(1)}$ via $$G \to G^{(1)} \to \End (\g^*)^{(1)},$$ the Frobenius morphism composed with the coadjoint representation of $G^{(1)}$. Then $G^{F_{\g^*}(\chi)}$ is the Frobenius neighbourhood of $G^\chi$ in $G$. This group scheme played a key role in the classification of Hamiltonian quantizations of $\GL_N$-orbits recently described by the second and third authors in \cite{ATW}, and will play a key role in the present work. We often write it simply as $G^{F(\chi)}$ for eacse of notation.

When $T \subseteq G$ is a maximal torus and $\chi \in \Lie(T)$ is a regular element, the group $G^{F(\chi)}$ is nothing but $G_1T$, which has appeared in various works, both classical and modern, in representation theory.
\end{Remark}

\section{Harish-Chandra bimodules}\label{s:HCbimods}

In this section we introduce the main objects of study throughout the paper: Harish-Chandra bimodules. We then derive some immediate structural features of these bimodules, including that $p$-support of a simple Harish-Chandra bimodules is a semisimple coadjoint orbit.

\subsection{Equivariant modules}
\label{ss:equivariantmodules}

Let $G$ be a group scheme which acts on an associative algebra $A$ by algebra automorphisms. A {\it weakly $G$-equivariant} (or just {\it $G$-equivariant}) $A$-module is an $A$-module equipped with the structure of a $G$-module such that the module map $A\otimes M\to M$ is $G$-equivariant. Equivalently, $M$ is an $A$-module in the category of $G$-modules. We denote the category of $G$-equivariant $A$-modules by $A\lmod^G$, and denote the category of $G$-equivariant $A$-modules which are finitely generated over $A$ by $A\lmod^G_{\fg}$. Note that such $M$ can be generated by a finite-dimensional $G$-submodule, since $G$ acts locally finitely.

If $\g = \Lie(G)$ and $A$ is equipped with a Lie algebra homomorphism $\g\to A$, we say that a $G$-equivariant $A$-module $M$ is {\it strongly $G$-equivariant} if the differential of the $G$-module structure coincides with $\g \to A \to \End(M)$. The category of strongly $G$-equivariant $A$-modules is denoted $(A,G)\lmod$, and we write $(A,G)\lmod_{\fg}$ for the subcategory of modules which are finitely-generated over $A$. If $A=U(\a)$, for a Lie algebra $\a$ containing $\g$ as a $G$-submodule, then objects in $(A,G)\lmod$ are usually called modules for the Harish-Chandra pair $(\a,G)$, cf. \cite[\textsection 5.2]{Et}.

If $A=R$ is a commutative algebra, then the categories $R\lmod$ and $R\lmod_{\fg}$ are $\k$-linear monoidal categories with tensor product $M\otimes_R N$. If $M$ and $N$ are $G$-equivariant $R$-modules then $M\otimes_R N$ can be given the structure of a $G$-equivariant $R$-module via the diagonal inclusion
\begin{eqnarray}
\label{eq:diagonalembedding}
G\hookrightarrow G\times G \text{ given by } g\mapsto (g,g).    
\end{eqnarray}
Thus, $R\lmod^G$ and $R\lmod_{\fg}^G$ are also monoidal categories.

Let $\a$ be a Lie algebra on which $G$ acts via automorphisms such that $\g$ is a $G$-submodule of $\a$, and let $\xi:\a\to A$ be a Lie algebra homomorphism. If $A=U(\a)$ or $U_0(\a)$ we may further equip $A\lmod$ with the structure of a $\k$-linear monoidal category by defining for $M,N\in A\lmod$ the vector space tensor product $M\otimes N$ to have $A$-module structure given by $$x\cdot (m\otimes n)=xm\otimes n + m\otimes xn \text{ for all } x\in\a.$$ This induces a monoidal structure on $A\lmod^G$ and $(A,G)\lmod$. In this case one may consider an algebra object $B$ in the category $(A, G)\lmod$ and define a notion of $(A, G)$-equivariant $B$-modules in the obvious way. This category will be denoted $B\lmod^{(A,G)}$.

\begin{Lemma}
\label{L:monoidalequivalence}
    Let $H$ be a subgroup of $G$  and let $K$ as defined in \eqref{e:defineK}. There is a equivalence of monoidal categories between $K\lmod$ and $(U_0(\g),H)\lmod$.
\end{Lemma}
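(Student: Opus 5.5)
We construct mutually inverse monoidal functors using Lemma~\ref{L:semidir}, which identifies $K$ with $(G_1\rtimes H)/H_1$. Hence a $K$-module is the same thing as a $G_1\rtimes H$-module on which the antidiagonal copy $H_1=\{(h,h^{-1})\}$ of the Frobenius kernel of $H$ acts trivially. We first check that $H_1$ is normal in $G_1\rtimes H$, so that this description is legitimate.

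Next we unwind the semidirect product. A $G_1\rtimes H$-module is a $G_1$-module $M$ together with an $H$-module structure satisfying $h\cdot(g\cdot m)=(hgh^{-1})\cdot(h\cdot m)$. By \eqref{eq:Frobeniuskernelandrestrictedenvelopingalgebra}, a $G_1$-module is the same as a $U_0(\g)$-module. The compatibility condition says exactly that the action map $U_0(\g)\otimes M\to M$ is $H$-equivariant, where $H$ acts on $U_0(\g)$ by the adjoint action. Verifying this is a Hopf-algebraic check at the level of distribution algebras: $\operatorname{Dist}(G_1)=U_0(\g)$, and conjugation by $H$ on $G_1$ induces $\Ad$ on $U_0(\g)$. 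So $G_1\rtimes H\lmod\simeq U_0(\g)\lmod^H$.

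We then show that triviality of $H_1$ is the strong equivariance condition. The subgroup $H_1$ is infinitesimal with $\operatorname{Dist}(H_1)=U_0(\mathfrak h)$, embedded via $x\mapsto (x,-x)$ at the Lie algebra level. Triviality of an $H_1$-action on a module is equivalent to $\operatorname{Lie}(H_1)$ acting by zero. This uses the standard fact that representations of a height-one infinitesimal group correspond to restricted representations of its Lie algebra, again by \eqref{eq:Frobeniuskernelandrestrictedenvelopingalgebra} applied to $H$. The condition that $(x,-x)$ acts by zero means the differential of the $H$-action equals $\mathfrak h\to\g\to U_0(\g)\to\End(M)$, i.e.\ $M\in(U_0(\g),H)\lmod$. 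Morphisms match on both sides, since they are maps commuting with $G_1$ and $H$, equivalently with $U_0(\g)$ and $H$. This gives an equivalence of categories $K\lmod\simeq (U_0(\g),H)\lmod$.

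The step I expect to be the main obstacle is the bookkeeping of the preceding paragraph. One must identify the $G_1\rtimes H$-action through \eqref{eq:semidirectsurjection} precisely, including sign conventions for which copy of $\mathfrak h$ lies in $\operatorname{Lie}(H_1)$. Only then does one get exactly the differential condition (HC2)-style rather than a twisted version.

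Finally, we check monoidality. The tensor product on $K\lmod$ is the diagonal one. Under the equivalence, $G_1$ acts diagonally, which corresponds to $U_0(\g)$ acting via the coproduct $x\mapsto x\otimes1+1\otimes x$, and $H$ acts diagonally. This is the monoidal structure described on $(U_0(\g),H)\lmod$ in Section~\ref{ss:equivariantmodules}. The underlying vector spaces are unchanged, so $J$ and $\phi$ can be taken to be identities, and the monoidal axioms hold trivially.
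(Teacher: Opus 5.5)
Your proposal is correct and follows the same route as the paper. Both use Lemma~\ref{L:semidir} to pass to $G_1\rtimes H$-modules on which $H_1$ acts trivially, and both transport the $G_1$-action through \eqref{eq:Frobeniuskernelandrestrictedenvelopingalgebra}. You then read $H$-equivariance off the semidirect product structure and strong equivariance off the triviality of $H_1$, and get monoidality from the diagonal structures. You simply fill in details the paper leaves to the reader.

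The sign issue you flag is harmless. Triviality of $H_1$ depends only on the subspace $\operatorname{Lie}(H_1)=\{(x,-x)\mid x\in\operatorname{Lie}(H)\}$, not on how it is parametrised. Strictly, the homomorphism is $h\mapsto(h^{-1},h)$, and $h\mapsto(h,h^{-1})$ is an anti-homomorphism, but both have the same image.
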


\begin{proof}
    Observe first that Lemma~\ref{L:semidir} implies that there is a monoidal equivalence between $K\lmod$ and the category $\cC$ of $G_1\rtimes H$-modules on which $H_1$ acts trivially. We then define a functor $\cF:\cC\to (U_0(\g),H)\lmod $ where $\cF(M)=M$ as a $H$-module, and $\cF(M)$ is given the structure of a $U_0(\g)$-module via \eqref{eq:Frobeniuskernelandrestrictedenvelopingalgebra}. To check that $\cF(M)\in(U_0(\g),H)\lmod$, note that the $H$-equivariance of $U_0(\g)\otimes\cF(M)\to\cF(M)$ follows from $M$ being a module over $G_1\rtimes H$ and the strong equivariance follows from the triviality of the $H_1$-action.

    To show that $\cF$ is an equivalence of categories, observe that \eqref{eq:Frobeniuskernelandrestrictedenvelopingalgebra} easily implies it is fully faithful. For essential surjectivity, note that $M\in (H,U_0(\g))\lmod$ naturally has an $H$-module structure and can be equipped with a $G_1$-module structure using \eqref{eq:Frobeniuskernelandrestrictedenvelopingalgebra}; what remains is to check that these module structures induce a $G_1\rtimes H$-modules on which $H_1$ acts trivially. One can check that the $G_1\rtimes H$-module structure follows from $M$ being an $H$-equivariant $U_0(\g)$-module, whilst the $H_1$-trivially follows from the strongly $H$-equivariance.

    Finally, the fact that $\cF$ is an equivalence of monoidal categories follows from the fact that \eqref{eq:Frobeniuskernelandrestrictedenvelopingalgebra} is an equivalence of monoidal categories.
\end{proof}

\subsection{Harish-Chandra bimodules}
\label{ss:HCbimods}

Let $G$ be a group scheme and $\g = \Lie G$. The natural Hopf structure admits a comultiplication $\Delta$ and antipode $\omega$ determined, for $x\in \g$, by
\begin{eqnarray*}
    \Delta(x) & = & x\otimes 1 + 1\otimes x\\
    \omega(x) & = & -x
\end{eqnarray*}
 
Pulling back through $\Id\otimes \omega$ allows us to identify (left) $U(\g) \otimes U(\g)$-modules with $U(\g) \otimes U(\g)^\op$-modules, which are the same as $U(\g)$-$U(\g)$-bimodules. Throughout the paper, we will view bimodules in this way. We refer to the $\g$-action corresponding to $U(\g) \otimes 1$ and $1\otimes U(\g)$ as the {\it left} and {\it right} actions, respectively. 

Since $G$ acts on $U(\g)\otimes U(\g)$ via the diagonal embedding \eqref{eq:diagonalembedding}, we can consider the category $(U(\g) \otimes U(\g), G)\lmod$ of strongly $G$-equivariant $U(\g) \otimes U(\g)$-modules. The category $\HC^G U(\g)$ of Harish-Chandra bimodules is the full subcategory of $(U(\g) \otimes U(\g), G)\lmod$ whose objects are finitely generated on the left and (equivalently) on the right.

The strong equivariance condition means that, for $M \in \HC^G U(\g)$ with corresponding representation $\tau : U(\g) \otimes U(\g) \to \End(M)$, the differential of the $G$-action coincides with $\tau \circ \Delta|_{\g}$. 

More generally, $A$ is an algebra equipped with a Lie algebra homomorphism $\g \to A$ then the category $\HC^G A$ can be defined analogously.

As was observed in \cite[(3.5)]{BR} (for example) the forgetful functor 
    \begin{eqnarray}
        \label{L:HCequivalencewithequivariance}
    \HC^G U(\g) \isoto (U(\g)\otimes 1)\lmod^G_{\fg} = U(\g)\lmod^G_{\fg}
    \end{eqnarray}
   is an equivalence. The quasi-inverse functor can be understood as follows: the right action of $U(\g)$ on $M$ can be recovered from the left action and the differential of the $G$-action. When this action is recovered, strong $G$-equivariance is satisfied by construction (property (HC2) from Section~\ref{ss:thispaper}).

\subsection{The $p$-support of a Harish-Chandra bimodule} 
\label{ss:psuppHCbimods}

Now equip $\g$ with the natural $G$-equivariant $p$-mapping, and identify $Z_p(\g) = \k[(\g^*)^{(1)}]$. It is easy to check that the elements $x^p - x^{[p]} \in U(\g)$ are primitive
\begin{eqnarray}
    \label{eq:pcentreprimitives}
    \Delta(x^p - x^{[p]}) = (x^p - x^{[p]}) \otimes 1 + 1\otimes (x^p - x^{[p]}) \ \ \ \ \text{ for } x\in \g.
\end{eqnarray}

Let $M \in \HC^G U(\g)$ and let $\tau : U(\g) \otimes U(\g) \to \End M$ be the corresponding representation. Since $\tau \circ \Delta|_\g$ is the differential of the $G$-action, it is a restricted representation of $\g$, which means that $\tau \circ \Delta(x)^p = \tau \circ\Delta(x^{[p]})$ for all $x\in \g$. Equivalently,
\begin{eqnarray}
    \label{eq:leftequalsrightpchar}
\tau \circ \Delta|_{Z_p(\g)_+} = 0    
\end{eqnarray}
where $Z_p(\g)_+$ denotes the augmentation ideal of the $p$-centre, i.e. the maximal ideal of $0 \in (\g^*)^{(1)} = \Spec Z_p(\g)$. As a consequence, the right hand side of \eqref{eq:pcentreprimitives} acts trivially on $M$. This implies that $\Ann_{Z_p(\g) \otimes 1} M$ and $\Ann_{1\otimes Z_p(\g)} M$, viewed as ideals of $Z_p(\g)$, correspond bijectively under the antipode $\omega$.

The {\it $p$-support of $M$}, denoted $\Supp_p(M)$ is the subscheme of $(\g^*)^{(1)}$ with defining ideal $\Ann_{Z_p(\g) \otimes 1} M$. We could equally define support from the right action, but the previous remarks show that the schemes are isomorphic. We note that $\Supp_p(M)$ is $G$-stable.

 \begin{Lemma}
 \label{L:suportsemisimple}
    If $M \in \HC^G U(\g)$ is simple, then $\Supp_p(M)$ is a reduced scheme, and coincides with a closed $G$-orbit. In particular, if $G$ is standard reductive then $\Supp_p(M)$ is a semisimple orbit.
 \end{Lemma}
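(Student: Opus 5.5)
The plan is to use that $Z_p(\g)$ acts on $M$ through the commutative algebra $R := Z_p(\g)/\Ann_{Z_p(\g)\otimes 1}M = \k[\Supp_p(M)]$. By (HC1) and \eqref{eq:leftequalsrightpchar}, $M$ becomes a $G$-equivariant $R$-module, and every $G$-stable ideal $J \subseteq R$ produces a sub-bimodule $JM \subseteq M$. Indeed, $JM$ is stable under the left and right $U(\g)$-actions because $Z_p(\g)$ is central and its left and right actions agree up to $\omega$. It is also $G$-stable because $J$ is $G$-stable and the action map is equivariant. Simplicity of $M$ then forces $JM = 0$ or $JM = M$ for every such $J$.

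\textbf{Reducedness.} Take $J = \sqrt{0}_R$, the nilradical of $R$. This ideal is $G$-stable, since $G$ acts on $R$ by algebra automorphisms. It is nilpotent because $R$ is noetherian: $R$ is a quotient of the polynomial ring $Z_p(\g)$. If $JM = M$ then $M = J^n M = 0$ for large $n$, which is a contradiction. So $JM = 0$, hence $J \subseteq \Ann_R M = 0$, and $R$ is reduced.

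\textbf{Single closed orbit.} Let $Y \subseteq \Supp_p(M)$ be a closed $G$-orbit. One exists because $\Supp_p(M)$ is a nonempty closed $G$-stable set and orbits of minimal dimension are closed. Let $J = I_Y/\Ann M$ be its defining ideal in $R$; it is $G$-stable. If $JM = 0$ then $\Supp_p(M) \subseteq Y$, so $\Supp_p(M) = Y$ as reduced schemes, which is what we want. The main obstacle is the remaining case $JM = M$. A Nakayama-type argument is needed here, because $R$ is not local and $J$ need not be contained in the Jacobson radical.

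\textbf{Handling $JM = M$.} Since $M$ is finitely generated over $U(\g)$, which is finite over $Z_p(\g)$, $M$ is a finitely generated $R$-module. Nakayama's lemma in its determinant-trick form then gives an element $r \in 1 + J$ with $rM = 0$. Since $\Ann_R M = 0$, we get $r = 0$, so $1 \in J$ and $Y$ is empty. This contradicts the choice of $Y$. So $JM = 0$ always holds, and $\Supp_p(M) = Y$ is a closed orbit.

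Finally, for $G$ standard reductive the $G$-actions on $(\g^*)^{(1)}$ and $\g^*$ correspond under the Frobenius twist. Closed orbits in $(\g^*)^{(1)}$ are therefore twists of closed orbits in $\g^*$, and Lemma~\ref{L:Zariskiclosedsemisimple} identifies these as semisimple orbits.
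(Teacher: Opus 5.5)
Your proof is correct and follows the paper's route. You show that a $G$-stable ideal $J$ of $\k[\Supp_p(M)]$ yields a sub-bimodule $JM$, so $JM\in\{0,M\}$ by simplicity, and you rule out $JM=M$ for a proper nonempty $G$-stable closed subscheme via the determinant-trick form of Nakayama's lemma (the paper does this for an arbitrary proper $G$-stable subscheme, you for a chosen closed orbit $Y$). Your nilradical argument (a $G$-stable, nilpotent ideal) supplies the radicality step the paper only asserts (``since $M$ is simple, $I_M$ is radical''), and your appeal to a closed orbit of minimal dimension makes explicit the passage from ``no proper $G$-stable subscheme'' to ``a single closed orbit''.
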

    \begin{proof}
        Write $I_M := \Ann_{Z_p(\g)\otimes 1} M \unlhd Z_p(\g)$. Since $M$ is simple, $I_M$ is radical. Hence the $p$-support is reduced and $G$-stable.
        
        Now suppose that $X \subsetneq \Supp_p(M)$ is a proper $G$-stable subscheme. Then $I_X M \subseteq M$ is a  submodule in $\HC^G U(\g)$. Using simplicity and the fact that $I_M \subsetneq I_X$ we have $I_XM = M$.

        Note that $U(\g)$, and hence $M$, is finitely generated over $Z_p(\g)$. Now $I_X M = M$ implies by Nakayama's lemma that there exists $r\in I_M$ with $r-1\in I_X$. This contradicts $I_M\subseteq I_X$ unless $X=\emptyset$, and so we deduce that $\Supp_p(M)$ is a closed $G$-orbit.
        
        The final claim follows from Lemma~\ref{L:Zariskiclosedsemisimple}.
    \end{proof}

For $X \subseteq (\g^*)^{(1)}$, a closed subscheme defined by $I_X \subseteq Z_p(\g)$, we write $\HC_X^G U(\g)$ for the category of Harish-Chandra bimodules with $\Supp_p(M) \subseteq X$. If $X$ is a closed subscheme in $\g^*$, we often abuse notation by writing $\HC_X^G U(\g)$ in place of $\HC_{X^{(1)}}^G U(\g)$ and $U_X(\g)$ in place of $U_{X^{(1)}}(\g)$, since this will not cause confusion. The equivalence \eqref{L:HCequivalencewithequivariance} then specialises to an equivalence of categories
\begin{eqnarray}
    \label{eq:HCequivalencewithequivariance+support}
    \HC_X^G U(\g) \isoto U_X(\g)\lmod^G_{\fg}.
\end{eqnarray} We particularly use this when $X$ is the closure of a $G$-orbit $\O$ in $\g^*$.

Recall the map $\pi$ from Subsection~\ref{ss:reductionsoverSteinbergfibres}. If $\O \subseteq \g^*$ is any orbit then there exists some $\lambda \in \t^*$ such that $\O \subseteq \pi^{-1}[\lambda]$. By Theorem~\ref{T:blockdecomp} we have an isomorphism
\begin{equation} \label{eq: phi eta orbit}
    \phi \colon U_{\overline\O}(\g) \to \bigoplus_{\psi \in \AS^{-1}[\lambda]} U_{\overline\O}^{(\psi)}(\g),
\end{equation}
where $U_{\overline\O}^{(\psi)}(\g)$ is the quotient of $U_{X}^{(\psi)}(\g)$ by $I_{\overline\O}U_{X}^{(\psi)}(\g)$. For $\psi_1, \psi_2 \in \AS^{-1}[\lambda]$ define $\HC_{\overline\O, (\psi_1, \psi_2)}^G U(\g) \subseteq \HC_{\overline\O}^G U(\g)$ to be the full category consisting of $M$ such that for all $m\in M$
\begin{itemize}
    \item[(i)] there exists $i\gg 0$ such that $(\ker \psi_1 \otimes 1)^i m = 0$;
    \item[(ii)] there exists $i\gg 0$ such that $(1\otimes \ker \psi_2)^im = 0$.
\end{itemize}
\begin{Corollary}
\label{C:centralcharactersindecomposable}
    We have an equivalence of categories
    \begin{eqnarray}
        \HC_{\overline\O}^G U(\g) \isoto \bigoplus_{\psi_1, \psi_2 \in \AS^{-1}(\lambda)} \HC_{\overline\O, (\psi_1, \psi_2)}^G U(\g).
    \end{eqnarray}
    Consequently, every indecomposable Harish-Chandra bimodule in $\HC_\cO^G U(\g)$ has a left and right generalised Harish-Chandra central character.
\end{Corollary}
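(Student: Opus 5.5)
The plan is to transport the block decomposition \eqref{eq: phi eta orbit} of $U_{\overline\O}(\g)$ through the equivalence \eqref{eq:HCequivalencewithequivariance+support}, and then get the right-hand characters from $G$-invariance. First I would check that the central idempotents are $G$-invariant. The isomorphism $\phi$ of Theorem~\ref{T:blockdecomp}, specialised to $\overline\O$, gives a decomposition $1 = \sum_\psi e_\psi$ into orthogonal central idempotents of $U_{\overline\O}(\g)$, where $e_\psi$ is the preimage of the unit of $U_{\overline\O}^{(\psi)}(\g)$. Each ideal $(\ker\psi)^d U_{\overline\O}(\g)$ is $G$-stable, since $U(\g)^G$ consists of invariants. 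Hence $\phi$ is $G$-equivariant, and so each $e_\psi$ is $G$-fixed. Since $e_\psi$ is $G$-fixed it is also $\ad\g$-invariant, which is consistent with it being central.

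Next I would decompose a given bimodule. Let $M\in\HC_{\overline\O}^G U(\g)$ and view it in $U_{\overline\O}(\g)\lmod^G_{\fg}$ via the left action. Set $M_{\psi_1} := (e_{\psi_1}\otimes 1)M$. This is a $U(\g)$-submodule for the left action, and it is $G$-stable because $e_{\psi_1}$ is $G$-fixed. Since the right action is recovered from the left action together with the differential of the $G$-action (the quasi-inverse in \eqref{L:HCequivalencewithequivariance}), $M_{\psi_1}$ is a sub-bimodule. Thus $M=\bigoplus_{\psi_1}M_{\psi_1}$ in $\HC^G$, and $(\ker\psi_1\otimes 1)^d$ kills $M_{\psi_1}$.

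For the right central character, the simplest route is to apply the symmetric argument with the right action. Using the antipode and the equivalence with right modules, $M$ is also a $G$-equivariant module over $U_{\overline\O'}(\g)$ for the right action, where $\overline\O'=-\overline\O$ is the image of $\overline\O$ under $\omega$ (recall that left and right $p$-supports correspond under $\omega$). The corresponding idempotents $1\otimes e'_{\psi_2}$ are central in $U(\g)\otimes U(\g)$. They therefore commute with the $e_{\psi_1}\otimes 1$, and both families are $G$-fixed. So $M=\bigoplus_{\psi_1,\psi_2}(e_{\psi_1}\otimes e'_{\psi_2})M$, and each summand satisfies conditions (i) and (ii).

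A point needs care here. The right characters range over $\AS^{-1}$ of the image of $[\lambda]$ under the antipode, rather than over $\AS^{-1}[\lambda]$ itself. I would either reinterpret the indexing via $\omega$ or note that the statement identifies the two sets. This bookkeeping step is the one I expect to be the main obstacle.

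Finally I would assemble the equivalence. Morphisms respect these central idempotents, so there are no nonzero Hom-spaces between different summands. It follows that the functor $M\mapsto\bigl((e_{\psi_1}\otimes e'_{\psi_2})M\bigr)_{\psi_1,\psi_2}$ is an equivalence onto the direct sum of the categories $\HC_{\overline\O,(\psi_1,\psi_2)}^G U(\g)$, with quasi-inverse given by taking the direct sum. An indecomposable $M$ then lies in a single summand, so it has left generalised character $\psi_1$ and right generalised character $\psi_2$.
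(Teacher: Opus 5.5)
Your proposal is correct and follows essentially the paper's route. The paper notes that objects of $\HC_{\overline\O}^G U(\g)$ are modules over a tensor product of two copies of the block-decomposed algebra \eqref{eq: phi eta orbit}, and calls the result a standard consequence of decomposing by central idempotents, which is exactly what you carry out; your check that the $e_\psi$ are $G$-fixed supplies a detail the paper leaves implicit.

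Your indexing caveat is legitimate under the $\omega$-twisted convention on the second tensor factor. It disappears if condition (ii) is read via honest right multiplication: $Z_p(\g)$ acts identically on both sides, so you can use the same idempotents $e_{\psi_2}$ acting on the right.
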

\begin{proof}
    Since $\HC_{\overline\O}^G U(\g)$ is a full subcategory of $U_{\overline\O}^{(\psi_1)}(\g) \otimes U_{\overline\O}^{(\psi_2)}(\g)\lmod^G$, this is a standard consequence of \eqref{eq: phi eta orbit}, see \cite[II.5]{ARS} for example.
\end{proof}

\section{Descent for Harish-Chandra bimodules}
\label{S:descent}

The goal of this section is to derive an equivalence of categories $U_\O(\g)\lmod^G_{\fg}\to U_\chi(\g)\lmod^{G^{F(\chi)}}_{\fg}$ for $\O$ a semisimple $G$-orbit in $\g^*$ and $\chi\in\O$. We do this through the theory of descent. After stating some definitions in Subsection~\ref{ss:Gequivsheaves}, we recall the general theory of descent in Subsection~\ref{ss:descent} and then apply it to Harish-Chandra bimodules in Subsection~\ref{ss:applicationtoHCbimodules}. This latter subsection also considers orbits which are not semisimple, in which case we instead obtain an equivalence between $U_\chi(\g)\lmod^{G^{F(\chi)}}_{\fg}$ and some Serre quotient of $U_\cO(\g)\lmod^G_{\fg}$.

\subsection{$G$-equivariant coherent sheaves}\label{ss:Gequivsheaves}
Let $X$ be a scheme. We recall the notion of equivariant coherent sheaves on $X$.

The category $\Coh(X)$ of coherent $\Oc_X$-modules is a $\k$-linear monoidal category, with tensor product $\cF\otimes_{\Oc_X}\cG$ defined to the sheafification of the presheaf $U\mapsto \cF(U)\otimes_{\Oc_X(U)}\cG(U)$. When $X$ is affine, $\Coh(X)$ is equivalent to the category $\k[X]\lmod_{\fg}$ of finitely-generated $\k[X]$-modules. This is in fact an equivalence of monoidal categories, where the monoidal structure on $\k[X]\lmod_{\fg}$ is given by $(M,N)\mapsto M\otimes_{\k[X]}N$ (see \cite[Proposition 5.2]{Ha}).

Fix now a group scheme $G$. We say that $X$ is a $G$-scheme if it is equipped with a morphism $\sigma:G\times X\to X$ satisfying the usual conditions for a group action; for example, $\sigma\circ(m\times 1)=\sigma\circ(1\times \sigma)$ where $m:G\times G\to G$ is the group multiplication. The coordinate ring $\k[X]$ of a $G$-scheme $X$ is naturally equipped with an action of $G$ by algebra automorphisms. For the rest of the current section, $X$ is a $G$-scheme.

Write $p_2:G\times X\to X$ for the morphism $(g,x)\mapsto x$ and $p_{23}:G\times G\times X\to G\times X$ for the morphism $(g,h,x)\mapsto (h,x)$. A $G$-equivariant sheaf on a $G$-scheme $X$ is then a pair $(\cF,\theta)$, where $\cF$ is an $\Oc_X$-module and $\theta:\sigma^*\cF\isoto p_2^*\cF$ is an isomorphism of $\Oc_{G\times X}$-modules such that 
\begin{equation}\label{eq:Gequivrel}
p_{23}^*\theta\circ (1\times\sigma)^*\theta =(m\times 1)^*\theta.
\end{equation}
A morphism of $G$-equivariant sheaves $(\cF,\theta_\cF)\to (\cG,\theta_{\cG})$ is a morphism of $\Oc_X$-modules $\cF\to\cG$ such that the following diagram commutes
\begin{eqnarray}
\begin{array}{c}\xymatrix{
     \sigma^*\cF \ar@{->}[rr]  \ar@{->}[d]^{\theta_\cF} & & \sigma^*\cG \ar@{->}[d]_{\theta_\cG}
     \\
     p_2^*\cF \ar@{->}[rr] & & p_2^*\cG
    }
\end{array}
\end{eqnarray}

We denote by $\Coh^G(X)$ the category of $G$-equivariant sheaves of coherent $\Oc_X$-modules. This is a monoidal category, where the tensor product $\cF\otimes_{\Oc_X}\cG$ is given the structure of a $G$-equivariant sheaf via the isomorphism $$\theta_{\cF\otimes\cG}:\sigma^*(\cF\otimes_{\Oc_X}\cG)=\sigma^*\cF\otimes_{\Oc_{G\times X}}\sigma^*\cG\xrightarrow{\theta_\cF\otimes \theta_\cG} p_2^*\cF\otimes_{\Oc_{G\times X}}p_2^*\cG=p_2^*(\cF\otimes_{\Oc_X}\cG).$$ 

When $X$ is affine, the monoidal equivalence of categories $\Coh(X)\to\k[X]\lmod_{\fg}$ can be upgraded to a monoidal equivalence of categories
\begin{equation}\label{eq:mod=qcoh}\Coh^G(X)\isoto \k[X]\lmod^G_{\fg}.\end{equation}

In particular, Lemma~\ref{L:Falgisalg} shows that an algebra in $\k[X]\lmod^G_{\fg}$ gives rise to an algebra in $\Coh^G(X)$. Note that an algebra in the category $\k[X]\lmod^G$ is precisely a $G$-equivariant $\k[X]$-algebra, and that an algebra in the category $\Coh^G(X)$ is a $G$-equivariant sheaf of $\Oc_X$-algebras which are coherent as $\Oc_X$-modules and for which the structure map $\theta:\sigma^*\cF\isoto p_2^*\cF$ is an isomorphism of $\Oc_{G\times X}$-algebras. 

Let us adopt the notational convention that a $G$-equivariant $\k[X]$-algebra $A$ gives rise to a $G$-equivariant sheaf $\cA$ of coherent $\Oc_X$-algebras. Lemma~\ref{L:Fmodismod} then yields an equivalence of categories
\begin{equation}\label{eq:localAmodv1}
    \cA\lmod_{\Coh^G(X)} \isoto A\lmod_{\k[X]\lmod^G}.
\end{equation} The category $A\lmod_{\k[X]\lmod^G_{\fg}}$ coincides with the category $A\lmod^G$ of $G$-equivariant $A$-modules. Furthermore, the category $\cA\lmod_{\Coh^G(X)}$ is precisely the category of $G$-equivariant sheaves $\cF$ of $\cA$-modules which are coherent as $\Oc_X$-modules and for which the structure map $\theta_\cF:\sigma^*\cF\isoto p_2^*\cF$ is an isomorphism of $\Oc_{G\times X}$-algebras such that the following diagram commutes
\begin{eqnarray}
\begin{array}{c}\xymatrix{
     \sigma^*\cA\otimes_{\Oc_{G\times X}}\sigma^*\cF \ar@{->}[rr]  \ar@{->}[d]^{\theta_\cA\otimes\theta_\cF} & & \sigma^*(\cA\otimes_{\Oc_X}\cF) \ar@{->}[rr]  & & \sigma^*\cF \ar@{->}[d]_{\theta_\cF}
     \\
     p_2^*\cA\otimes_{\Oc_{G\times X}}p_2^*\cF \ar@{->}[rr] & & p_2^*(\cA\otimes_{\Oc_X}\cF) \ar@{->}[rr] & &  p_2^*\cF
    }
\end{array}
\end{eqnarray}
We denote this category $\cA\lmod^G_{\fg}$. In particular, we may rewrite \eqref{eq:localAmodv1} as 
\begin{equation}\label{eq:locofAmod}
    A\lmod^G_{\fg}\isoto \A\lmod^G_{\fg}
\end{equation}

\subsection{Descent}\label{ss:descent}
In this subsection, we describe a manifestation of faithfully flat descent for coherent sheaves.

Let $G$ be a connected algebraic group and let $H$ be a subgroup scheme in $G$. We may view $G$ as a $G\times H$-scheme, via left and right multiplication, and thus consider $\Coh^{G\times H}(G)$. The quotient $X:=G/H$ is naturally a $G$-scheme \cite[I.5.6(8)]{JanRAGS} and the natural morphism $\pi:G\to G/H$ is $G$-equivariant.

Now we apply faithfully flat descent \cite[Expos\'{e} VIII, Corollaire 1.3]{SGA} to our context. We also direct the reader to \cite[Lemma 2.3]{Br}, \cite[Theorem 4.2.14]{HL} or \cite[Theorem~2.7]{CPS} for quite different approaches to the theory. Descent yields an equivalence of categories
\begin{eqnarray}
\label{eq:equiv1}
\Coh^G(X)\isoto \Coh^{G\times H}(G), \qquad \cF\mapsto \pi^*\cF    
\end{eqnarray}
with quasi-inverse $\cG\mapsto \pi_*^H\cG$. Note that $\pi^*\cF$ can be naturally equipped with an $H$-equivariant structure via the identity $\sigma^*\pi^*\cF=(\pi\circ\sigma)^*\cF=(\pi\circ p_2)^*\cF=p_2^*\pi^*\cF$. The functor $\cF\mapsto \pi^*\cF$ is monoidal (see \cite[Lemma 1.17.16.4]{St}), and thus the quasi-inverse $\cG\mapsto \pi_*^H\cG$ may be given the structure of a monoidal functor as in \cite[Remark 2.4.10]{EGNO}.

Writing $\eta$ for the (unique) map $G\to G/G=\{\ast\}$ and using faithfully flat descent again, we similarly obtain an equivalence of categories
\begin{eqnarray}
\label{eq:equiv2}
    \Coh^{G\times H}(G)\isoto \Coh^{H}(\{\ast\})=H\lmod_{\fd}, \qquad \cG\mapsto \cG(G)^G
\end{eqnarray}
with quasi-inverse $\cF\mapsto \eta^*\cF$. In this context, this quasi-inverse is the functor sending the $H$-module $M$ to $\Oc_G\otimes_{\underline{\k}}\underline{M}$, where $\underline{\k}$ and $\underline{M}$ denote constant sheaves. As before, $\cF\mapsto \eta^*\cF$ is monoidal and thus we may equip $\cG\mapsto \cG(G)^G$ with the structure of a monoidal functor.

Combining \eqref{eq:equiv1} and \eqref{eq:equiv2} yields the following proposition.
\begin{Proposition}\label{p:Qcoh=Hmod}
There is an equivalence of monoidal categories \begin{equation}\label{eq:qc=Hmod}
\Coh^G(X)\isoto H\lmod_{\fd}, \qquad \cF\mapsto (\k[G]\otimes_{\k[X]}\cF(X))^G
\end{equation} with (monoidal) quasi-inverse $M\mapsto \pi_*^H(\Oc_G\otimes_{\underline{\k}}\underline{M})$. 
\end{Proposition}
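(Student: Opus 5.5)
The equivalence is the composite of \eqref{eq:equiv1} and \eqref{eq:equiv2}. First, apply \eqref{eq:equiv1}, $\cF\mapsto\pi^*\cF$. Then apply \eqref{eq:equiv2}, $\cG\mapsto\cG(G)^G$. A composite of equivalences is an equivalence, and a composite of monoidal functors is monoidal: the compositions of the structure isomorphisms $J$ and $\phi$ satisfy the axioms of \cite[Definition 2.4.5]{EGNO} by a formal check. So the composite $\Coh^G(X)\to H\lmod_{\fd}$ is an equivalence of monoidal categories. Its quasi-inverse is the composite of the quasi-inverses, $M\mapsto \pi_*^H(\eta^*\underline M)=\pi_*^H(\Oc_G\otimes_{\underline{\k}}\underline M)$. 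This quasi-inverse is monoidal by \cite[Remark 2.4.10]{EGNO}, or directly because each factor has been given a monoidal structure in the preceding discussion.

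What remains is to identify the composite functor with the explicit formula $\cF\mapsto(\k[G]\otimes_{\k[X]}\cF(X))^G$. I will compute global sections of $\pi^*\cF$. When $X=G/H$ is affine, as in our application where $X$ is a closed orbit, the pullback of a coherent sheaf along $\pi:G\to X$ has global sections $\k[G]\otimes_{\k[X]}\cF(X)$, by the standard description of $\pi^*$ for affine morphisms (\cite[Proposition 5.2]{Ha}). The $G$-action on this module is the diagonal one, coming from left translation on $\k[G]$ and the equivariant structure on $\cF(X)$. Taking $G$-invariants then gives the stated formula. The residual $H$-action, via right translation on $\k[G]$, commutes with the $G$-action, so it descends to the invariants and gives the $H$-module structure.

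For general, possibly non-affine, $X$ I will still use $\cF(X)$ as written. I will check that $(\pi^*\cF)(G)=\k[G]\otimes_{\k[X]}\cF(X)$ by working on an affine $G$-stable cover, or simply restrict to the affine case, which is the only one used later.

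The main obstacle is bookkeeping rather than substance. I must make sure that the monoidal structures transported through the two descent equivalences are compatible. I must also make sure that the invariants functor in \eqref{eq:equiv2} really lands in finite-dimensional modules, so that $H\lmod_{\fd}$ is correct. For the latter, note that under \eqref{eq:equiv2} a $G\times H$-equivariant coherent sheaf on $G$ is isomorphic to $\Oc_G\otimes \underline M$, whose $G$-invariant global sections are $\k[G]^G\otimes M=M$. Here I use that $G$ acts freely and transitively on itself, so $\k[G]^G=\k$. Hence $M$ is finite-dimensional, because $\cG$ is coherent.
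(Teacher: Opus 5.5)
Your main line is the paper's proof: the proposition is the composite of the descent equivalences \eqref{eq:equiv1} and \eqref{eq:equiv2}, with quasi-inverse $\pi_*^H\eta^*$. Your identification of $\cF\mapsto(\pi^*\cF)(G)^G$ with $(\k[G]\otimes_{\k[X]}\cF(X))^G$, via the affine description of pullback, is correct when $X$ is affine.

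The step that would fail is your fallback for non-affine $X$. There is no ``affine $G$-stable cover'': $G$ acts transitively on $X=G/H$, so its only nonempty $G$-stable open subset is $X$ itself. Moreover, the identity $(\pi^*\cF)(G)=\k[G]\otimes_{\k[X]}\cF(X)$ is false in general. Take $G=\mathrm{SL}_2$, $H=B$, $X=\mathbb{P}^1$ and $\cF=\Oc_{\mathbb{P}^1}(-1)$. Then $\k[X]=\k$ and $\cF(X)=0$, so the displayed formula gives $0$, whereas the equivalence must send this nonzero sheaf to a one-dimensional $B$-module. For arbitrary $X$ the functor therefore has to be read as $\cF\mapsto(\pi^*\cF)(G)^G$.

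What does hold, and covers everything the paper needs, is the formula for quasi-affine $X$. The canonical map $j\colon X\to\Spec\k[X]$, with $\k[X]=\Oc_X(X)$, is a quasi-compact open immersion, so $\cF\cong j^*\widetilde{\cF(X)}$. Since $G$ is affine, $\pi^*\cF\cong(j\circ\pi)^*\widetilde{\cF(X)}$ is the sheaf associated to $\k[G]\otimes_{\k[X]}\cF(X)$.

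You should prove this rather than just ``restrict to the affine case'', because your claim that only affine $X$ is used later is not accurate. Proposition~\ref{P:MainEquiv}, through Corollary~\ref{c:kXmod=Hmod}, applies the equivalence to $\O^{(1)}$ for an arbitrary orbit $\O$. Such $\O^{(1)}$ is quasi-affine, being locally closed in $(\g^*)^{(1)}$, but need not be affine when $\O$ is not closed. Only the final classification restricts to closed orbits.
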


In our work we will need to describe the functor of categories \eqref{eq:qc=Hmod} in an alternative manner. Denote by $\iota$ the $H$-equivariant morphism $\{\ast\}\to X$ which sends $\ast$ to $x_0:=\pi(1_G) \in X$. For $\cF\in\Coh(X)$, the pullback $\iota^*\cF\in \Coh^H(\{\ast\})=H\lmod_{\fd}$ is precisely
\begin{eqnarray}
    \iota^*\cF(\{\ast\})=\k\otimes_{\k[X]/\m_{x_0}} \cF(X)_{\m_{x_0}}=\cF(X)/\m_{x_0}\cF(X).
\end{eqnarray} 
As in \cite[Lemma 2.4]{Br}, $\iota^*$ is a quasi-inverse to the functor $\pi_*^H\eta_*:H\lmod_{\fd}\to\Coh^G(X)$. Indeed, writing $j:\{\ast\}\to G$ for the morphism sending $\ast$ to $1_G$ and observing that $\iota=\pi\circ j$, we easily obtain that $$\iota^*\pi_*^H\eta^*=j^*\pi^*\pi_*^H\eta^*=j^*\eta^*=(\eta\circ j)^*=\Id.$$ Therefore, for any $\cF\in\Coh^G(X)$, we have $$\iota^*\cF\simeq(\eta_*^G \pi^*)(\pi_*^H\eta^*)\iota^*\cF\simeq \eta_*^G\pi^*\cF.$$ 

In particular, we may reinterpret Proposition~\ref{p:Qcoh=Hmod} as follows.
\begin{Corollary}\label{c:kXmod=Hmod}
There is an equivalence of monoidal categories \begin{equation}\label{eq:qc=Hmodv2}
\Coh^G(X)\isoto H\lmod_{\fd}, \qquad \cF\mapsto \cF(X)/\m_{x_0}\cF(X)
\end{equation} with (monoidal) quasi-inverse $M\mapsto \pi_*^H(\Oc_G\otimes_{\underline{\k}}\underline{M})$. 
\end{Corollary}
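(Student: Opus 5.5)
The plan is to deduce Corollary~\ref{c:kXmod=Hmod} directly from Proposition~\ref{p:Qcoh=Hmod}. We transport the monoidal structure along a natural isomorphism between the functor $\cF\mapsto(\k[G]\otimes_{\k[X]}\cF(X))^G$ and the fibre functor $\cF\mapsto\iota^*\cF=\cF(X)/\m_{x_0}\cF(X)$. Since a functor naturally isomorphic to an equivalence is again an equivalence, and the same quasi-inverse $M\mapsto\pi_*^H(\Oc_G\otimes_{\underline{\k}}\underline{M})$ works, only two things need checking: the natural isomorphism itself, and the compatibility of the monoidal structures.

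First I would establish the natural isomorphism $\iota^*\simeq\eta_*^G\pi^*$ on $\Coh^G(X)$, following the computation given just before the statement. Write $j:\{\ast\}\to G$ for the unit, so that $\iota=\pi\circ j$ and $\eta\circ j=\Id$. Then $\iota^*\pi_*^H\eta^*=j^*\pi^*\pi_*^H\eta^*\simeq j^*\eta^*=\Id$, where the middle step uses the counit isomorphism $\pi^*\pi_*^H\simeq\Id$ from descent \eqref{eq:equiv1}. Hence $\iota^*$ is a left inverse of the equivalence $\pi_*^H\eta^*$. Since $\pi_*^H\eta^*$ is an equivalence with quasi-inverse $\eta_*^G\pi^*$, we obtain $\iota^*\simeq\iota^*(\pi_*^H\eta^*)(\eta_*^G\pi^*)\simeq\eta_*^G\pi^*$.

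One point needs care here: $\iota^*\cF$ must carry the correct $H$-module structure. The map $\iota$ is $H$-equivariant because $x_0=\pi(1_G)$ is fixed by $H$, so $\iota^*$ genuinely lands in $\Coh^H(\{\ast\})=H\lmod_{\fd}$. The isomorphisms above are then all isomorphisms of $H$-equivariant objects, because $j$ is $H$-equivariant for the conjugation action and the descent isomorphisms are $H$-equivariant by construction. Finally, since $X$ and $\{\ast\}$ are affine, $\iota^*\cF(\{\ast\})$ is the naive fibre $\cF(X)/\m_{x_0}\cF(X)$.

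For monoidality, I would note that $\iota^*$ is a pullback, hence strong monoidal via the canonical isomorphisms $\iota^*(\cF\otimes\cG)\simeq\iota^*\cF\otimes\iota^*\cG$. Concretely, $(F\otimes_{\k[X]}G)/\m_{x_0}\simeq F/\m_{x_0}\otimes_\k G/\m_{x_0}$, with $H$ acting diagonally. The quasi-inverse was already given a monoidal structure in Proposition~\ref{p:Qcoh=Hmod}.

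The main obstacle is checking that the isomorphism $\iota^*\simeq\eta_*^G\pi^*$ is monoidal, rather than merely natural. I would handle this by observing that every step in the chain is built from pullbacks and descent isomorphisms ($j^*$, $\pi^*$, $\eta^*$ and the unit and counit of \eqref{eq:equiv1} and \eqref{eq:equiv2}), each of which is compatible with tensor products. Alternatively, one can bypass the comparison entirely: $\iota^*$ is monoidal and naturally isomorphic to an equivalence, hence an equivalence, and by \cite[Remark 2.4.10]{EGNO} its quasi-inverse inherits a monoidal structure.
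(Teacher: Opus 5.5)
Your proposal is correct and follows the paper's own argument. The paper likewise observes that $\iota^*\pi_*^H\eta^*=j^*\pi^*\pi_*^H\eta^*\simeq j^*\eta^*=\Id$ and deduces $\iota^*\simeq\eta_*^G\pi^*$. Your way of writing that last step, $\iota^*\simeq\iota^*(\pi_*^H\eta^*)(\eta_*^G\pi^*)\simeq\eta_*^G\pi^*$, is if anything more direct, and your remarks on the $H$-equivariance of $\iota$ and $j$ and on monoidality via \cite[Remark 2.4.10]{EGNO} fill in points the paper leaves implicit.

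The one caveat is that identifying $\iota^*\cF$ with $\cF(X)/\m_{x_0}\cF(X)$ uses affineness of $X$, which is not a hypothesis of this corollary; for general $X$ that expression should be read as the fibre of $\cF$ at $x_0$. The paper makes the same identification and only applies it in the affine case.
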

This result can also be found as \cite[Theorem 2.7, Corollary 2.10]{CPS}.

Combining this corollary with \eqref{eq:mod=qcoh} then yields the following result when $X$ is affine.

\begin{Corollary}\label{c:kXmod=Hmodv2}
    Suppose that $X=G/H$ is affine. Then there exists an equivalence of monoidal categories
    \begin{equation}\label{eq:kXmodG=Hmodv2}
    \k[X]\lmod^G_{\fg}\isoto H\lmod_{\fd},\qquad V\mapsto V/\m_0 V,  
    \end{equation} with quasi-inverse $M\mapsto (\k[G]\otimes M)^H$. $\hfill\qed$
\end{Corollary}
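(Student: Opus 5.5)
The plan is to combine the equivalence \eqref{eq:mod=qcoh} for the affine $G$-scheme $X=G/H$ with Corollary~\ref{c:kXmod=Hmod}, and then identify both the functor and its quasi-inverse in purely algebraic terms.

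First, since $X$ is affine, \eqref{eq:mod=qcoh} gives a monoidal equivalence $\k[X]\lmod^G_{\fg}\isoto\Coh^G(X)$, $V\mapsto\widetilde V$, with quasi-inverse $\cF\mapsto\cF(X)$. Composing with the monoidal equivalence of Corollary~\ref{c:kXmod=Hmod}, $\cF\mapsto\cF(X)/\m_{x_0}\cF(X)$, we obtain a monoidal equivalence $\k[X]\lmod^G_{\fg}\to H\lmod_{\fd}$. On objects this sends $V\mapsto\widetilde V(X)/\m_{x_0}\widetilde V(X)=V/\m_0V$, where $\m_0=\m_{x_0}$ is the maximal ideal of $x_0=\pi(1_G)$. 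A composition of monoidal equivalences is again a monoidal equivalence, with the composed structure maps $J$ and $\phi$. On the $H$-module structure, $V/\m_0V$ is exactly $\iota^*\widetilde V$, and $H$ acts on it because $H$ fixes $x_0$ and hence stabilises $\m_0$.

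Next we identify the quasi-inverse. By Corollary~\ref{c:kXmod=Hmod}, it sends $M$ to $\pi_*^H(\Oc_G\otimes_{\underline\k}\underline M)$. Its global sections are the $H$-invariants $(\k[G]\otimes M)^H$, taken for the diagonal action with $H$ acting on $\k[G]$ by right translation. This holds because $\pi_*$ followed by global sections on the affine $X$ equals global sections on $G$, namely $\k[G]\otimes M$, and then taking $H$-invariants commutes with global sections. The $\k[X]=\k[G]^H$-module and $G$-module structures come from left translation on the factor $\k[G]$. Applying the quasi-inverse of \eqref{eq:mod=qcoh} (global sections) gives the stated formula.

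The main point to check is that $(\k[G]\otimes M)^H$ really is finitely generated over $\k[X]$, so that it lands in $\k[X]\lmod^G_{\fg}$. This is automatic: it is the global sections of a coherent sheaf on the affine scheme $X$, and coherence is guaranteed by Corollary~\ref{c:kXmod=Hmod}. It is also worth noting that the equality $\k[X]=\k[G]^H$ uses the affineness of $G/H$ (\cite[I.5.6]{JanRAGS}). With these points in place, the corollary follows formally.
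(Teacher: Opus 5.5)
Your proposal is correct and is essentially the paper's argument. The paper obtains this corollary simply by composing the affine equivalence \eqref{eq:mod=qcoh} with the monoidal equivalence of Corollary~\ref{c:kXmod=Hmod}; your identification of the global sections of $\pi_*^H(\Oc_G\otimes_{\underline{\k}}\underline{M})$ with $(\k[G]\otimes M)^H$, using $\k[X]=\k[G]^H$ for affine $X$, just makes explicit a step the paper leaves implicit.
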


\subsection{Application to Harish-Chandra bimodules}
\label{ss:applicationtoHCbimodules}

Let $G$ be a connected standard reductive group as in Subsection~\ref{ss:reductivegroupsandLiealgebras}, and let $\chi\in\g^*$. Write $F=F_{\g^*}$ for the Frobenius morphism on $\g^*$, and let $G^{F(\chi)}$ be as defined in Subsection~\ref{ss:stabilisersoftwistedcharacters}. By \cite[I.5.6(2)]{JanRAGS} we may identify $G/G^{F(\chi)}$ with $\O^{(1)}$, a subscheme of $(\g^*)^{(1)}$ (see also \cite[Proposition I.9.5]{JanRAGS} and \cite[\textsection 2]{JaNO}). 

Let $I_{\o \O}\lhd Z_p(\g)$ be the defining ideal of $\o \O^{(1)}$ in $Z_p(\g)\cong \k[(\g^*)^{(1)}]$. Then $Z_p(\g)/I_{\o \O}=\k[\o \O^{(1)}]$ and therefore $U_{\o \O}(\g)$ is a finite $\k[\o \O^{(1)}]$-algebra. In fact, since $\o \O$ is $G$-stable we obtain that $U_{\o \O}(\g)$ is an algebra in the category $\k[\o \O^{(1)}]\lmod^G_{\fg}$. Under the monoidal equivalence \eqref{eq:mod=qcoh}, the algebra $U_{\o \O}(\g)$ corresponds to an algebra in the category $\Coh^G(\o \O^{(1)})$. We denote this resulting sheaf of algebras by $\U_{\o \O}(\g).$ Then \eqref{eq:locofAmod} yields an equivalence of categories 
\begin{equation}\label{eq:locofUmod}
    U_{\o \O}(\g)\lmod^G_{\fg}\isoto \U_{\o \O}(\g)\lmod^G_{\fg}
\end{equation}
Writing $\varepsilon:\O^{(1)}\hookrightarrow \cO^{(1)}$ for the natural $G$-equivariant embedding, we further make the definition $$\U_\O(\g):=\varepsilon^*\U_{\o \O}(\g).$$ This is an algebra in $\Coh^G(\O^{(1)})$.
\begin{Proposition}\label{P:MainEquiv}
    There exists an equivalence of categories 
    \begin{equation}\label{eq:descUg}
        \U_\O(\g)\lmod^G_{\fg}\isoto U_\chi(\g)\lmod^{G^{F(\chi)}}_{\fg}.
    \end{equation}
\end{Proposition}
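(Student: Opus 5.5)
The plan is to apply Lemma~\ref{L:Fmodismod} to the monoidal equivalence of Corollary~\ref{c:kXmod=Hmod}, taking $X = \O^{(1)} \cong G/G^{F(\chi)}$, $H = G^{F(\chi)}$ and base point $x_0 = F(\chi)$. Corollary~\ref{c:kXmod=Hmod} gives a monoidal equivalence
\[
\Phi \colon \Coh^G(\O^{(1)}) \isoto G^{F(\chi)}\lmod_{\fd}, \qquad \cF \mapsto \cF(\O^{(1)})/\m_{F(\chi)}\cF(\O^{(1)}).
\]
Since $\U_\O(\g)$ is an algebra in $\Coh^G(\O^{(1)})$, Lemma~\ref{L:Falgisalg} makes $\Phi(\U_\O(\g))$ an algebra in $G^{F(\chi)}\lmod_{\fd}$. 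Lemma~\ref{L:Fmodismod}(3) then gives an equivalence between $\U_\O(\g)$-modules in $\Coh^G(\O^{(1)})$, which is exactly $\U_\O(\g)\lmod^G_{\fg}$ by the discussion before \eqref{eq:locofAmod}, and $\Phi(\U_\O(\g))$-modules in $G^{F(\chi)}\lmod_{\fd}$. The latter are $G^{F(\chi)}$-equivariant $\Phi(\U_\O(\g))$-modules that are finite dimensional, equivalently finitely generated.

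The main step is to identify $\Phi(\U_\O(\g))$ with $U_\chi(\g)$ as algebras with $G^{F(\chi)}$-action. I proceed as follows.
\begin{enumerate}
\item By definition $\U_\O(\g) = \varepsilon^*\U_{\o\O}(\g)$. Taking the fibre at $F(\chi) \in \O^{(1)} \subseteq \o\O^{(1)}$ commutes with the pullback $\varepsilon^*$.
\item The fibre is therefore $U_{\o\O}(\g)/\m_{F(\chi)}U_{\o\O}(\g)$, where $\m_{F(\chi)} \supseteq I_{\o\O}$ is the maximal ideal of $F(\chi)$ in $Z_p(\g)$.
\item This quotient equals $U(\g)/\m_{F(\chi)}U(\g)$. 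Under $Z_p(\g) \cong \k[(\g^*)^{(1)}]$, the ideal $\m_{F(\chi)}$ is generated by the elements $x^p - x^{[p]} - \chi(x)^p$, which is $I_\chi$. Hence the quotient is $U_\chi(\g)$.
\end{enumerate}
I must also check two compatibilities. First, the algebra structure produced by Lemma~\ref{L:Falgisalg} via $J$ and $\phi$ should be the ordinary quotient algebra structure; this holds because $\Phi$ is literally the base change $-\otimes_{\k[\O^{(1)}]}\k$, whose monoidal structure is the canonical isomorphism. Second, the $G^{F(\chi)}$-action on the fibre should be the restriction of the adjoint $G$-action on $U(\g)$; this holds since the fibre functor $\iota^*$ uses the $H$-equivariant inclusion of the base point.

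I expect the main obstacle to be making this second compatibility precise. The equivalence in Corollary~\ref{c:kXmod=Hmod} was obtained abstractly through descent, via $\pi^*$ and $\eta_*^G$, and then identified with $\iota^*$ only up to natural isomorphism. To be safe I would bypass this by working with $\iota^*$ directly. It is a monoidal functor, since pullback is monoidal, and it is an equivalence because it is isomorphic to the composite of the equivalences \eqref{eq:equiv1} and \eqref{eq:equiv2}. So Lemma~\ref{L:Fmodismod} applies to $\iota^*$ itself, and the computation in steps 1–3 is then a direct identification of $\iota^*\U_\O(\g)$.

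Finally, the finite generation conditions match. Coherent $\U_\O(\g)$-modules go to finite-dimensional modules, and a finitely generated $U_\chi(\g)$-module is finite dimensional because $U_\chi(\g)$ is finite dimensional. This yields \eqref{eq:descUg}.
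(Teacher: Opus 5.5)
Your proposal is correct and is essentially the paper's proof. Like the paper, you apply the monoidal equivalence \eqref{eq:qc=Hmodv2} with $X=\O^{(1)}\cong G/G^{F(\chi)}$, observe that it sends $\U_\O(\g)$ to $U_\chi(\g)$, and conclude with Lemma~\ref{L:Fmodismod}. Your extra checks (working with $\iota^*$ directly, identifying the fibre as $U(\g)/I_\chi U(\g)$ with the restricted adjoint $G^{F(\chi)}$-action, and matching finite generation with finite dimensionality) only make explicit what the paper's two-line proof leaves implicit.
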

\begin{proof}
    Applying the monoidal equivalence \eqref{eq:qc=Hmodv2} to $\U_\O(\g)\in\Coh^G(\O^{(1)})$ yields $U_\chi(\g)\in G^{F(\chi)}\lmod_{\fg}$. Therefore, the desired equivalence is a consequence of Lemma~\ref{L:Fmodismod}.
\end{proof}

\begin{Corollary}\label{c:locofUXmod}
    Suppose that $\O$ is a closed $G$-orbit in $\g^*$. Then there exists an equivalence of categories 
    \begin{equation}\label{eq:mainequiv}
    U_\O(\g)\lmod^G_{\fg}\isoto U_\chi(\g)\lmod^{G^{F(\chi)}}_{\fg}.
    \end{equation}
\end{Corollary}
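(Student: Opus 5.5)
When $\O$ is closed, $\overline{\O}=\O$, so the corollary should be a composition of two equivalences already in hand. The first is the localisation equivalence \eqref{eq:locofUmod},
\[
U_{\o\O}(\g)\lmod^G_{\fg}\isoto \U_{\o\O}(\g)\lmod^G_{\fg}.
\]
The second is the descent equivalence of Proposition~\ref{P:MainEquiv},
\[
\U_\O(\g)\lmod^G_{\fg}\isoto U_\chi(\g)\lmod^{G^{F(\chi)}}_{\fg}.
\]
The argument has three steps: identify the left-hand sides, compose, and check that the resulting functor is the one we want.

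First we check that $U_{\o\O}(\g)=U_\O(\g)$. Since $\O$ is Zariski closed (equivalently, semisimple by Lemma~\ref{L:Zariskiclosedsemisimple}), its Frobenius twist $\O^{(1)}$ is closed in $(\g^*)^{(1)}$. Its reduced closure is therefore $\O^{(1)}$ itself, and $I_{\o\O}=I_\O$ is the radical ideal of this orbit. It follows that $\varepsilon:\O^{(1)}\to\o\O^{(1)}$ is the identity. Hence $\U_\O(\g)=\varepsilon^*\U_{\o\O}(\g)=\U_{\o\O}(\g)$, and the two module categories of sheaves coincide.

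Second, we compose the two equivalences to obtain \eqref{eq:mainequiv}. The only hypothesis to confirm is that Proposition~\ref{P:MainEquiv} genuinely applies: its proof uses Corollary~\ref{c:kXmod=Hmod} with $X=\O^{(1)}\cong G/G^{F(\chi)}$. Because $\O$ is closed in the affine space $\g^*$, the scheme $X$ is affine. This affineness is also what makes \eqref{eq:mod=qcoh} available, so that we can pass back from sheaves to modules.

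Finally, we want to make the composite explicit, since the later sections use it. By Corollary~\ref{c:kXmodG=Hmodv2}, the composite should simply be $M\mapsto M/I_\chi M$, with quasi-inverse $N\mapsto(\k[G]\otimes N)^{G^{F(\chi)}}$. I expect the main obstacle to be precisely this bookkeeping: checking that, under \eqref{eq:qc=Hmodv2}, the fibre of $\U_\O(\g)$ at $F(\chi)$ is $U(\g)/I_\chi U(\g)=U_\chi(\g)$, with the $G^{F(\chi)}$-action induced from the adjoint action. This holds because $\m_{F(\chi)}U_\O(\g)$ is exactly the image of $I_\chi U(\g)$. Once that identification is made, the rest is formal and follows from Lemma~\ref{L:Fmodismod}.
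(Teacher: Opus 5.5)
Your proposal is correct and is essentially the paper's proof: because $\O$ is closed, $\varepsilon$ is the identity, so $\U_\O(\g)=\U_{\o\O}(\g)$, and the corollary is the composite of \eqref{eq:locofUmod} with Proposition~\ref{P:MainEquiv}. Your affineness check is harmless but does no work, since \eqref{eq:locofUmod} holds for any orbit closure and Proposition~\ref{P:MainEquiv} is stated for arbitrary $\O$; the only input is $\varepsilon=\Id$.
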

\begin{proof}
    Since $\O$ is closed, the embedding $\varepsilon$ is the identity map. The result follows by combining \eqref{eq:locofUmod} and Proposition~\ref{P:MainEquiv}.
\end{proof}

If $\O$ is not closed in $\g^*$ then $\varepsilon^*:\Coh^G(\cO^{(1)})\to \Coh^G(\O^{(1)})$ only induces a functor $\o\varepsilon^*:\U_{\cO}(\g)\lmod^G_{\fg}\to \U_\O(\g)\lmod^G_{\fg}$. Denote by $\cK$ the kernel of this functor, i.e. the Serre subcategory of $\U_{\o \O}(\g)\lmod^G_{\fg}$ generated by those objects $\cF$ such that $\o\varepsilon^*\cF\cong 0$. We then have the following result.

\begin{Proposition}
\label{P:generalequivalencebydescent}
    The exact functor $\varepsilon^*$ induces an equivalence of categories $$\frac{\U_{\o \O}(\g)\lmod^G_{\fg}}{\cK}\isoto \U_\O(\g)\lmod^G_{\fg}.$$
\end{Proposition}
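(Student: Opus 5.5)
The plan is to deduce the statement from the general principle that an exact functor with a fully faithful right adjoint induces an equivalence from the Serre quotient by its kernel (Gabriel's localisation theory). We therefore need three things: $\o\varepsilon^*$ is exact, it has a right adjoint, and the counit $\o\varepsilon^*\o\varepsilon_*\to\Id$ is an isomorphism.

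First, exactness. The $G$-orbit $\O^{(1)}$ is open in its closure $\cO^{(1)}$, so $\varepsilon$ is an open immersion, hence flat. Thus $\varepsilon^*$ is simply restriction to an open subscheme, which is exact on $\Coh^G(\cO^{(1)})$. Exactness is preserved after passing to $\U$-modules via Lemma~\ref{L:Fmodismod}, since $\varepsilon^*$ is monoidal and kernels and cokernels of module maps are computed on the underlying sheaves. The kernel $\cK$ consists of equivariant $\U_{\cO}(\g)$-modules supported on the boundary $\cO^{(1)}\setminus\O^{(1)}$, and it is a Serre subcategory.

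Second, the adjoint. The natural candidate is $\varepsilon_*$. Pushforward along an open immersion need not preserve coherence, so I would work in quasi-coherent equivariant modules. There $\varepsilon_*$ is right adjoint to $\varepsilon^*$, and $\varepsilon^*\varepsilon_*\cF\cong\cF$ because $\varepsilon$ is an open immersion. For any coherent $\cF$ on $\O^{(1)}$, I would take the largest coherent $\U_{\cO}(\g)$-submodule of $\varepsilon_*\cF$ restricting to $\cF$. Such coherent extensions exist equivariantly, for example as the $G$-stable, $\U$-stable subsheaf generated by a coherent $\Oc$-subsheaf extending $\cF$; this works because $\U_{\cO}(\g)$ is coherent over $\Oc_{\cO^{(1)}}$. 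This shows $\o\varepsilon^*$ is essentially surjective.

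Third, fullness on Hom spaces. Given $\cF,\cG$ in $\U_{\cO}(\g)\lmod^G_{\fg}$ and a map $f:\varepsilon^*\cF\to\varepsilon^*\cG$, adjunction gives $\cF\to\varepsilon_*\varepsilon^*\cG$. Restricting to a coherent subsheaf $\cF'\subseteq\cF$ with $\cF/\cF'\in\cK$, I would choose $\cF'$ so that its image lands in a coherent submodule $\cG'$ of $\varepsilon_*\varepsilon^*\cG$. I would then compare $\cG'$ with the image of $\cG$, noting that the kernel and cokernel of $\cG\to\cG+\cG'$ lie in $\cK$. This exhibits $f$ as a morphism in the quotient category. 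Faithfulness holds because a morphism killed by $\varepsilon^*$ has image in $\cK$.

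I expect the main obstacle to be the coherence bookkeeping, namely showing that the relevant coherent extensions can be chosen $G$-equivariantly and $\U$-stably. This is what forces the Serre-quotient formulation in place of an honest adjoint on coherent objects. Equivalently, one can invoke the standard fact (e.g.\ Gabriel, or \cite[Lemma~1.17.16.4]{St}-type results) that restriction to an open subscheme identifies coherent sheaves on the open part with the quotient of coherent sheaves by those supported on the complement. One then upgrades this to $G$-equivariant $\U$-modules using the monoidal compatibility from Lemma~\ref{L:Fmodismod}.
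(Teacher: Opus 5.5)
Your argument is correct, but it reaches the result by a different route from the paper.

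\textbf{The paper's route.} The paper also uses Gabriel's criterion (an exact functor with a fully faithful right adjoint induces an equivalence from the Serre quotient by its kernel), but it builds that right adjoint on the coherent level. It asserts that $\varepsilon_*$ does preserve coherence in this situation, because $\k[\O^{(1)}]$ is a finitely generated $\k[\cO^{(1)}]$-module; it quotes this finiteness from Jantzen's notes on nilpotent orbits. It then uses the lax monoidal structure of $\varepsilon_*$ and the unit of adjunction to lift $\varepsilon_*$ to a functor $\o\varepsilon_*$ on equivariant $\U$-modules. This $\o\varepsilon_*$ is right adjoint to $\o\varepsilon^*$, with counit $\o\varepsilon^*\o\varepsilon_*\to\Id$ an isomorphism.

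\textbf{Your route.} You deliberately avoid that finiteness input. You work with the quasi-coherent adjunction and check directly that the induced functor on $\U_{\cO}(\g)\lmod^G_{\fg}/\cK$ is faithful, full and essentially surjective. This is the classical proof that coherent sheaves on an open subscheme form the quotient of coherent sheaves by those supported on the complement.

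\textbf{What each buys.} Your version is more robust: it applies to any $G$-stable open subscheme and needs no geometric information about $\cO$. The paper's version is shorter once the finiteness is granted, and it gives more: an explicit fully faithful section functor $\o\varepsilon_*$ that realises the quasi-inverse.

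\textbf{Presentation points.}
\begin{enumerate}
\item The three-step plan you announce (exactness, a right adjoint, counit an isomorphism) is not what you carry out, since you never produce a right adjoint on coherent objects. State the criterion you actually use: an exact functor whose induced functor on the quotient by its kernel is full and essentially surjective is an equivalence. Faithfulness of the induced functor is automatic.
\item A ``largest'' coherent submodule of $\varepsilon_*\cF$ restricting to $\cF$ need not exist. You only need some coherent one, for example the submodule generated by finitely many global sections that generate $\cF$ on the quasi-affine $\O^{(1)}$.
\item Making that submodule $G$-stable uses local finiteness of the rational $G$-module $\Gamma(\O^{(1)},\cF)$, in addition to coherence of $\U_{\cO}(\g)$.
\item In the fullness step you do not need to shrink $\cF$. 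The image of $\cF$ in $\varepsilon_*\varepsilon^*\cG$ is already finitely generated, and the comparison map $\cG\to\cG+\cG'$ already has kernel and cokernel in $\cK$.
\end{enumerate}
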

\begin{proof}
    By \cite[Proposition III.2.5]{Ga}, it suffices to show that the exact functor $\o\varepsilon^*:\U_{\cO}(\g)\lmod^G_{\fg}\to \U_\O(\g)\lmod^G_{\fg}$ admits a fully faithful right adjoint. Note that since $\k[\O^{(1)}]$ is a finitely generated $\k[\cO^{(1)}]$-module (see \cite[Proposition 8.3]{JaNO}), the push-forward functor $\varepsilon_*:\QCoh^G(\O^{(1)})\to \QCoh^G(\cO^{(1)})$ preserves coherence. In particular, $\varepsilon_*:\Coh^G(\O^{(1)})\to\Coh^G(\cO^{(1)})$ is a fully faithful right adjoint to $\varepsilon^*$, and while it is not monoidal it does come equipped with a natural transformation $$\hat{J}:\varepsilon_*(-)\otimes_{\Oc_{\cO^{(1)}}}\varepsilon_*(-)\to \varepsilon_*(-\otimes_{\Oc_{\O^{(1)}}}-)$$ which satisfies the conditions of a monoidal structure action. Furthermore, the unit $\eta:\Id\to \varepsilon_*\varepsilon^*$ and counit $\gamma:\varepsilon^*\varepsilon_*\to \Id$ of adjunction satisfy the commutative diagram required for a natural transformation of monoidal functors. 
    
    In particular, given $(\cF,\rho)\in\U_\O(\g)\lmod^G_{\fg}$ we may form a $\U_{\cO}(\g)$-module $\varepsilon_*\cF$ with module structure given by $$\U_\cO(\g)\otimes_{\Oc_{\cO^{(1)}}}\varepsilon_*\cF\xrightarrow{\eta_{\U_{\tiny\cO}(\g)}\otimes 1} \varepsilon_* \varepsilon^* \U_\cO(\g)\otimes_{\Oc_{\cO^{(1)}}}\varepsilon_*\cF \xrightarrow{\hat{J}_{\varepsilon^*\U_{\tiny\cO}(\g),\cF}} \varepsilon_*(\U_\O(\g)\otimes_{\Oc_{\O^{(1)}}}\cF)\xrightarrow{\varepsilon_*(\rho)} \varepsilon_*\cF.$$ This defines a functor $$\o\varepsilon_*:\U_\O(\g)\lmod^G_{\fg}\to \U_\cO(\g)\lmod^G_{\fg}.$$ One can easily check that this functor is right adjoint to $\o\varepsilon^*$ and is fully faithful (or, equivalently, that the counit of adjunction $\o\varepsilon^*\o\varepsilon_*\to \Id$ is an isomorphism). This proves the result.
\end{proof}

We conclude this subsection with two alternative descriptions of the kernel $\cK$. To state it, let us define the boundary of $\cO$ by $\delta\cO:=\cO\setminus \O$. This is a $G$-stable, closed subvariety in $\g^*$. Write $\omega:\delta \cO^{(1)} \hookrightarrow \cO^{(1)}$ for the natural inclusion; this defines a functor $\omega_*:\Coh^G(\delta\cO^{(1)})\to \Coh^G(\cO^{(1)})$ and a functor $\o\omega_*:\U_{\delta\cO}(\g)\lmod^G_{\fg}\to \U_{\cO}(\g)\lmod^G_{\fg}$.

\begin{Lemma}
\label{L:tiptop}
    The following three Serre subcategories of $\U_\cO(\g)\lmod^G_{\fg}$ coincide:
    \begin{enumerate}
        \item $\cK$, the kernel of $\o\varepsilon^*$.
        \item The full subcategory consisting those of $\cF$ supported in $\delta\cO^{(1)}.$
        \item The Serre subcategory generated by objects in the essential image of $\o\omega_*$. 
    \end{enumerate}
\end{Lemma}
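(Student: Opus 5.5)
We show $(1)=(2)$ and $(2)=(3)$. Throughout we use that $\cO^{(1)}$ is affine and that $\varepsilon$ is an open embedding: $\O$ is open in its closure, being an orbit of an algebraic group. By \eqref{eq:locofUmod} we may therefore work with finitely generated $G$-equivariant $U_{\cO}(\g)$-modules $V$, and with their underlying coherent sheaves on $\cO^{(1)}$.

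\emph{Step 1: $(1)=(2)$.} Because $\varepsilon$ is an open immersion, $\o\varepsilon^*\cF$ is just the restriction $\cF|_{\O^{(1)}}$ of the underlying $\Oc_{\cO^{(1)}}$-module, with its inherited $\U_\O(\g)$-action. This restriction vanishes exactly when every stalk at a point of $\O^{(1)}$ vanishes, that is, when $\Supp\cF\subseteq\cO^{(1)}\setminus\O^{(1)}=\delta\cO^{(1)}$. The category in (2) is already a Serre subcategory, since support is additive on short exact sequences, so this identification matches the categories as stated. If (2) is instead meant scheme-theoretically, we note that the set-theoretic and scheme-theoretic notions give the same Serre closure, because any module set-theoretically supported on $\delta\cO^{(1)}$ is killed by a power of $I_{\delta\cO}$.

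\emph{Step 2: $(3)\subseteq(2)$.} Objects in the essential image of $\o\omega_*$ are annihilated by the ideal $I_{\delta\cO}$, so they are supported in $\delta\cO^{(1)}$. The category (2) is a Serre subcategory, so it contains the Serre subcategory these objects generate.

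\emph{Step 3: $(2)\subseteq(3)$.} This is the main point. Let $V$ be a finitely generated $U_{\cO}(\g)$-module supported in $\delta\cO^{(1)}$. The ideal $J:=I_{\delta\cO}/I_{\cO}\subseteq\k[\cO^{(1)}]$ is $G$-stable, and by the Nullstellensatz applied to the finitely generated module $V$ we have $J^nV=0$ for some $n$. This gives a finite filtration
\[
V\supseteq JV\supseteq J^2V\supseteq\cdots\supseteq J^nV=0 .
\]
Each $J^iV$ is a $U_{\cO}(\g)$-submodule because $J$ is central. Each $J^iV$ is also $G$-stable, because $J$ is $G$-stable and the action map is $G$-equivariant. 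Hence every subquotient $J^iV/J^{i+1}V$ is a finitely generated $G$-equivariant module over $U_{\cO}(\g)/JU_{\cO}(\g)=U_{\delta\cO}(\g)$. Such a subquotient is therefore $\o\omega_*$ of an object of $\U_{\delta\cO}(\g)\lmod^G_{\fg}$. Consequently $V$ is an iterated extension of objects in the essential image of $\o\omega_*$, and so it lies in the Serre subcategory those objects generate.

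The only genuinely delicate point is the convention about reducedness: we must check that $\delta\cO$ is given the reduced structure, and that $\o\omega_*$ matches the quotient by $J$ under \eqref{eq:locofUmod}. The filtration argument in Step 3 handles the non-reducedness issue regardless of which convention is used.
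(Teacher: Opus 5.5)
Your proof is correct and is essentially the paper's argument: the key inclusion (2)$\subseteq$(3) is proved there by the same filtration of $V$ by powers of the boundary ideal, whose subquotients are $G$-equivariant $U_{\delta\cO}(\g)$-modules. The difference is only organisational: the paper runs the cycle (1)$\subseteq$(2)$\subseteq$(3)$\subseteq$(1), using fibres at points of $\O^{(1)}$ plus Nakayama for (1)$\subseteq$(2) and $\varepsilon^*\omega_*=0$ for (3)$\subseteq$(1), whereas you get (1)$=$(2) in one step from $\varepsilon$ being an open immersion, which is slightly cleaner.
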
 

\begin{proof}
    Let us denote the Serre subcategories listed in the statement of the lemma by $\H_1$, $\H_2$ and $\H_3$ respectively. We prove that each object in $\H_1$ lies in $\H_2$, that each object of $\H_2$ lies in $\H_3$, and that each object in $\H_3$ lies in $\H_1$.

    {\it Step 1:} Each object in  $\H_1$ lies in $\H_2$.

    Let $\cF\in\H_1$, so $\o\varepsilon\cF\iso0$; in particular, this means $\varepsilon^*\cF=0$. For $x\in \O^{(1)}$, define $i_x:\{*\}\to \O^{(1)}$ to be the morphism sending $*$ to $x$. Then $$0=i_x^*\varepsilon^*\cF(*)=(\varepsilon i_x)^*\cF(*)=\frac{\cF(\cO^{(1)})_{\m_x}}{\m_x\cF(\cO^{(1)})_{\m_x}},$$ where $\m_x$ is the maximal ideal of $\k[\cO^{(1)}]$ defining $x$. By \cite[29.5.3]{St}, this implies $\cF_x=0$ for all $x\in\O^{(1)}$, and therefore that $\cF\in \H_2$.

    {\it Step 2:} Each object in  $\H_2$ lies in $\H_3$.
    
    Let $\cF\in\H_2$. By \cite[29.5.4]{St}, there exists $I\in \k[\cO^{(1)}]$ with $\k[\cO^{(1)}]/\sqrt{I}=\k[\delta\cO^{(1)}]$ such that $\cF(\cO^{(1)})$ is a $G$-equivariant $U_\cO(\g)/I U_\cO(\g)$-module. There exists $k$ such that $\sqrt{I}^k\subseteq I$ and therefore $\cF(\cO^{(1)})$ is a $G$-equivariant $U_\cO(\g)/\sqrt{I}^k U_\cO(\g)$-module. Consider the filtration $$\cF(\cO^{(1)})\supseteq \sqrt{I}\cF(\cO^{(1)})\supseteq \sqrt{I}^2\cF(\cO^{(1)})\supseteq \cdots\supseteq \sqrt{I}^k\cF(\cO^{(1)})=0.$$  Each subquotient is a $G$-equivariant $U_\cO(\g)/\sqrt{I} U_\cO(\g)$-module, i.e. a $G$-equivariant $U_{\delta\cO}(\g)$-module. The claim follows.
    
    {\it Step 3:} Each object in  $\H_3$ lies in $\H_1$.

    Since $\cK$ is a Serre subcategory of $\U_\cO(\g)\lmod^G_{\fg}$, it suffices to show that $\o\omega_*\cF\in\H_1$ for all $\cF\in\U_{\delta\cO}(\g)\lmod^G_{\fg}$. For this, we need only show that $\varepsilon^*\omega_*:\Coh^G(\delta\cO^{(1)})\to \Coh^G(\O^{(1)})$ is zero, which is an easy calculation. 
\end{proof}

\section{An equivariant Friedlander-Parshall equivalence}
\label{S:centralreductions}

In Section~\ref{ss:applicationtoHCbimodules} we effectively showed that the study of simple Harish-Chandra bimodules can be reduced to the study of simple objects in $U_\chi(\g)\lmod^{G^{F(\chi)}}$ for $\chi\in\g^*$ semisimple. In this section, we reduce the discussion further to the setting of restricted Harish-Chandra bimodules over a Levi subalgebra. The reduction largely follows the approach of Friedlander--Parshall \cite[Theorem~3.2]{FP},  upgrading their celebrated category equivalence to the setting of equivariant bimodules, see also \cite[\textsection 7.4]{JaLA} for a nice exposition.

Although we only need these results when $\chi$ is semisimple, we in fact prove them for arbitrary $\chi$ since the arguments are not substantially more complicated.

\subsection{Equivariant reduction to nilpotent $p$-characters}
\label{ss:equivariantreductiontonilpotent}

Let $\chi \in \g^*$, with Levi decomposition $\chi = \chi_\rs + \chi_\rn$, and let $H \subseteq G^\chi$ be a closed subgroup. As noted in Lemma~\ref{L:Zariskiclosedsemisimple}, $G^{\chi_\rs}$ is a Levi subgroup of $G$ and so we can choose a parabolic subgroup $P \subseteq G$ containing $G^{\chi_\rs}$ as a Levi factor. We denote the unipotent radical of $P$ by $U$ and write $\p = \g^{\chi_\rs} \oplus \u$ for the corresponding Lie algebras -- we note that $\g^{\chi_\rs} = \Lie G^{\chi_\rs}$ by \cite[2.2(4)]{JaNO}.

Recall axiom (H3) from Section~\ref{ss:reductivegroupsandLiealgebras}, which states that the adjoint $G$-module is self-dual. Hence there exists a $G$-equivariant isomorphism $\g \to \g^*$ which induces a non-degenerate $G$-invariant form $\kappa : \g \times \g \to \k$. Suppose that $\chi = \kappa(x, \cdot)$ for $x\in \g$. Then $\kappa|_{\g^\chi \times\g^\chi}$ is non-degenerate whilst $\u$ lies in non-degenerate pairing with the nilradical of the parabolic opposite to $\p$ (see \cite[6.6]{JaLA}, for example). Since $x\in \g^x = \g^\chi$ it follows that $\chi(\mathfrak{u}) = 0$.

We can extend any $U_\chi(\g^{\chi_s})$-module to a $U_\chi(\mathfrak{p})$-module, letting $\u$ act trivially. For $M \in U_\chi(\g)\lmod$ we write $M^\u$ for the $\u$-invariants.

We claim that the following functors are well defined
\begin{equation} \label{eq: FP equivalence 1 H}
    \begin{array}{rclcrcl}
    U_\chi(\g)\lmod_{\fg}^H & \to & U_\chi(\g^{\chi_s})\lmod_{\fg}^H & & M & \mapsto &M^{\u}\vspace{8pt} \\
    U_\chi(\g^{\chi_s})\lmod_{\fg}^H & \to & U_\chi(\g)\lmod_{\fg}^H & &  N & \mapsto & \Ind(N) := U_\chi(\g) \otimes_{U_\chi(\mathfrak{p})} N.
    \end{array}
\end{equation}

If $H$ is the trivial group then these functors are just the ones considered by Friedlander--Parshall \cite{FP}. Since $H \subseteq G^\chi \subseteq P$ and $P$ acts on $\u$ it follows that $M^\u$ is $H$-stable, so that the first functor in \eqref{eq: FP equivalence 1 H} is well-defined.

The action of $H$ on $U_\chi(\g) \otimes_{U_\chi(\mathfrak{p})} N$ is diagonal, i.e. $h\cdot (u \otimes n) = (h\cdot u) \otimes (h\cdot n)$, and this is well-defined because $U_\chi(\p)$ is $H$-stable. Now for $h\in H$, $x, y \in U_\chi(\g)$ and $n\in N$, we have
\[
    h \cdot (x \cdot (y \otimes n)) = h \cdot (xy \otimes n) = (h \cdot x)(h \cdot y) \otimes (h \cdot n) = (h \cdot x) \cdot ((h \cdot y) \otimes (h \cdot n))
\]
as required, so the $U_\chi(\g)$-action is $H$-equivariant and the second functor \eqref{eq: FP equivalence 1 H} is well-defined.

\begin{Lemma} \label{thm: FP equivalence}
    If $H \subseteq G^\chi$ is a closed subgroup scheme then the functors \eqref{eq: FP equivalence 1 H} give an equivalence of categories
    \begin{equation}
        U_\chi(\g)\lmod^H_{\fg} \isoto U_\chi(\g^{\chi_s})\lmod^H_{\fg}.
    \end{equation}
\end{Lemma}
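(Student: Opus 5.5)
The plan is to reduce to the non-equivariant Friedlander--Parshall equivalence \cite[Theorem~3.2]{FP} (see also \cite[\textsection 7.4]{JaLA}). By that result, the functors $M \mapsto M^\u$ and $N \mapsto \Ind(N)$ are quasi-inverse equivalences $U_\chi(\g)\lmod_{\fg} \isoto U_\chi(\g^{\chi_s})\lmod_{\fg}$. The unit and counit are the natural maps
\[
N \to \Ind(N)^\u, \quad n \mapsto 1\otimes n, \qquad \Ind(M^\u) \to M, \quad u\otimes m \mapsto um,
\]
and both are isomorphisms. It has already been checked above that both functors in \eqref{eq: FP equivalence 1 H} land in the equivariant categories. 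So it remains to show three things: these two maps are $H$-equivariant, the functors send $H$-equivariant morphisms to $H$-equivariant morphisms, and finite generation is preserved. The last point is contained in the non-equivariant statement.

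\textbf{Step 1: the $U_\chi(\g^{\chi_s})$-structure on $M^\u$.} Since $H \subseteq G^\chi$, it normalises $\p$ and $\u$; this uses that $G^\chi \subseteq G^{\chi_\rs} \subseteq P$, together with the fact that $P$ normalises $\u$. Hence $M^\u$ is an $H$-stable $U_\chi(\p)$-submodule on which $\u$ acts trivially. The $U_\chi(\g^{\chi_s})$-action on it is therefore $H$-equivariant, being a restriction of the $H$-equivariant action map $U_\chi(\g)\otimes M \to M$.

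\textbf{Step 2: the counit $\Ind(M^\u)\to M$.} For $h \in H$ we have
\[
h\cdot(um) = (h\cdot u)(h\cdot m),
\]
by $H$-equivariance of the module structure on $M$. This is exactly the image of $h\cdot(u\otimes m) = (h\cdot u)\otimes (h\cdot m)$, so the counit is $H$-equivariant.

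\textbf{Step 3: the unit $N\to \Ind(N)^\u$.} We have
\[
h\cdot(1\otimes n) = 1\otimes hn,
\]
since $h\cdot 1 = 1$, so the unit is $H$-equivariant as well.

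\textbf{Step 4: functoriality.} If $f$ is an $H$-equivariant morphism, then $f|_{M^\u}$ and $\Id\otimes f$ are clearly $H$-equivariant.

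Since the unit and counit are isomorphisms that are $H$-equivariant, their inverses are $H$-equivariant too. Hence the functors in \eqref{eq: FP equivalence 1 H} are quasi-inverse equivalences of the equivariant categories.

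\textbf{Main obstacle.} The real subtlety is that $H$ is a group scheme, possibly non-reduced (e.g.\ a Frobenius neighbourhood), so the elementwise computations above must be done functorially. I would phrase everything through comodule maps $M\to M\otimes\k[H]$, or equivalently verify the identities for $h \in H(A)$ after base change to an arbitrary commutative $\k$-algebra $A$.

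Two facts need checking in that setting. First, taking $\u$-invariants commutes with the base change $-\otimes A$. This holds because $M^\u$ is the kernel of a $\k$-linear map $M \to \Hom(\u,M)$ with $\u$ finite-dimensional, and $A$ is flat over $\k$. Second, the tensor product over $U_\chi(\p)$ is compatible with the coaction. This follows from right exactness of $-\otimes A$. With these two facts in place, the computations in Steps 2 and 3 go through verbatim for $A$-points.
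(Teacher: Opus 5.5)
Your proposal is correct and takes essentially the same route as the paper. Both arguments check that the natural maps $N \to \operatorname{Ind}(N)^{\mathfrak{u}}$ and $\operatorname{Ind}(M^{\mathfrak{u}}) \to M$ are $H$-equivariant, and then get that they are isomorphisms from the non-equivariant Friedlander--Parshall theorem. The paper also writes out the equivariant adjunction $\operatorname{Ind} \dashv (\cdot)^{\mathfrak{u}}$ explicitly, while your verification on $A$-points of the possibly non-reduced group scheme $H$ spells out a detail the paper leaves implicit.
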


\begin{proof}
    Note that $\Ind$ is left adjoint to $(\cdot)^\u$.
    To see this, take $M \in U_\chi(\g)\lmod^H_{\fg}$ and $N \in U_\chi(\g^{\chi_s})\lmod^H_{\fg}$.
Then we have natural maps 
\begin{equation}
    \label{eq:refme}
    \Hom_{U_\chi(\g^{\chi_\rs})\lmod^H_{\fg}}(N, M^\u) \underset{\eta}{\overset{\theta}{\myrightleftarrows{\rule{1cm}{0cm}}}} \Hom_{U_\chi(\g)\lmod^H_{\fg}}(\Ind(N), M) 
\end{equation}
given by $\theta(f) (u \otimes n) = u \cdot f(n)$ and $\eta(g)(n) = g(1 \otimes n)$. Indeed, a short calculation shows that $\theta(f)$ is $U_\chi(\g)$-equivariant, that $\eta(g)$ is $U_\chi(\g^{\chi_\rs})$-equivariant, and that both are $H$-equivariant. Furthermore, it is straightforward to check that $\theta$ and $\eta$ are mutually inverse linear maps, hence isomorphisms.

    Since the two functors are adjoint, it is enough to show that the corresponding unit and counit maps are natural isomorphisms (see \cite[IV.4 Theorem 1]{MacLane}, for example).  In particular, we need to show that the maps
    \begin{equation*}
        U_\chi(\g) \otimes_{U_\chi(\p)} M^\u \to M \qquad u \otimes m \mapsto u \cdot m,
    \end{equation*}
    and
    \begin{equation*}
        N \to (U_\chi(\g) \otimes_{U_\chi(\p)} N)^\u, \qquad n \mapsto 1 \otimes n
    \end{equation*}
    are isomorphisms for all $M \in U_\chi(\g)\lmod^H_{\fg}$ and $N \in U_\chi(\g^{\chi_s})\lmod^H_{\fg}$. Since these maps are $H$-equivariant is suffices to know that the maps are isomorphisms of finitely generated $U_\chi(\g)$-modules and $U_\chi(\g^{\chi_\rs})$-modules, respectively. The latter follows from \cite[Theorem~3.2]{FP}.
\end{proof}

Now we upgrade the equivalence of Lemma~\ref{thm: FP equivalence} to the setting of Harish-Chandra bimodules. Keep fixed $\chi \in \g^*$, and observe that $\HC_\chi^{G^\chi} U(\g)$ is a full subcategory of $U_\chi (\g) \otimes U_{-\chi}(\g) \lmod^{G^\chi} \cong U_{(\chi, -\chi)} (\g \oplus \g) \lmod^{G^\chi}$, where $G^\chi$ is diagonally embedded into $G^\chi \times G^\chi$. Similarly $\HC^{G^\chi}_{\chi} U(\g^{\chi_s})$ is a full subcategory of $U_{(\chi, -\chi)} (\g^{\chi_s} \oplus \g^{\chi_s}) \lmod^{G^\chi}$.

Lemma~\ref{thm: FP equivalence} gives an equivalence of categories
\begin{eqnarray}
    \label{eq:consequentequivalence}
    U_\chi (\g) \otimes U_{-\chi}(\g) \lmod^{G^\chi} & {\myrightleftarrows{\rule{1cm}{0cm}}} & U_{\chi} (\g^{\chi_\rs}) \otimes U_{-\chi}(\g^{\chi_s}) \lmod^{G^\chi},
\end{eqnarray}
determined on objects of the categories by
    \begin{eqnarray*}
            M & \longmapsto & M^{\u \oplus \u}\\
    (U_\chi(\g) \otimes U_{-\chi}(\g))\otimes_{U_{\chi}(\p) \otimes U_{-\chi}(\p)} N & \longmapsfrom & N.
    \end{eqnarray*}

Given $M \in \HC_\chi^{G^\chi} U(\g)$, it is clear that $M^{\u \oplus \u}\in \HC_\chi^{G^\chi} U(\g^{\chi_\rs})$ since it is a sub-bimodule of $M$.

We claim that, for $N \in \HC_\chi^{G^\chi} U(\g^{\chi_s})$, the induced module $U_{(\chi, -\chi)}(\g \oplus \g) \otimes_{U_{(\chi, -\chi)} (\p \oplus \p)} N$ is also a Harish-Chandra bimodule. To check the claim, let $x\in \Lie(G^\chi)$, pick $u_1 \otimes u_2 \in U_\chi(\g) \otimes U_{-\chi}(\g)$ and $n \in N$. Writing $\Ad : G \to \End(N)$, the strong equivariance condition for $\Ind(N)$ requires
\begin{equation}
\label{eq:checkme}
    (x \otimes 1 + 1 \otimes x) \cdot ((u_1 \otimes u_2) \otimes_{U(\p)} n) = d_1 \Ad (x) ((u_1 \otimes u_2) \otimes_{U(\p)} n).
\end{equation}

Expanding the right-hand side we get
\begin{align*}
    d_1 \Ad(x) ((u_1 \otimes u_2) \otimes_{U(\p)} n) &= (d_1 \Ad(x) (u_1 \otimes u_2)) \otimes_{U(\p)} n + (u_1 \otimes u_2) \otimes_{U(\p)} (d_1 \Ad(x) n). \\
\intertext{The first term on the right hand side is}
    (d_1 \Ad(x) (u_1 \otimes u_2)) \otimes_{U(\p)} n &= ((x \otimes 1 + 1 \otimes x) \cdot (u_1 \otimes u_2) )\otimes_{U(\p)} n \\
    &= (xu_1 \otimes u_2 - u_1 x \otimes u_2 + u_1 \otimes x u_2 - u_1 \otimes u_2 x) \otimes_{U(\p)} n,
\end{align*}
whilst the second term is
\begin{align*}
    (u_1 \otimes u_2) \otimes_{U(\p)} d_1 \Ad(x) n &= (u_1 \otimes u_2) \otimes_{U(\p)} ((x \otimes 1 + 1 \otimes ) \cdot n) \\
    &= (u_1 \otimes u_2)(x \otimes 1 + 1 \otimes x) \otimes_{U(\p)} n \\
    &= (u_1 x \otimes u_2 + u_1 \otimes u_2 x) \otimes_{U(\p)} n.
\end{align*}
Here we used $x \in \Lie(H) \subseteq \p$. 
Now, combining these two terms and cancelling, we verify \eqref{eq:checkme}, and thus $\Ind(N)$ is a Harish-Chandra bimodule.

\begin{Corollary}\label{c:HCInd}
    There is an equivalence of categories 
    \begin{eqnarray}
    \label{eq:HCInd}
        \HC^{G^\chi}_{\chi} U(\g) \isoto \HC^{G^\chi}_{\chi} U(\g^{\chi_s})
    \end{eqnarray}
\end{Corollary}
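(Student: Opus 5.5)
We obtain \eqref{eq:HCInd} by restricting the equivalence \eqref{eq:consequentequivalence} to full subcategories. That equivalence is Lemma~\ref{thm: FP equivalence}, applied to the Lie algebra $\g\oplus\g$ with $p$-character $(\chi,-\chi)$. Here $H=G^\chi$ is embedded diagonally in $G^{(\chi,-\chi)}=G^\chi\times G^{-\chi}$, and we use the parabolic $P\times P$ with Levi factor $G^{\chi_\rs}\times G^{\chi_\rs}$ and nilradical $\u\oplus\u$. The semisimple part of $(\chi,-\chi)$ is $(\chi_\rs,-\chi_\rs)$, whose centraliser is $\g^{\chi_\rs}\oplus\g^{\chi_\rs}$, so the hypotheses of that lemma are met. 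The functors are $M\mapsto M^{\u\oplus\u}$ and $N\mapsto\Ind(N)$. An equivalence sends full subcategories onto one another provided each functor maps the first subcategory into the second; the unit and counit then restrict to natural isomorphisms. So it remains to check that both functors preserve the Harish-Chandra conditions.

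\textbf{Invariants.} Let $M\in\HC^{G^\chi}_\chi U(\g)$. Then $M^{\u\oplus\u}$ is a $G^\chi$-stable sub-bimodule over $U(\g^{\chi_\rs})$. Strong equivariance with respect to $\Lie(G^\chi)\subseteq\g^{\chi_\rs}$ is inherited by restriction, because $d\Ad$ on $M$ restricts to the subspace. The $p$-support condition holds because the $p$-centre of $\g^{\chi_\rs}$ acts through the specialisation $(\chi,-\chi)$. Finite generation holds because $M^{\u\oplus\u}$ is finite dimensional: $U_\chi(\g)\otimes U_{-\chi}(\g)$ is finite dimensional and $M$ is finitely generated over it. Alternatively, it is finitely generated by the equivalence itself.

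\textbf{Induction.} Let $N\in\HC^{G^\chi}_\chi U(\g^{\chi_\rs})$. The computation verifying \eqref{eq:checkme}, carried out just before the statement, shows that $\Ind(N)$ satisfies strong $G^\chi$-equivariance. The key input there is $\Lie(G^\chi)\subseteq\p$, which lets $x\otimes1+1\otimes x$ pass across $\otimes_{U(\p)}$. The $G^\chi$-action on $\Ind(N)$ is the diagonal action, which is well defined as discussed after \eqref{eq: FP equivalence 1 H}. Finite generation on the left is clear, since $U_\chi(\g)\otimes U_{-\chi}(\g)$ is finite over the $p$-centre and $N$ is finite dimensional. The $p$-support lies in $\{\chi\}$ by construction, since the module is killed by $I_\chi\otimes1$.

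\textbf{Main obstacle.} The point needing care is that, in the bimodule convention, $\HC$-objects are left $U(\g)\otimes U(\g)$-modules obtained by twisting with the antipode. So "$p$-character $-\chi$ on the right" must match $U_{-\chi}(\g)$ in this convention, and the left and right $p$-supports must agree, as in \eqref{eq:leftequalsrightpchar}. Concretely, one must check that $\Ind(N)$ is genuinely a module for $U_{(\chi,-\chi)}(\g\oplus\g)$ restricted to the subcategory $\HC_\chi$; this follows once the parabolic induction is taken over $U_\chi(\p)\otimes U_{-\chi}(\p)$, as in \eqref{eq:consequentequivalence}. Finally, the morphisms in both categories are the $G^\chi$-equivariant bimodule maps, so fullness of the subcategories is immediate and the restricted functors form an equivalence.
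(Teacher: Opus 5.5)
Your proposal is correct and follows the paper's own argument. You check that $M\mapsto M^{\u\oplus\u}$ and $\Ind$ preserve the Harish-Chandra conditions: $M^{\u\oplus\u}$ is a sub-bimodule, and strong equivariance of $\Ind(N)$ comes from the computation \eqref{eq:checkme}. You then restrict the equivalence \eqref{eq:consequentequivalence} to these full subcategories, so the unit and counit remain isomorphisms; your extra checks of $p$-support and finite generation only make explicit what the paper leaves implicit.
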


\begin{proof}
    We have checked that \eqref{eq:consequentequivalence} restrict to well-defined functors between the subcategories appearing in \eqref{eq:HCInd}.
    Since both of them are full subcategories, these functors are adjoint, which one checks just like \eqref{eq:refme}. The unit and counit of the adjunction are isomorphisms, since this holds for the functors appearing in \eqref{eq:consequentequivalence}. This completes the proof.
\end{proof}

The equivalence \eqref{eq:HCInd} allows us to reduce to a category of Harish-Chandra bimodules (for a Levi subalgebra) such that the $p$-character restricts to a nilpotent $p$-character on the derived subalgebra. After making a similar reduction for $U_\chi(\g)\lmod$, Friedlander--Parshall reduced further to the case where the $p$-character is nilpotent \cite[Corollary~3.3]{FP}. They worked under slightly stronger hypotheses, however their argument was streamlined and extended to standard reductive groups (those satisfying (H1), (H2), (H3)) in \cite[B.9]{JaLATokyo}. We now extend this sequence of ideas to the setting of equivariant bimodules.

Since the composition $\t \to \g^{\chi_\rs} \to \g^{\chi_\rs}/ [\g^{\chi_\rs}, \g^{\chi_\rs}]$ is surjective, it follows that the abelian quotient $\g^{\chi_\rs}/ [\g^{\chi_\rs}, \g^{\chi_\rs}]$ has an invertible $p$-power map. Therefore we can choose $\lambda \in (\g^{\chi_\rs})^*$ such that $\lambda(x)^p - \lambda(x^{[p]}) = \chi_\rs(x)^p$ for all $x\in \g^{\chi_\rs}$. The corresponding $\g^{\chi_\rs}$-module $E$ has $p$-character $\chi_\rs$. We pick a generator and denote it $1_E$.

We equip $E$ with the trivial $G^\chi$-action, and we claim that $E \in U_{\chi_\rs}(\g^{\chi_\rs})\lmod^{G^\chi}$. The equivariance condition states that $\lambda$ is $G^\chi$-invariant.

Consider the set $\Lambda_{\chi_\rs} = \{ \lambda \in (\g^{\chi_\rs})^\ast \mid \lambda(x)^p - \lambda(x^{[p]}) = \chi_\rs(x)^p \text{ for all } x \in \g^{\chi_\rs} / [\g^{\chi_\rs},\g^{\chi_\rs}] \}$, a finite set of size $p^{\dim \g^{\chi_\rs} - \dim [\g^{\chi_\rs},\g^{\chi_\rs}]}$. Note that $G^{\chi_\rs}$ is connected and hence fixes $\Lambda_{\chi_\rs}$ pointwise. Therefore $G^\chi \subseteq G^{\chi_\rs}$ fixes $\lambda$, proving that $E$ is $G^\chi$-equivariant.

Since $E$ and its dual $E^*$ are $H$-equivariant, we have functors
\begin{equation} \label{eq: FP equivalence 3}
    U_\chi(\g^{\chi_s})\lmod^H_{\fg} \to U_{\chi_n}(\g^{\chi_s})\lmod^H_{\fg}, \qquad M \mapsto M \otimes E^*
\end{equation}
and
\begin{equation} \label{eq: FP equivalence 4}
    U_{\chi_n}(\g^{\chi_s})\lmod^H_{\fg} \to U_\chi(\g^{\chi_s})\lmod^H_{\fg}, \qquad V \mapsto V \otimes E.
\end{equation}
These are clearly quasi-inverse, and so we have proven.

\begin{Lemma} \label{thm: FP nilpotent reduction}
    The functors \eqref{eq: FP equivalence 3} and \eqref{eq: FP equivalence 4} give an equivalences of categories \begin{eqnarray}
    \label{eq: FP nilpotent reduction}
        U_\chi(\g^{\chi_s}) \lmod^{H}_{\fg}\isoto U_{\chi_n}(\g^{\chi_s}) \lmod^{H}_{\fg}
    \end{eqnarray}
\end{Lemma}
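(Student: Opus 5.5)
The plan is to check directly that the two functors are well defined, exact, and mutually quasi-inverse, using that $E$ is one-dimensional. There are four things to verify.

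\emph{Well-definedness on the algebra side.} For $x\in\g^{\chi_\rs}$, the element $x^p-x^{[p]}$ acts on a tensor product $M\otimes E^*$ by $(x^p-x^{[p]})\otimes 1+1\otimes(x^p-x^{[p]})$, by the primitivity \eqref{eq:pcentreprimitives}. On $M$ it acts by $\chi(x)^p$. On $E^*$ it acts by $-\chi_\rs(x)^p$, since $E^*$ is given by the linear form $-\lambda$ and $(-\lambda)(x)^p-(-\lambda)(x^{[p]})=-\chi_\rs(x)^p$. The total scalar is therefore $\chi(x)^p-\chi_\rs(x)^p=\chi_\rn(x)^p$, using additivity of $p$-th powers. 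Hence $M\otimes E^*$ is a $U_{\chi_\rn}(\g^{\chi_\rs})$-module. The same computation shows $V\otimes E$ has $p$-character $\chi$.

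\emph{Well-definedness on the equivariance side.} The $H$-action is taken diagonally, with $H$ acting trivially on $E$ and $E^*$. The equivariance of the action map $U(\g^{\chi_\rs})\otimes(M\otimes E^*)\to M\otimes E^*$ then reduces to two facts:
\begin{itemize}
\item the equivariance for $M$, and
\item the fact, shown just above, that $\lambda$ is $G^\chi$-invariant and so $H$-invariant (since $H\subseteq G^\chi$).
\end{itemize}
Concretely, for $h\in H$ we have $(h\cdot x)$ acting on $1_{E^*}$ by $-\lambda(h\cdot x)=-\lambda(x)$. Because $\Delta(x)=x\otimes1+1\otimes x$ and $h$ acts trivially on $E^*$, the compatibility follows. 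Finite generation is preserved because $\dim E=1$, so $M\otimes E^*\cong M$ as vector spaces, with a twisted action. Morphisms are sent to $f\otimes\Id$, which gives functoriality.

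\emph{Quasi-inverse property.} I would write down natural isomorphisms
$$(M\otimes E^*)\otimes E\cong M\otimes(E^*\otimes E)\cong M\otimes\k\cong M,$$
with the last step given by evaluation $E^*\otimes E\to\k$. This map is $\g^{\chi_\rs}$-linear, because the form $\lambda-\lambda=0$ is the trivial module. It is $H$-linear trivially. The same isomorphisms work in the other order.

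\emph{Expected obstacle.} The only mildly delicate point is the sign and $p$-power bookkeeping in the first step. One must confirm that the dual of the module attached to $\lambda$ is the module attached to $-\lambda$. One must also confirm that $-\lambda$ satisfies the restricted relation with $-\chi_\rs$, using $(-1)^p=-1$, which holds also for $p=2$ in characteristic $2$. Everything else is formal.
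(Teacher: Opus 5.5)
Your proposal is correct and takes the same route as the paper. The paper only notes that $E$ and $E^*$ are $H$-equivariant, because $\lambda$ is $G^\chi$-invariant, and then calls $-\otimes E^*$ and $-\otimes E$ clearly quasi-inverse. Your $p$-character check via primitivity of $x^p-x^{[p]}$, the equivariance check, and the evaluation isomorphism $E^*\otimes E\cong\k$ just spell out the details it leaves implicit.
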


It is an easy exercise to upgrade this to Harish-Chandra bimodules, nonetheless we provide the details for the reader's convenience.

\begin{Corollary}\label{c:HCtwist}
    We have an equivalence of categories
    \begin{eqnarray}
    \label{eq:ogglepog}
        \HC^{G^\chi}_{\chi} U(\g^{\chi_s}) \isoto \HC^{G^\chi}_{\chi_n} U(\g^{\chi_s}).
    \end{eqnarray}
\end{Corollary}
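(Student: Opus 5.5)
The plan is to run the proof of Corollary~\ref{c:HCInd} again, now for the twisting functors of Lemma~\ref{thm: FP nilpotent reduction}. As before, $\HC^{G^\chi}_{\chi} U(\g^{\chi_s})$ is a full subcategory of $U_\chi(\g^{\chi_s})\otimes U_{-\chi}(\g^{\chi_s})\lmod^{G^\chi} \cong U_{(\chi,-\chi)}(\g^{\chi_s}\oplus\g^{\chi_s})\lmod^{G^\chi}$. Let $E^-$ be the one-dimensional $\g^{\chi_s}$-module given by $-\lambda$. It has $p$-character $-\chi_\rs$, since the Artin--Schreier condition is additive in $\lambda$. Set $\mathbb E := E\boxtimes E^-$, a one-dimensional $\g^{\chi_s}\oplus\g^{\chi_s}$-module with $p$-character $(\chi_\rs,-\chi_\rs)$, and give it the trivial $G^\chi$-action. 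Since $(\chi,-\chi)$ has semisimple part $(\chi_\rs,-\chi_\rs)$ and nilpotent part $(\chi_\rn,-\chi_\rn)$, Lemma~\ref{thm: FP nilpotent reduction} applied to the Levi $\g^{\chi_s}\oplus\g^{\chi_s}$ with $H=G^\chi$ (diagonal) gives mutually quasi-inverse equivalences
\[
M\mapsto M\otimes \mathbb E^*,\qquad V\mapsto V\otimes \mathbb E
\]
between $U_{(\chi,-\chi)}(\g^{\chi_s}\oplus\g^{\chi_s})\lmod^{G^\chi}_{\fg}$ and $U_{(\chi_n,-\chi_n)}(\g^{\chi_s}\oplus\g^{\chi_s})\lmod^{G^\chi}_{\fg}$. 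Finite generation is preserved because $\mathbb E$ is one-dimensional.

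What remains is to show these functors preserve the Harish-Chandra subcategories, meaning strong $G^\chi$-equivariance. I expect this to be the only real point. Take $M\in\HC^{G^\chi}_\chi U(\g^{\chi_s})$ and $x\in\Lie(G^\chi)\subseteq \g^{\chi_s}$. The diagonal element $x\otimes 1+1\otimes x$ acts on $M\otimes\mathbb E^*$ by
\[
(x\otimes1+1\otimes x)m\otimes 1 + m\otimes(-\lambda(x)+\lambda(x)) = (x\otimes 1+1\otimes x)m\otimes 1 .
\]
Here the bimodule convention via the antipode is what makes the two scalars cancel. The $G^\chi$-action on $\mathbb E^*$ is trivial, so its differential vanishes. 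Hence the differential of the $G^\chi$-action on $M\otimes\mathbb E^*$ is the differential on $M$ tensored with the identity, and this agrees with the diagonal action above because $M$ is strongly equivariant. The same computation works for $V\otimes\mathbb E$.

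The $p$-supports also match. The left $p$-character shifts from $\chi$ to $\chi_\rn$ and the right one from $-\chi$ to $-\chi_\rn$. So $\HC_\chi$ is sent into $\HC_{\chi_n}$, where $\HC_{\chi}$ means $p$-support $\chi$ in the sense fixed above.

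Since both HC categories are full subcategories and the functors restrict to them with quasi-inverse restrictions, the restricted functors are mutually quasi-inverse. The unit and counit isomorphisms are inherited from the ambient equivalence, exactly as in the proof of Corollary~\ref{c:HCInd}. This yields \eqref{eq:ogglepog}.
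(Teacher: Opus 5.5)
Your proposal is correct and follows essentially the same route as the paper. Your $\mathbb E = E\boxtimes E^-$ with trivial $G^\chi$-action is exactly the paper's $E^{(2)}$, and the key step is the same cancellation $-\lambda(x)+\lambda(x)=0$ showing that strong $G^\chi$-equivariance is preserved. Quasi-inverseness then comes from $\mathbb E\otimes\mathbb E^*$ being trivial; you obtain this by invoking Lemma~\ref{thm: FP nilpotent reduction} for the doubled Levi $\g^{\chi_s}\oplus\g^{\chi_s}$, while the paper simply observes it directly.
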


\begin{proof}
    We define $E^{(2)} \in U_\chi(\g) \otimes U_{-\chi}(\g)\lmod^{G^\chi}$, letting $U_\chi(\g) \otimes 1$ act via $\lambda$ and $1\otimes U_{-\chi}(\g)$ act via $-\lambda$. Let $G^\chi$ act trivially on $E^{(2)}$.
    
    We need to verify that $M \otimes E \in \HC_\chi^{G^\chi} U(\g^\chi)$ for $M \in \HC_\chi^{G^\chi} U(\g^{\chi_\rs})$. It suffices to show that the strong equivariance condition is satisfied. For $m \in M$, $v \in E^{(2)}$, and $x \in \Lie(H)$ we need
    \begin{equation} \label{HC2 for mxe}
    (x \otimes 1 + 1 \otimes x) (m \otimes v) = d_1 \Ad(x) (m \otimes v).
    \end{equation}
    The left hand side of \eqref{HC2 for mxe} is $(\big(x\otimes 1 + 1\otimes x\big)\cdot m ) \otimes v + (\lambda(x) - \lambda(x)) m\otimes v$, which equals the right hand side because $G^\chi$ acts trivially on $E^{(2)}$.

    We have checked that $M \mapsto M \otimes E$ and $M \mapsto M\otimes E^*$ are functors between the categories \eqref{eq:ogglepog}. They are quasi-inverse because $E \otimes E^*$ is isomorphic to the trivial Harish-Chandra bimodule.
\end{proof}

    The following is the main theorem of the current section. Conceptually, the functor here is the quasi-inverse to \eqref{eq:HCequivalencewithequivariance+support} after appropriate specialisation whilst retaining equivariance.
\begin{Theorem} \label{thm: GF(X) equivariance}
    We have an equivalence
    \begin{eqnarray}
    \label{eq:thm GF(X) equivariance}
        U_\chi(\g) \lmod^{G^{F(\chi)}}_{\fg}\isoto\HC_\chi^{G^\chi} U(\g)
    \end{eqnarray}
\end{Theorem}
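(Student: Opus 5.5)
The equivalence will be built in two steps. First I pass from $G^{F(\chi)}$-equivariant $U_\chi(\g)$-modules to modules over a Harish-Chandra pair, using the structure of the Frobenius neighbourhood from Lemma~\ref{L:semidir}. Then I convert a single reduced module into a bimodule by tensoring with a fixed one dimensional twisting module.

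For the first step, take $H = G^\chi$, so that $K = G^{F(\chi)}$. By Lemma~\ref{L:monoidalequivalence}, $G^{F(\chi)}\lmod$ is monoidally equivalent to $(U_0(\g), G^\chi)\lmod$. The algebra $U_\chi(\g)$, with its adjoint action, is an algebra object in $G^{F(\chi)}\lmod$. The $G_1$-action corresponds to the restricted $\g$-action $\ad$, which is well defined on $U_\chi(\g)$ because $I_\chi$ is $\ad(\g)$-stable. Its image under this equivalence is therefore $U_\chi(\g)$ regarded as an algebra in $(U_0(\g),G^\chi)\lmod$ via the adjoint action. Lemma~\ref{L:Fmodismod} then gives
\[
U_\chi(\g)\lmod^{G^{F(\chi)}} \simeq U_\chi(\g)\lmod^{(U_0(\g),G^\chi)} .
\]
An object on the right is a $U_\chi(\g)$-module $M$ together with two further pieces of data: a $G^\chi$-action making the module map equivariant, and a restricted $\g$-action $\rho$, strongly compatible with $G^\chi$, for which $U_\chi(\g)\otimes M\to M$ is $\g$-equivariant, i.e. $\rho(x)(um) = [x,u]m + u\rho(x)m$.

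For the second step, fix a one dimensional $U_{\chi}(\g)$-module twist, as in the passage before Lemma~\ref{thm: FP nilpotent reduction}, i.e. a character $E_\chi$ of $U_{-\chi}(\g)$. Given $M$ as above, define a bimodule structure on $M$ by keeping the left action and letting $x\in\g$ act on the right by
\[
m\cdot x := xm - \rho(x)m .
\]
The difference $x\mapsto \ad$-twisted action commutes with the left action by the compatibility relation. Its $p$-character is $-\chi$, since $\rho$ is restricted and the left action has $p$-character $\chi$; this is the computation \eqref{eq:leftequalsrightpchar} run backwards. The differential of the $G^\chi$-action equals $x\otimes1+1\otimes x$ on $\Lie(G^\chi)$ because $\rho$ is strongly $G^\chi$-equivariant.

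Conversely, given $N\in\HC^{G^\chi}_\chi U(\g)$, set $\rho := $ the diagonal action $\tau\circ\Delta$, which is restricted by \eqref{eq:leftequalsrightpchar}. Here one must check that $\rho$ really is restricted and not merely has $p$-character $0$ on $\Delta(Z_p)$. By \eqref{eq:pcentreprimitives}, $\Delta(x^p-x^{[p]})$ acts by $\chi(x)^p - \chi(x)^p = 0$, so this holds. The two constructions are mutually inverse and are the identity on morphisms. Finite generation matches because the left module structure is unchanged. This produces \eqref{eq:thm GF(X) equivariance}, with no twist needed after all.

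The main obstacle is the first step: identifying precisely which $(U_0(\g),G^\chi)$-algebra corresponds to $U_\chi(\g)$ under Lemma~\ref{L:monoidalequivalence}. In particular I need that the $G_1$-part of the $G^{F(\chi)}$-action on $U_\chi(\g)$ differentiates to $\ad$. I would check this by noting that $G^{F(\chi)}\subseteq G$ acts on $U_\chi(\g)$ through the adjoint action of $G$ on $U(\g)$, which preserves $I_\chi$ since $G^{F(\chi)}$ fixes $F(\chi)$, and whose differential is $\ad$. Once that is in place, the rest is the bookkeeping above, in the style of \eqref{L:HCequivalencewithequivariance}.
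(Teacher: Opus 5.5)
Your proposal is correct and follows the paper's proof. Lemma~\ref{L:monoidalequivalence} together with Lemma~\ref{L:Fmodismod} reduces to $U_\chi(\g)\lmod^{(U_0(\g),G^\chi)}_{\fg}$, and the right action is then defined as the difference between the restricted $\g$-action and the left action, the key point being that $\g$-equivariance of the left action is equivalent to the left and right actions commuting. You also check the $p$-characters in both directions and identify $U_\chi(\g)$ as an algebra in $(U_0(\g),G^\chi)\lmod$, which the paper leaves implicit.

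The one blemish is the opening reference to a one-dimensional twist $E_\chi$. You never use it, and it should be deleted, since $U_{-\chi}(\g)$ has no one-dimensional modules for general $\chi$.
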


\begin{proof}
    Since $G^\chi$ is a closed subgroup scheme of $G$, and $G^{F(\chi)} = F^{-1}_G ((G^{\chi})^{(1)})$, we have an equivalence between $G^{F(\chi)} \lmod$ and $(U_0 (\g), G^\chi)\lmod$ by Lemma~\ref{L:monoidalequivalence}.
    Thus, the category $U_\chi(\g) \lmod^{G^{F(\chi)}}_{\fg}$ is equivalent to $U_\chi(\g) \lmod^{(U_0(\g), G^\chi)}_{\fg}$, using notation introduced in Section \ref{ss:equivariantmodules}.
    We will show that $U_\chi(\g) \lmod^{(U_0(\g), G^\chi)}_{\fg}$ is equivalent to the category $\HC_\chi^{G^\chi} U(\g)$.

    We recall the structures of modules in both categories.
    
    A module $M \in U_\chi(\g) \lmod^{(U_0(\g), G^\chi)}_{\fg}$ has three actions $\rho \colon U_\chi(\g) \rightarrow \End(M)$, $\sigma \colon U_0(\g) \rightarrow \End(M)$, and $\tau \colon G^\chi \rightarrow \GL(M)$. Both $\rho$ and $\sigma$ are $G^\chi$-equivariant, $\rho$ is $U_0(\g)$-equivariant and $d_1\tau = \sigma|_{\g^\chi}$.

    A module $M \in \HC_\chi^{G^\chi} U(\g)$ has three actions $\rho \colon U_\chi(\g) \rightarrow \End(M)$, $\upsilon \colon U_{\chi}(\g) \to \End(M)$ and $\tau : G^\chi \to \GL(M)$. Both $\rho$ and $\upsilon$ are $G^\chi$-equivariant commuting actions, and $d_1 \tau = (\rho - \upsilon)|_{\g^\chi}$.

    To construct quasi-inverse functors between these categories, we take $M \in U_\chi(\g) \lmod^{(U_0(\g), G^\chi)}_{\fg}$ and construct a Harish-Chandra bimodule with the same underlying vector space, the same actions $\rho, \tau$ and right action $\upsilon := \sigma - \rho$. The data $(\rho, \sigma, \tau)$ can be recovered from $(\rho, \upsilon, \tau)$, so it suffices to show that the functor is well-defined.
    
    We claim that $G^\chi$-equivariance of $\rho$ is equivalent to $\rho$ and $\upsilon$ commuting. Equivariance states that
    for $x, y \in \g$ and $g \in G^\chi$ we have
    \begin{align}
        \sigma(x) \rho(y) &= \rho(\ad(x)y) + \rho(y)\sigma(x). \label{eq: sigma equivariance}
    \end{align}
    whilst $[\rho, \upsilon] = 0$ states that
    \begin{equation}
    \label{eq:piggletog}
        [(\sigma - \rho)(x), \rho(y)] = [\sigma(x), \rho(y)] - [\rho(x), \rho(y)] = \sigma(x) \rho(y) - \rho(y) \sigma(x) - \rho(\ad(x)y) = 0.
    \end{equation}
    Equations \eqref{eq: sigma equivariance} and \eqref{eq:piggletog} are equivalent, and this completes the proof.
\end{proof}

The final corollary of this section will not be needed in the sequel, but we state and prove it here since it is quite surprising. In particular, note that the functor realising the following equivalence is not the forgetful functor, see Remark~\ref{R:curiousequiv}.
\begin{Corollary}
\label{C:semisimplesurpiseequivariance}
    If $\chi \in \g^*$ semisimple then we have an equivalence $U_\chi (\g) \lmod^{G^{F(\chi)}}_{\fg} \isoto U_\chi (\g) \lmod^{G^\chi}_{\fg}$.
\end{Corollary}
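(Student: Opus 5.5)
When $\chi$ is semisimple we have $\chi_\rn = 0$ and $\chi = \chi_\rs$, so $G^\chi = G^{\chi_\rs}$ is a Levi subgroup (Lemma~\ref{L:Zariskiclosedsemisimple}) and $\g^{\chi_\rs} = \g^\chi$. The plan is to pass through Harish-Chandra bimodules, using the chain of equivalences already established. By Theorem~\ref{thm: GF(X) equivariance},
$$U_\chi(\g)\lmod^{G^{F(\chi)}}_{\fg} \simeq \HC^{G^\chi}_\chi U(\g).$$
By Corollary~\ref{c:HCInd} this is equivalent to $\HC^{G^\chi}_\chi U(\g^\chi)$, and by Corollary~\ref{c:HCtwist} (with $\chi_\rn = 0$) to $\HC^{G^\chi}_0 U(\g^\chi)$.

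On the other side, Lemma~\ref{thm: FP equivalence} with $H = G^\chi$ gives
$$U_\chi(\g)\lmod^{G^\chi}_{\fg} \simeq U_\chi(\g^\chi)\lmod^{G^\chi}_{\fg}.$$
Lemma~\ref{thm: FP nilpotent reduction}, tensoring with $E^*$, then reduces this to $U_0(\g^\chi)\lmod^{G^\chi}_{\fg}$. The remaining step is to identify $U_0(\g^\chi)\lmod^{G^\chi}_{\fg}$ with $\HC^{G^\chi}_0 U(\g^\chi)$. This is the restricted instance of the forgetful equivalence \eqref{L:HCequivalencewithequivariance}, applied to the reductive group $G^\chi$ in place of $G$ and specialised to $p$-support $\{0\}$ as in \eqref{eq:HCequivalencewithequivariance+support}.

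Concretely, given $M \in U_0(\g^\chi)\lmod^{G^\chi}$ with action $\rho$ and $G^\chi$-action $\tau$, I would define a right action by $\upsilon := \rho - d\tau$. I then check three things:
\begin{itemize}
\item $\upsilon$ is a Lie algebra representation commuting with $\rho$, which follows from equivariance of $\rho$ exactly as in the computation \eqref{eq: sigma equivariance}--\eqref{eq:piggletog};
\item $\upsilon$ is restricted, because $\rho$ and $d\tau$ are commuting restricted representations and the difference of commuting restricted representations is again restricted;
\item the required $p$-support condition and strong equivariance then hold by construction.
\end{itemize}
Conversely, an object of $\HC^{G^\chi}_0 U(\g^\chi)$ is sent to its left module with the given $G^\chi$-action. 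These two assignments are visibly quasi-inverse, and finite generation is preserved because all modules involved are finite-dimensional over finite-dimensional algebras, or finitely generated in the relevant sense.

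Composing everything yields $U_\chi(\g)\lmod^{G^{F(\chi)}}_{\fg}\simeq U_\chi(\g)\lmod^{G^\chi}_{\fg}$. The main point needing care is the identification in the last step, in particular the claim that $\upsilon$ is restricted. For this I would use that $\rho(x)$ and $d\tau(x)$ commute for $x\in\g^\chi$, which holds since $d\tau(x)$ acts on $\rho(x)$ by $\rho([x,x])=0$. Then $(\rho(x)-d\tau(x))^p = \rho(x)^p - d\tau(x)^p = \rho(x^{[p]}) - d\tau(x^{[p]})$, using that $\rho$ is restricted (as a $U_0$-module) and that $d\tau$ is restricted (as the differential of a group action).

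One should also confirm that the $p$-character bookkeeping in Corollary~\ref{c:HCtwist} is compatible, i.e.\ the left $p$-character $\chi$ and right $p$-character $-\chi$ both shift to $0$; this is immediate from the construction of $E^{(2)}$. I would end by remarking that the resulting functor twists the $G^\chi$-action by $E$, which explains why the equivalence is not the forgetful functor.
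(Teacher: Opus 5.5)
Your proof is correct and follows the paper's chain exactly: Theorem~\ref{thm: GF(X) equivariance}, then Corollaries~\ref{c:HCInd} and \ref{c:HCtwist}, then the forgetful equivalence \eqref{L:HCequivalencewithequivariance} for $G^\chi$ at $p$-support $0$, then Lemmas~\ref{thm: FP equivalence} and \ref{thm: FP nilpotent reduction}; your explicit check that $\rho - d\tau$ is restricted is a good added detail, although strictly $\rho-d\tau$ is an anti-representation (a right action) rather than a representation. Your closing remark is off, however: $E$ carries the trivial $G^\chi$-action and the twists by $E$ and $E^*$ cancel, so the failure to be the forgetful functor comes instead from taking $\u$-invariants for the difference action, as explained in Remark~\ref{R:curiousequiv}.
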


\begin{proof}
Theorem \ref{thm: GF(X) equivariance} gives the equivalence $U_\chi (\g) \lmod^{G^{F(\chi)}} \to \HC_\chi^{G^\chi} U(\g)$. Now applying Corollaries \ref{c:HCInd} and \ref{c:HCtwist} we obtain an equivalence with $\HC_0^{G^\chi} U(\g^\chi)$. Since $\g^{\chi_s} = \g^\chi$, we can apply Lemma~\ref{L:HCequivalencewithequivariance} to obtain an equivalence with $U_0 (\g^\chi) \lmod^{G^\chi}$. Finally Lemma~\ref{thm: FP equivalence} and Lemma~\ref{thm: FP nilpotent reduction} provide the equivalence with $U_\chi (\g) \lmod^{G^\chi}$.
\end{proof}

\begin{Remark}
    \label{R:curiousequiv}
    Note that the functor realising the equivalence in Corollary~\ref{C:semisimplesurpiseequivariance} is not the forgetful functor associated with the inclusion $G^\chi \subseteq G^{F(\chi)}$. Indeed, chasing through the proof we see that it is the functor of invariants with respect to $\theta(\u)$ where $\theta : \g \to \End(M)$ is the difference between the left action and the differential of the $G^{F(\chi)}$-action.
    
    The upshot is that $U_\chi(\g)\lmod^{G^\chi}_{\fg}$ encodes a `secret' restricted $\g$-action, which is far from obvious.
\end{Remark}

\subsection{Central characters for equivariant representations under parabolic induction}
\label{ss:centralcharactersunderparabolic}

Let $\O \subseteq \g^*$ be a semisimple orbit.

Combining \eqref{eq:HCequivalencewithequivariance+support} with Lemma~\ref{L:Zariskiclosedsemisimple}, Corollary~\ref{c:locofUXmod} and Theorem~\ref{thm: GF(X) equivariance} we have constructed an equivalence
\begin{eqnarray}
    \label{eq:specialisationfunctor}
    \HC_\O^G U(\g) \isoto \HC^{G^\chi}_\chi U(\g).
\end{eqnarray}
We claim that \eqref{eq:specialisationfunctor} is given by
\begin{eqnarray}
    M \longmapsto M / (I_\chi\otimes 1) M.
\end{eqnarray}
To see this, it suffices to observe that the functors \eqref{eq:HCequivalencewithequivariance+support} and \eqref{eq:thm GF(X) equivariance} are the identity functor on vector spaces, whilst the functor appearing in Corollary~\ref{c:kXmod=Hmodv2} is specialisation $M \mapsto M/(I_\chi\otimes 1) M$. It follows from Corollary~\ref{c:kXmod=Hmodv2} that the quasi-inverse is given by
\begin{eqnarray}
\label{eq:popcandy}
\begin{array}{rcl}
    \Ind_{G^{F(\chi)}}^G  \colon  \HC^{G^\chi}_\chi U(\g) & \to & \HC_\O^G U(\g)\\
    M & \mapsto & (\k[G] \otimes M)^{G^{F(\chi)}},
    \end{array}
\end{eqnarray}
where the $G^{F(\chi)}$-action is assembled from the $G^\chi$-action and the diagonal $U_0(\g)$-action, using Theorem~\ref{thm: GF(X) equivariance}.

Now, combining \eqref{eq:HCInd} with \eqref{eq:ogglepog} we obtain an equivalence
\begin{eqnarray}
\label{eq:HCrestrictedtoreduced}
\begin{array}{rcl}
    \HC_0^{G^\chi} U(\g^\chi) & \isoto & \HC_\chi^{G^\chi} U(\g),\\
    M & \longmapsto & U_{(\chi, -\chi)}(\g\oplus \g) \otimes_{U_{(\chi, -\chi)}(\p\oplus \p)} (M \otimes E^{(2)})
\end{array}
\end{eqnarray}
Recall from Corollary~\ref{c:HCtwist} that the module $E^{(2)}$ is determined by a choice of character $\lambda \in (\g^{\chi})^*$.

Suppose that $T \subseteq G^\chi$ is a maximal torus of $G$, and write $W^\chi$ for the Weyl group $N_{G^\chi}(T)/T$. Recall the Harish-Chandra centre of $U(\g^\chi)$ is $U(\g^\chi)^{G^\chi}$ and $\Spec U(\g^\chi)^{G^\chi} = \t^* / W^\chi_\bullet$. Write $$\theta^{(2)} : \t^*/W^\chi_\bullet \times \t^*/W^\chi_\bullet \to \t^* / W_\bullet \times \t^*/W_\bullet$$ for the quotient map.

The following result describes the effect of \eqref{eq:specialisationfunctor} and \eqref{eq:HCrestrictedtoreduced} on left and right Harish-Chandra central characters. The proof is straightforward and so we omit it.
\begin{Lemma} \
\label{L:cclemma}
    \begin{enumerate}
        \item The equivalence \eqref{eq:specialisationfunctor} respects left and right Harish-Chandra central characters.
        \item Translation by $\lambda$ on $\t^*$ descends to a well-defined morphism on $\t^* / W_\bullet^\chi$. If $M \in \HC_0^{G^\chi} U(\g^\chi)$ has (generalised) left and right HC central character
        $(\psi_1, \psi_2) \in \t^* / W^\chi_\bullet \times \t^* / W^\chi_\bullet$
        then after passing through \eqref{eq:HCrestrictedtoreduced} the (generalised) central character is
        $$\theta^{(2)}(\psi_1 + \lambda , \psi_2 - \lambda)  \ \in \ \t^* / W_\bullet \times \t^* / W_\bullet.$$
        $\hfill \qed$
    \end{enumerate}
\end{Lemma}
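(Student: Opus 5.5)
We must prove Lemma \ref{L:cclemma}: (1) the specialisation equivalence \eqref{eq:specialisationfunctor} respects left and right HC central characters; (2) passing through \eqref{eq:HCrestrictedtoreduced} shifts the central characters by $(\lambda,-\lambda)$ and then applies $\theta^{(2)}$.

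For part (1), the plan is to use that \eqref{eq:specialisationfunctor} is the functor $M\mapsto M/(I_\chi\otimes 1)M$. This is a quotient of $M$ by a sub-bimodule, so $U(\g)^G\otimes U(\g)^G$ acts on it through its action on $M$. Hence if $(\ker\psi_1\otimes 1)^i$ and $(1\otimes\ker\psi_2)^i$ kill $M$, they also kill the specialisation. For the converse direction, I would use Corollary~\ref{C:centralcharactersindecomposable} to decompose $M$ into pieces with generalised characters $(\psi_1,\psi_2)$. The functor is an equivalence, so it is additive and sends nonzero objects to nonzero objects. Therefore each summand lands in the corresponding summand on the target side, and generalised characters are preserved in both directions.

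For part (2), I would first check that $\lambda$ is $W^\chi$-invariant. Since $\lambda$ is a character of $\g^\chi$, it vanishes on $[\g^\chi,\g^\chi]\cap\t$, which is spanned by the coroots $h_\alpha$ with $\alpha\in\Phi^\chi$. Thus $s_\alpha\lambda=\lambda$ for all $\alpha\in\Phi^\chi$, and $w\bullet(\mu+\lambda)=w\bullet\mu+\lambda$ for $w\in W^\chi$, so translation descends to $\t^*/W^\chi_\bullet$.

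Next, twisting by $E^{(2)}$ shifts characters. For $z\in U(\g^\chi)^{G^\chi}$, the element $z\otimes 1$ acts on $M\otimes E$ through the Hopf coproduct. Since $E$ is one-dimensional with $\g^\chi$ acting by $\lambda$, the action of $z$ on $M\otimes E$ equals the action of $\tau_\lambda(z)$ on $M$, where $\tau_\lambda$ is the automorphism $x\mapsto x+\lambda(x)$ of $U(\g^\chi)$. Under the HC homomorphism of $\g^\chi$, this automorphism is translation by $\lambda$ on $\t^*$. So the left character becomes $\psi_1+\lambda$; the right side gives $\psi_2-\lambda$ in the same way. This step is routine.

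The main step is the parabolic induction $U_{(\chi,-\chi)}(\g\oplus\g)\otimes_{U(\p\oplus\p)}N$. I would show that for $z\in U(\g)^G$ the image of $z$ in $U(\g)/U(\g)\u$ lies in $U(\g^\chi)\oplus$ (terms killed by $\u$-invariants), and equals $\mathrm{hc}_\chi(z)$, the HC homomorphism relative to the parabolic $\p$. More precisely, $z\equiv \mathrm{hc}_\chi(z)\bmod U(\g)\u$ with $\mathrm{hc}_\chi(z)\in U(\g^\chi)^{G^\chi}$. Then $z$ acts on $1\otimes N$ through $\mathrm{hc}_\chi(z)$. Since $z$ is central and $1\otimes N$ generates, generalised eigenvalues agree on the whole induced module.

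The composite of $\mathrm{hc}_\chi$ with the HC map of $\g^\chi$ is the HC map of $\g$, provided the Borel of $\g^\chi$ is chosen to lie in $B$. The $\rho$-shifts must be compatible: the difference $\rho-\rho^\chi$ must be $W^\chi$-invariant, which follows as in the $\lambda$ step. Alternatively, one can absorb the shift by using dot actions consistently. This shows that the induced module has character $\theta(\psi_1+\lambda)$, and the same argument on the right factor gives $\theta(\psi_2-\lambda)$.

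I expect the obstacle to be the careful bookkeeping of $\rho$-shifts and of the $-\lambda$ sign coming from the antipode on the right factor.
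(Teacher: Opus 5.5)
The paper omits this proof as ``straightforward'', and your outline is the natural argument. Part (1) is fine as written. In part (2), two steps are correct: describing the $E^{(2)}$-twist through $\tau_\lambda\colon x\mapsto x+\lambda(x)$, and reducing the parabolic induction to a parabolic Harish-Chandra projection $\mathrm{hc}_\chi$. Two small details should be made explicit:
\begin{itemize}
\item $\tau_\lambda$ preserves $U(\g^\chi)^{G^\chi}$ because $\lambda$ is $G^\chi$-fixed, as shown in the paper.
\item The congruence $z\equiv\mathrm{hc}_\chi(z)$ modulo $\u^-U(\g)\cap U(\g)\u$ is proved by grading $U(\g)$ via a cocharacter of $Z(G^\chi)^\circ$ that is positive on $\u$. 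Then $U(\g)_0=U(\g^\chi)\oplus(\u^-U(\g)\cap U(\g)\u)_0$ by PBW.
\end{itemize}

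The point that needs correcting is your compatibility proviso. The identity $\mathrm{HC}_B=\mathrm{HC}^\chi\circ\mathrm{hc}_\chi$ needs $\n^+=(\n^+\cap\g^\chi)\oplus\u$ and $\n^-=(\n^-\cap\g^\chi)\oplus\u^-$, i.e.\ $B\subseteq P$ and $B^\chi=B\cap G^\chi$. Requiring only that the Borel of $\g^\chi$ lie in $B$ is not enough. In the paper $P$ is an arbitrary parabolic with Levi factor $G^\chi$, and its opposite parabolic contains the same $B^\chi$.

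The choice really changes the answer. Take $G=\mathrm{SL}_2$ with $p>2$, $\chi$ regular semisimple, $G^\chi=T$ and $P=B^-$. Inducing $\k_\nu$ gives a lowest weight module on which the Casimir $\tfrac12h^2+ef+fe$ acts by $\tfrac12\nu(h)(\nu(h)-2)$. So its central character is $W_\bullet(-\nu)$, which differs from $W_\bullet\nu$ because $\nu(h)\notin\F_p$. One must therefore arrange $B\subseteq P$, for example by replacing $\chi$ by a $W$-conjugate so that $G^\chi$ is a standard Levi subgroup. The lemma implicitly assumes this.

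Likewise, the $W^\chi$-invariance of $\rho-\rho^\chi$ does not follow ``as in the $\lambda$ step'', since that step used that $\lambda$ kills commutators. It holds because, when $B\subseteq P$, the simple roots of $\Phi^\chi$ are simple in $\Phi$, so $\langle\rho-\rho^\chi,\beta^\vee\rangle=0$ for all $\beta\in\Phi^\chi$. For a non-standard Levi with merely $B^\chi\subseteq B$ it can fail.

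Alternatively, avoid $\rho$ altogether. The paper's convention is that a highest weight module of weight $\mu$ has character $W_\bullet\mu$, so you can work with the unshifted projections. Then $z\in U(\g)^G$ acts on $1\otimes N$ by $\mathrm{HC}_B(z)(\mu)=\mathrm{HC}^\chi(\mathrm{hc}_\chi(z))(\mu)$, and the well-definedness of $\theta$ comes for free.
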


\section{Classification of simple modules}

From the previous sections, we may deduce that $\HC_\O^G U(\g)$ is equivalent to $\HC_0^{G^\chi} U(\g)$ when $\O$ is a semisimple coadjoint $G$-orbit and $\chi\in\O$. In this section we classify the simple modules   in $\HC_0^{G^\chi} U(\g)$ and thus deduce Theorem~\ref{T:MainThm}.

\subsection{Representations of the Frobenius neighbourhood of a reductive subgroup scheme}

Let $G$ be a standard reductive group, as in Section~\ref{ss:reductivegroupsandLiealgebras}. Let $H \subseteq G$ be a closed, connected subgroup scheme. Following Section~\ref{ss:stabilisersoftwistedcharacters} we consider the Frobenius neighbourhood $K := F^{-1}(H^{(1)})$ of $H$ in $G$. The goal of this section is to state and prove the classification of simple $K$-modules.

Fix a maximal torus and a Borel $T\subseteq B \subseteq G$, corresponding to simple roots $\Delta \subseteq \Phi \subseteq X(T)$. Recall the notation $X_1(T) = \{\lambda \in X(T) \mid 0 \le \langle \lambda ,\alpha^\vee\rangle < p \text{ for } \alpha \in \Delta\}$ from Section~\ref{ss:sketchclassificationintro}

\begin{Lemma}
\label{L:liftingtoG}
    If $M$ is a simple $G_1$-module then there is a simple $G$-module $\oM$ such that $\oM|_{G_1} = M$.
\end{Lemma}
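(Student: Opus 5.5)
The plan is to use the classical description of simple $G_1$-modules via restricted highest weights, and to lift using the simple $G$-modules $L(\lambda)$ with $\lambda\in X_1(T)$. By \cite[II.3.10]{JanRAGS} the simple $G_1T$-modules are the $\widehat L_1(\lambda)$ for $\lambda\in X(T)$. Their restrictions to $G_1$ are simple and depend only on $\lambda$ modulo $pX(T)$, so every simple $G_1$-module has the form $L_1(\lambda)$, the restriction of $\widehat L_1(\lambda)$ to $G_1$. Since $\widehat L_1(\lambda+p\mu)\cong \widehat L_1(\lambda)\otimes p\mu$ and the restriction of $p\mu$ to $G_1$ is trivial, we may shift $\lambda$ by an element of $pX(T)$.

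The first step is to arrange $\lambda\in X_1(T)$. Under (H1), with simply connected derived group, the map $X(T)\to \prod_{\alpha\in\Delta}\mathbb Z$, $\lambda\mapsto(\langle\lambda,\alpha^\vee\rangle)_\alpha$, is surjective. So given $\lambda$, choose $\mu\in X(T)$ with $\langle \mu,\alpha^\vee\rangle$ equal to the appropriate quotient for each simple root. Then $\lambda-p\mu\in X_1(T)$. I expect this step to be the main subtlety. Surjectivity on simple coroot pairings is exactly where simple connectedness of the derived subgroup enters. For a general reductive $G$, such as $PGL_2$, restricted weights may fail to be represented, and indeed not every simple $G_1$-module lifts.

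The second step is to take $\oM:=L(\lambda)$ with $\lambda\in X_1(T)$ and invoke Curtis' theorem \cite[II.3.15]{JanRAGS}: for $\lambda\in X_1(T)$ the restriction of $L(\lambda)$ to $G_1$ is simple and isomorphic to $L_1(\lambda)$. The proof of that theorem as presented in Jantzen is stated for $G$ reductive with simply connected derived group; otherwise one reduces to the derived group via $G=Z(G)^0\cdot\mathcal D G$. Then $\oM|_{G_1}\cong M$, and after composing with this isomorphism we get an equality of $G_1$-modules, as the statement requires.
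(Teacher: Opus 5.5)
Your proposal is correct and is essentially the paper's argument: both invoke \cite[II.3.15]{JanRAGS} and reduce to showing that every class in $X(T)/pX(T)$ has a representative in $X_1(T)$, which is where simple connectedness of the derived subgroup enters. The only difference is cosmetic. You get the restricted representative by choosing $\mu$ with prescribed simple coroot pairings and passing to $\lambda-p\mu$, whereas the paper splits $X(T)=X_0(T)\oplus M$ with $M\cong X(T)_{\operatorname{der}}$ and checks $M\cap X_1(T)\to M/pM$ is onto.
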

\begin{proof}
    Thanks to \cite[Proposition~II.3.15(2)]{JanRAGS} the lemma will follow if we show that the composition $X_1(T) \hookrightarrow X(T) \onto X(T) / pX(T)$ is surjective.

    As per \cite[II.1.18]{JanRAGS} define $X_0(T) = \{\lambda \in X(T) \mid \langle \lambda, \alpha^\vee\rangle = 0$ for all $\alpha \in \Phi\}$. This coincides with the kernel of the restriction $X(T) \onto X(T)_{\operatorname{der}}$ to the group of characters of $T \cap G_{\operatorname{der}}$, where $G_{\operatorname{der}}$ denotes the derived subgroup. Since $X(T)_{\operatorname{der}}$ is torsion free, we can write $X(T) = X_0(T) \oplus M$ for some free abelian subgroup $M$. Clearly the composition $M \to X(T) \to X(T)_{\operatorname{der}}$ is an isomorphism.

    Now we can make identifications
    \begin{eqnarray*}
        X(T)/pX(T) & = & X_0(T) / pX_0(T) \oplus M / p M,\\
        X_1(T) & = & X_0(T) \oplus (M \cap X_1(T))
    \end{eqnarray*}
    Since $G_{\operatorname{der}}$ is assumed to be simply connected, \cite[Remark~II.3.15(2)]{JanRAGS} implies that $M \cap X_1(T) \onto M / pM$, and this implies $X_1(T) \onto X(T) / pX(T)$. This completes the proof.
\end{proof}

If $M$ is a simple $K$-module then $M^{(1)}$ is a simple $F_G(K) = H^{(1)}$-module. We write $M^F$ for the pullback of $M^{(1)}$ to $K\lmod$ through $F_G$.

If $M$ is a simple $G_1$-module then $\oM|_K$ is a simple $K$-module which restricts to $M$ on $G_1$. The following result is a generalisation of \cite[Theorem~5.5]{Ho}, and both follow a similar argument to \cite[II.3.16(1)]{JanRAGS}.
\begin{Theorem}
\label{T:simplesFrobeniusneighbourhood}
    Let $\Lambda_1, \Lambda_2$ be sets such that $\{M_\lambda \in H\lmod \mid \lambda \in \Lambda_1\}$ are representatives of the isomorphism classes of simple $H$-modules, and $\{N_\lambda \in G_1\lmod \mid \lambda \in \Lambda_2\}$ are representatives of the isomorphism classes of simple $G_1$-modules.
        Then the modules
        \begin{eqnarray}
            \label{eq:classifysimplestensor}
            \{(\circleover{N}_{\lambda_1})|_K \otimes M_{\lambda_2}^F \mid \lambda_1 \in \Lambda_1, \ \lambda_2 \in \Lambda_2\}
        \end{eqnarray}
        form a complete set of non-isomorphic simple $K$-modules.
\end{Theorem}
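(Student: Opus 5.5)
The plan is to imitate the proof of \cite[II.3.16(1)]{JanRAGS}, using the normal subgroup scheme $G_1 \unlhd K$ with quotient $K/G_1 \cong H^{(1)}$ via $F_G$. Note that the indexing in \eqref{eq:classifysimplestensor} should read $N_{\lambda_1}$ with $\lambda_1\in\Lambda_2$ and $M_{\lambda_2}$ with $\lambda_2\in\Lambda_1$. I will prove three things in turn: each module in \eqref{eq:classifysimplestensor} is simple, every simple $K$-module is of this form, and the modules are pairwise non-isomorphic.

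\emph{Simplicity.} Put $V = (\circleover{N}_{\lambda_1})|_K \otimes M_{\lambda_2}^F$. Since $G_1$ acts trivially on $M_{\lambda_2}^F$, restriction to $G_1$ gives $V|_{G_1} \cong N_{\lambda_1}\otimes M_{\lambda_2}^{F}$, a direct sum of copies of the simple module $N_{\lambda_1}$. Because $N_{\lambda_1}$ is absolutely simple ($\k$ is algebraically closed), $\End_{G_1}(N_{\lambda_1})=\k$. The evaluation map therefore gives an isomorphism of $K$-modules
\[
\Hom_{G_1}(\circleover{N}_{\lambda_1},V)\otimes \circleover{N}_{\lambda_1}\isoto V,
\]
where $K$ acts on the $\Hom$ space through $K/G_1 \cong H^{(1)}$, and $\Hom_{G_1}(\circleover{N}_{\lambda_1},V)\cong M_{\lambda_2}^F$. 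A $K$-submodule $V'\subseteq V$ restricts to a $G_1$-submodule, so it has the form $\circleover{N}_{\lambda_1}\otimes U$ with $U=\Hom_{G_1}(\circleover{N}_{\lambda_1},V')$ an $H^{(1)}$-submodule of $M_{\lambda_2}^{(1)}$. Simplicity of $M_{\lambda_2}$ as an $H$-module, which is equivalent to simplicity of $M_{\lambda_2}^{(1)}$ as an $H^{(1)}$-module since twisting is an equivalence, then forces $V'=0$ or $V'=V$.

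\emph{Exhaustion.} Let $V$ be a simple $K$-module. It is finite dimensional, so its $G_1$-socle is nonzero and $K$-stable because $G_1$ is normal. Hence $V|_{G_1}$ is semisimple. Since $K$ is generated by $G_1$ and $H$, and $H$-conjugation permutes the isotypic components, I need every simple $G_1$-module to be $K$-conjugation invariant. This holds because each $N_\lambda$ extends to a $G$-module $\circleover{N}_\lambda$ by Lemma~\ref{L:liftingtoG}, which is where that lemma is used. Next pick $\lambda_1$ with $\Hom_{G_1}(\circleover{N}_{\lambda_1},V)\neq 0$. This $\Hom$ space is a $K$-module on which $G_1$ acts trivially, and it is nonzero, so it contains a simple $H^{(1)}$-submodule $M^{(1)}$. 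The evaluation map $\circleover{N}_{\lambda_1}\otimes M^F\to V$ is a nonzero $K$-map. Its source is simple by the first step, so it is an isomorphism. Finally, simple $H^{(1)}$-modules are exactly the twists $M_\lambda^{(1)}$, again because twisting is an equivalence.

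\emph{Non-isomorphism.} The $G_1$-isotypic type of $V$ recovers $\lambda_1$. The $H^{(1)}$-module $\Hom_{G_1}(\circleover{N}_{\lambda_1},V)\cong M_{\lambda_2}^{(1)}$ then recovers $\lambda_2$.

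The main obstacle is the exhaustion step, specifically checking that the isomorphism $\Hom_{G_1}(\circleover N,V)\otimes\circleover N\cong V$ is $K$-equivariant at the level of group schemes rather than just abstract groups. I would handle this functorially with the $A$-points formalism of \cite[I.6]{JanRAGS}, as is done in \cite[II.3.16]{JanRAGS}.
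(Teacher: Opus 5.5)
Your proposal is correct and follows the same Clifford-theoretic argument as the paper, modelled on Jantzen's II.3.16: simplicity of $\circleover{N}|_K\otimes M^F$ via $\Hom_{G_1}(\circleover{N},-)$, exhaustion via the evaluation map, and non-isomorphism by restricting to $G_1$ and then applying $\Hom_{G_1}(\circleover{N},-)$ (you are also right that the indices in the statement are swapped). The one real difference is that you prove simplicity first, so your exhaustion step needs only a single nonzero $K$-map $\circleover{N}|_K\otimes M^F\to V$ between simple modules; consequently your claim that $\soc_{G_1}V$ is $K$-stable ``because $G_1$ is normal'' is never used (the paper instead deduces this from density of $H(\k)$ in $H$ via the remark after Jantzen's Proposition I.6.16), and the $K$-equivariance you flag as the main obstacle is formal, since $\Hom_{G_1}(\circleover{N},V)=(\circleover{N}^*\otimes V)^{G_1}$ is a $K$-submodule of $\circleover{N}^*\otimes V$ for $G_1\unlhd K$ and evaluation is a contraction.
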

\begin{proof}
    {\it Step 1:} First of all, we show that every simple $K$-module has the form \eqref{eq:classifysimplestensor}.
    
    Note that $G_1 \subseteq K$ is a normal subgroup, that $H \subseteq K$ is a group subscheme and (by Lemma~\ref{L:semidir}) that $K$ is generated by $G_1$ and $H$, i.e. $K = G_1 H$.
    
    Suppose that $V$ is a simple $K$-module. Since $\k$ is perfect and $H$ is connected, we can apply \cite[Corollary~18.3]{BorelLAGS} to see that $H(\k)$ is dense in $H$. As a consequence, we can apply the remark after \cite[Proposition~I.6.16]{JanRAGS} to deduce that the socle $\soc_{G_1}(V)$ is a $K$-submodule. By \cite[I.2.14(2)]{JanRAGS} we see that $\soc_{G_1}(V) \ne 0$, and so $\soc_{G_1}(V) = V$. Thus $V|_{G_1}$ is semisimple.

    If $L$ is a simple $G_1$-module then we can equip $\Hom_{G_1}(\circleover{L}|_K, V)$ with a $K$-module structure, with $G_1$ acting trivially. It is not hard to see that the following is a $K$-equivariant map
    \begin{eqnarray*}
    \Hom_{G_1}(\circleover{L}|_K, V) \otimes \circleover{L}|_K &\longrightarrow & V \\
        \phi \otimes v & \longmapsto & \phi(v)
    \end{eqnarray*}
    Consider the direct sum of such maps
     \begin{eqnarray}
     \label{eq:directsumiso}
    \bigoplus_{[L] \in \operatorname{Irr}(G_1)}\Hom_{G_1}(\circleover{L}|_K, V) \otimes \circleover{L}|_K &\longrightarrow & V
    \end{eqnarray}
    where the direct sum varies over the isomorphism classes of simple $G_1$-modules. This is $K$-equivariant, and it is an isomorphism of $G_1$-modules because $V|_{G_1}$ is semisimple. Hence it is an isomorphism of $K$-modules.

    If $L$ is a simple $G_1$-module then the image of $\Hom_{G_1}(\circleover{L}|_K, V) \otimes \circleover{L}|_K$ under \eqref{eq:directsumiso} is a $K$-submodule of $V$; it is nonzero if $L$ appears as a composition factor of $V|_{G_1}$. Since $V$ is simple, there is a unique such isomorphism class $L$. This shows that $V|_{G_1}$ is semisimple and isotypic.

    Now let $N$ denote the unique simple $G_1$-module appearing in $V|_{G_1}$. Write $M := \circleover{N}|_K$ and $M := \Hom_{G_1}(N, V)$. The image of $N \otimes M \to V$ is nonzero hence the map is surjective. Considering the restriction to $G_1$, it is also injective.  Noting that $M|_{G_1}$ has trivial $G_1$-action, we have shown that $V \cong N \otimes M$ has the required form.

    {\it Step 2:} Next we show that if $M, N$ are simple $K$-modules, with $M|_{G_1}$ trivial and $N|_{G_1}$ simple, then $N \otimes M$ is simple. If not, then there is a nonzero proper simple submodule $S \subseteq N \otimes M$.

    Observe that $m \mapsto (n\mapsto n\otimes m)$ gives an injective $K$-linear map
    \begin{eqnarray}
    \label{eq:Hisotosomething}
        M \longrightarrow \Hom_{G_1}(N, N\otimes M).
    \end{eqnarray}
    Since $(N\otimes M)|_{G_1}$ is semisimple and isotypic, we have an isomorphism 
    \begin{eqnarray}
    \label{eq:dimcomp}
        \Hom_{G_1}(N, N\otimes M) \otimes N \longrightarrow N\otimes M,
    \end{eqnarray}
    reasoning as per the second and third paragraphs of Step 1 of the current proof. Now, a dimension comparison using \eqref{eq:dimcomp} confirms that $\dim M = \dim \Hom_{G_1}(N, N\otimes M)$, which implies that \eqref{eq:Hisotosomething} is an isomorphism.
    
    Using the left-exactness of $\Hom_{G_1}(N, -)$ we see that $\Hom_{G_1}(N, S) \subseteq \Hom_{G_1}(N, N\otimes M)$ is a proper nonzero $K$-submodule -- the dimension is the composition multiplicity of $N$ in $S|_{G_1}$, which is strictly less than the composition multiplicty of $N$ in $(N \otimes M)|_{G_1}$. We conclude that $M$ is not simple, which contradicts our assumptions. Hence $N \otimes M$ is simple, as claimed.

     {\it Step 3:} Finally we argue that the modules \eqref{eq:classifysimplestensor} are pairwise non-isomorphic. Indeed, if $$\circleover{N}_{\lambda_1}|_K \otimes M_{\lambda_2}^F \cong \circleover{N}_{\lambda_1'}|_K \otimes M_{\lambda_2'}^F$$ for indexes $\lambda_1, \lambda_1', \lambda_2, \lambda_2'$ then by restricting both to $G_1$ we see that $\lambda_1 = \lambda_1'$, using Step 1. Furthermore, Step 2 of the proof implies that $$M_{\lambda_2}|_H \cong \Hom_{G_1}(\circleover{N}_{\lambda_1}|_H, N_{\lambda_1} \otimes M_{\lambda_2}) \cong \Hom_{G_1}(\circleover{N}_{\lambda_1'}|_H, N_{\lambda_1'} \otimes M_{\lambda_2'}) \cong M_{\lambda_2'}|_H.$$
     This completes the proof.
\end{proof}

\subsection{Simple restricted Harish-Chandra bimodules}

Let $G$ be a standard reductive group. Now we apply Theorem~\ref{T:simplesFrobeniusneighbourhood} to classify the simple objects in $\HC_0^G U(\g)$, completing the proof of Theorem~\ref{T:MainThm}.

Fix a choice of maximal torus and Borel subgroup $T \subseteq B \subseteq G$ and write $\t \subseteq \b \subseteq \g$ for their Lie algebras. Let $h_1,...,h_r \in \t$ denote a basis consisting of toral elements of $\t$, i.e. $h_i^{[p]} = h_i$ for $i=1,...,r$.

Write
\begin{eqnarray*}
\Lambda_0 &:=& \{ \lambda \in \t^* \mid \lambda(h_i) \in \F_p \text{ for all } i = 1,...,r\} \\
& = & \{\lambda\in \t^* \mid \lambda(h)^p = \lambda(h^{[p]}) \text{ for all } h\in \t\}.
\end{eqnarray*}

Taking the differential at $1\in \G_m^\times$ gives a homomorphism
\begin{eqnarray}
    d_1 : X(T) \to \Lambda_0
\end{eqnarray}
This map has kernel $p X(T)$ and induces an isomorphism $X(T) / pX(T) \isoto \Lambda_0$.

For $\lambda \in \Lambda_0$ we let $\k_\lambda$ denote the one dimensional $B$-module associated to $\lambda$, and by slight abuse of notation we use the same notation for the one dimensional (restricted) $\b$-module determined by $d_1\lambda$.

Following \cite[II.2.1]{JanRAGS} we let $H^0(\lambda) := R^0 \Ind_B^G(\k_\lambda)$, the universal highest weight $G$-module generated by a weight vector of weight $\lambda$. According to Corollary~II.2.3 and Corollary~II.2.7 of \cite{JanRAGS} the socle $L(\lambda) := \soc_G H^0(\lambda)$ is simple and the modules
\begin{eqnarray}
    \{L(\lambda) \mid \lambda \in X(T)_+\}
\end{eqnarray}
form a complete list of non-isomorphic simple $G$-modules.

Similarly, for $\lambda \in X(T)$ we can define the corresponding baby Verma module $Z_0(\lambda) := U_0(\g) \otimes_{U_0(\b)} \k_\lambda$. According to \cite[\textsection 10]{JaLA} every $Z_0(\lambda)$ admits a unique simple quotient $Z_0(\lambda) \onto L_0(\lambda)$. Furthermore, we have $L_0(\lambda) \cong L_0(\mu)$ if and only if $\lambda -\mu \in p X(T)$, and the modules
\begin{eqnarray}
\{L_0(\lambda) \mid \lambda \in X(T) / pX(T)\}
\end{eqnarray}
form a complete list of non-isomorphic $U_0(\g)$-modules. It is a classical theorem of Curtis \cite{Cu} states that for $\lambda \in X_1(T)$ the differential of the representation $G \to \End L(\lambda)$ is isomorphic to $\g \to \End L_0(\lambda)$.

Consider the outer tensor product
$$- \boxtimes - : U_0(\g)\lmod \times U_0(\g) \lmod \to U_0(\g) \otimes U_0(\g) \lmod \cong U_0(\g \oplus \g)\lmod.$$
Thanks to \eqref{eq:Frobeniuskernelandrestrictedenvelopingalgebra} if $\lambda, \mu \in X(T) / p(T)$ then we can regard $L_0(\lambda) \boxtimes L_0(\mu)$ as a simple $G_1 \times G_1$-module, and by Lemma~\ref{L:liftingtoG} it can be regarded as a $G\times G$-modules, hence equipped with the diagonal action via $\Delta : G\to G\times G$. The $U_0(\g) \otimes U_0(\g)$-module and $\Delta(G)$-module structures make $L_0(\lambda) \boxtimes L_0(\mu)$ into an object in $\HC_0^G U(\g)$.

On the other hand, if $F : G \times G \to G^{(1)} \times G^{(1)}$ denotes the Frobenius morphism and $M \in \Delta(G)\lmod$ then we write $M^F \in U_0(\g\oplus \g)\lmod^G$ for the pullback of $M^{(1)}$ through $F$, equipped with the trivial $U_0(\g\oplus \g)$-module structure.

\begin{Theorem}\label{T:classrestsimples}
    A complete set of non-isomorphic simple objects in $\HC_0^G U(\g)$ is given by
    \begin{eqnarray}
        \{(L(\lambda) \boxtimes L(\mu)) \otimes L(\gamma)^F \mid \lambda, \mu \in \Lambda_0, \ \gamma \in X(T)_+\}.
    \end{eqnarray}
\end{Theorem}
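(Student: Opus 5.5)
The plan is to reduce to Theorem~\ref{T:simplesFrobeniusneighbourhood}, applied to the group $G\times G$ with the closed connected subgroup $H=\Delta(G)$. First, identify $\HC_0^G U(\g)$ with a category of modules over a Frobenius neighbourhood. An object of $\HC_0^G U(\g)$ is a $U_0(\g)\otimes U_0(\g) = U_0(\g\oplus\g)$-module with a compatible $\Delta(G)$-action whose differential equals the restriction of the $\g\oplus\g$-action to the diagonal. In other words, it is an object of $(U_0(\g\oplus\g),\Delta(G))\lmod$, with finite generation automatic since $U_0(\g\oplus\g)$ is finite dimensional.

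Apply Lemma~\ref{L:monoidalequivalence} to the ambient group $G\times G$, which is standard reductive, and $H=\Delta(G)$. This identifies $\HC_0^G U(\g)$ with $K\lmod$, where
\[
K=F_{G\times G}^{-1}(\Delta(G)^{(1)})\cong ((G\times G)_1\rtimes \Delta(G))/\Delta(G)_1
\]
by Lemma~\ref{L:semidir}. One point needs checking: a simple HC bimodule is finite dimensional. This holds because, by Theorem~\ref{T:simplesFrobeniusneighbourhood}, the simple $K$-modules are finite dimensional, and finite generation over $U_0$ then gives finite-dimensionality in any case. The equivalence therefore matches simple objects with simple $K$-modules.

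Next, feed in the data required by Theorem~\ref{T:simplesFrobeniusneighbourhood}. The simple $(G\times G)_1$-modules are the simple $U_0(\g\oplus\g)$-modules, and these are $L_0(\lambda)\boxtimes L_0(\mu)$ for $\lambda,\mu\in X(T)/pX(T)\cong\Lambda_0$. This uses the standard fact that simple modules over a tensor product of finite-dimensional algebras over an algebraically closed field are the outer tensor products of simples. For each such module, choose $\lambda,\mu\in X_1(T)$ representing the classes, which is possible by the proof of Lemma~\ref{L:liftingtoG}. Then $L(\lambda)\boxtimes L(\mu)$ is a simple $G\times G$-module lifting $L_0(\lambda)\boxtimes L_0(\mu)$, by Curtis' theorem and \cite[II.3.15]{JanRAGS}. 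The simple $\Delta(G)\cong G$-modules are the $L(\gamma)$ for $\gamma\in X(T)_+$. Theorem~\ref{T:simplesFrobeniusneighbourhood} then says that the modules $(L(\lambda)\boxtimes L(\mu))|_K\otimes L(\gamma)^F$ form a complete irredundant list of simple $K$-modules.

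Finally, translate back through the equivalence of Lemma~\ref{L:monoidalequivalence}. The module $(L(\lambda)\boxtimes L(\mu))|_K$ becomes the bimodule $L(\lambda)\boxtimes L(\mu)$ with the restricted $U_0(\g\oplus\g)$-action and the diagonal $G$-action. The module $L(\gamma)^F$ becomes $L(\gamma)^F$ with trivial $U_0(\g\oplus\g)$-action. Because the equivalence is monoidal, the tensor product is preserved, which yields exactly the stated list.

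The main obstacle is the bookkeeping in identifying HC bimodules with $K$-modules. The bimodule convention uses the antipode on the right factor, so the right action really is the second $\g$-factor, and the strong equivariance condition (HC2) corresponds exactly to triviality of the antidiagonal $\Delta(G)_1$ in Lemma~\ref{L:semidir}. I would verify this carefully before invoking Theorem~\ref{T:simplesFrobeniusneighbourhood}. I would also check that the theorem's hypotheses, that $H$ is closed and connected in a standard reductive group, hold for $\Delta(G)\subseteq G\times G$; they do.
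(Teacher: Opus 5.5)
Your proposal is correct and follows the paper's proof. The paper likewise identifies $\HC_0^G U(\g)$ with $(U_0(\g\oplus\g),\Delta(G))\lmod_{\fg}$, passes to modules over the Frobenius neighbourhood $K$ of $\Delta(G)$ in $G\times G$ via Lemma~\ref{L:monoidalequivalence}, and reads off the simples from Theorem~\ref{T:simplesFrobeniusneighbourhood}. It also uses, as you do later, that finite generation over the finite-dimensional algebra $U_0(\g\oplus\g)$ is equivalent to finite-dimensionality; it is not automatic, as your first paragraph phrases it. Your extra bookkeeping (choosing representatives in $X_1(T)$ so that $L(\lambda)\boxtimes L(\mu)$ lifts $L_0(\lambda)\boxtimes L_0(\mu)$, and matching (HC2) with triviality of the twisted copy of $\Delta(G)_1$) simply makes explicit what the paper leaves implicit.
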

\begin{proof}
    Let $K := F_G^{-1}(\Delta(G)^{(1)}) \subseteq G \times G$, the Frobenius neighbourhood of the diagonal subgroup. The category of finitely generated, strongly equivariant modules $(U_0(\g \oplus \g), \Delta(G))\lmod_{\fg}$ is nothing but $\HC_0^G U(\g)$, and so the current theorem follows directly from Theorem~\ref{T:simplesFrobeniusneighbourhood} using the monoidal equivalence proven in Lemma~\ref{L:monoidalequivalence}. Note that the latter induces an equivalence between finite-dimensional modules and finitely generated modules.
\end{proof}

\subsection{Classification of simple Harish-Chandra bimodules}

In this final section we put together the results of the previous subsections to establish Theorem~\ref{T:MainThm}.

Let $G$ be a connected reductive algebraic group scheme over $\k$, and assume the standard hypotheses (H1), (H2), (H3) from Section~\ref{ss:reductivegroupsandLiealgebras}.

If $M$ is a simple module in $\HC^G U(\g)$ then Lemma~\ref{L:suportsemisimple} implies that $\Supp_p(M)$ is a semisimple $G$-orbit in $(\g^*)^{(1)}$. Let $\O$ be a semisimple $G$-orbit in $\g^*$ such that $\Supp_p(M)=\O^{(1)}$. Writing $\HC_\O^G U(\g)$ in place of $\HC_{\O^{(1)}}^G U(\g)$ for ease of notation, we thus obtain that each simple module in $\HC^G U(\g)$ is a simple module in $\HC^G_{\O} U(\g)$ for some unique semisimple $G$-orbit $\O$ (and simple modules in $\HC^G_{\O} U(\g)$ clearly remain simple in $\HC^G U(\g)$). Thus, to classify all simple modules in $\HC^G U(\g)$ it suffices to classify those in $\HC^G_\O U(\g)$ as $\O$ runs over the semisimple $G$-orbits in $\g^*$. 

For the remainder of this subsection, fix a semisimple $G$-orbit $\O$ in $\g^*$ and fix $\chi\in \g^*$. From the previous subsections, we obtain the following chain of equivalences of categories.
$$\HC^G_\O U(\g)\xleftrightarrow{\eqref{eq:HCequivalencewithequivariance+support}} U_\O(\g)\lmod_{\fg}^{G} \xleftrightarrow{\tiny\mbox{Cor. } \ref{c:locofUXmod} } U_\chi(\g)\lmod_{\fg}^{G^{F(\chi)}}\xleftrightarrow{\tiny\mbox{Thm.}~\ref{thm: GF(X) equivariance}} \HC_\chi^{G^\chi}U(\g)\xleftrightarrow{\tiny\mbox{Cor. }\ref{c:HCInd} \ \&\ \ref{c:HCtwist}} \HC_0^{G^\chi} U(\g^\chi).$$ To classify the simple modules in $\HC^G_\O U(\g)$ it thus suffices to classify the simple modules in $\HC_0^{G^\chi} U(\g^\chi)$.

Note that $G^\chi$ is a Levi subgroup of $G$ and thus satisfies the standard hypotheses. Theorem~\ref{T:MainThm} then follows from Theorem~\ref{T:classrestsimples} applied to $G^\chi$.

The final assertion on central characters follows from Lemma~\ref{L:cclemma}.

\end{document}